\newtheorem{teo}{Theorem}[section]
\newtheorem{lema}[teo]{Lemma}
\newtheorem{rem}{Remark}[section]
\begin{document}

\title[Quasilinear heat equation with localized reaction]{Grow-up for a quasilinear heat equation with a localized reaction in higher dimensions}

\author{R. Ferreira and A. de Pablo}

\address{Ra\'{u}l Ferreira
\hfill\break\indent  Departamento de Matem\'{a}ticas,
\hfill\break\indent U.~Complutense de Madrid,
\hfill\break\indent
 28040 Madrid, Spain.
\hfill\break\indent  e-mail: {\tt raul$_-$ferreira@ucm.es}}

\address{Arturo de Pablo
\hfill\break\indent  Departamento de Matem\'{a}ticas,
\hfill\break\indent U. Carlos III de Madrid,
\hfill\break\indent
 28911 Legan\'{e}s, Spain.
\hfill\break\indent  e-mail: {\tt arturop@math.uc3m.es}}

\maketitle

\

\begin{abstract}
We study the behaviour of nonnegative  solutions to the quasilinear heat equation  with a reaction localized in a ball
$$
u_t=\Delta u^m+a(x)u^p,
$$
for $m>0$, $0<p\le\max\{1,m\}$, $a(x)=\mathds{1}_{B_L}(x)$, $0<L<\infty$ and $N\ge2$. We study when  solutions, which are global in time, are bounded or unbounded. In particular we show that the precise value of the length $L$ plays a crucial role in the critical case $p=m$ for  $N\ge3$. We also obtain the asymptotic
behaviour of unbounded solutions and prove that the grow-up rate is different in most of the cases to the one obtained when $L=\infty$.

Keywords: Quasilinear diffusion equations, localized reaction, grow-up.
\end{abstract}


\section{Introduction}

\label{sect-introduction} \setcounter{equation}{0}

We consider non-negative solutions to the following problem
\begin{equation}\label{eq.principal}
\left\{
\begin{array}{ll}
u_t(x,t)=\Delta u^m(x,t)+a(x) u^p(x,t),\qquad & (x,t)\in \mathbb R^N\times\mathbb R^+,\\
u(x,0)=u_0(x),
\end{array}\right.
\end{equation}
with $m,\,p>0$, $N\ge2$. We refer to~\cite{FerreiradePablo} for the case $N=1$. The initial datum is a continuous, nonnegative and nontrivial  function $u_0\in L^1(\mathbb{R}^N)\cap L^\infty(\mathbb{R}^N)$. The reaction coefficient is the characteristic function of the ball of radius $L$, $a(x)=\mathds{1}_{B_L}(x)$.

The existence of a solution to problem~\eqref{eq.principal}, local in time, can be easily achieved. To avoid uniqueness issues when $p<1$ we assume that $u_0$ is strictly positive in $\overline{B_{L}}$.  If $T$ is the maximal time of existence of the unique solution then the solution is bounded in $\mathbb{R}^N\times[0,t]$ for every $t<T$.

Problems like \eqref{eq.principal} are studied mainly when $p>1$ and in the context of blow-up, i.e. when  $T$ is finite, and in that case
\begin{equation}\label{bup}
\|u(\cdot,t)\|_\infty\to \infty \quad \mbox{as } t\to T.
\end{equation}
The case with global reaction, $L=\infty$, has been described
by Fujita in the semilinear case $m=1$ in the the seminal work \cite{Fujita}. It is proved in that paper that there exist two exponents, the \emph{global existence exponent} $p_0=1$ and the so called \emph{Fujita exponent} $p_F=1+2/N$, such that for $0<p< p_0$ all the solutions are globally defined in time, for $p_0<p< p_F$ all the solutions blow up, whereas for $p>p_F$ there exist both, global solutions and blowing-up solutions. The limit cases $p=p_0$ and $p=p_F$ belong, for this problem and respectively, to the global existence range and blow-up range, see also \cite{Hayakawa}. From this result several extensions have been investigated in the subsequent years, for all values of $m>0$, or with different diffusion operators and reactions; we mention the monographs \cite{GalaktionovKurdyumovMikhailovSamarski,QuittnerSouplet} for equation \eqref{eq.principal} with $L=\infty$.

In the presence of a localized reaction, $L<\infty$, problem \eqref{eq.principal} has been studied, again  in the context of blow-up, in \cite{BaiZhouZheng,FerreiradePabloVazquez,Liang,Pinsky}.
\begin{teo}\label{teo-exponents} The global existence exponent and the Fujita exponent for problem~\eqref{eq.principal} are
  \begin{equation}\label{exp-p0}
  \begin{array}{ll}
p_0=\max\{1,\dfrac{m+1}2\},\quad p_F=m+1,&\quad\text{if } N=1, \\ [3mm]
p_0=p_F=\max\{1,m\},&\quad\text{if } N\ge2.
\end{array}
\end{equation}
\end{teo}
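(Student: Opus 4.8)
The plan is to prove the two values for $N\ge2$ entirely through the comparison principle, the one-dimensional formulae being those of \cite{FerreiradePablo}. The point is that a time-independent supersolution of \eqref{eq.principal} is any $U\ge0$ for which $V:=U^m$ satisfies $\Delta V+\mathds{1}_{B_L}V^{q}\le0$ with $q:=p/m$, so that the whole dichotomy is encoded in the sign of $q-1$, i.e.\ of $p-m$: when $q<1$ the localized elliptic problem is sublinear and admits barriers of every height, while when $q>1$ it is superlinear and only small barriers survive. I expect Theorem~\ref{teo-exponents} to follow once these barriers, together with a blowing-up subsolution, are produced; the main obstacle is the explicit matching of an interior profile on $B_L$ to the harmonic exterior tail across $\partial B_L$, the step that degenerates logarithmically at $N=2$.

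First, to show that every solution is global when $p<\max\{1,m\}$ (hence $p_0\ge\max\{1,m\}$) I would split according to $p$. If $p\le1$ the spatially homogeneous $f(t)$ solving $f'=f^p$, $f(0)=\|u_0\|_\infty$, is already a supersolution, since $\Delta f^m=0$ and $\mathds{1}_{B_L}\le1$, and it is global because the reaction is sublinear; this disposes of the case $m\le1$. If $m>1$ and $1\le p<m$, so that $q<1$, I would instead solve the sublinear Dirichlet problem $-\Delta V=V^{q}$ on $B_L$ with constant boundary value $c=\|u_0\|_\infty^{m}$ and continue it outside by the decaying harmonic tail $V(r)=c+B\,(L/r)^{N-2}$, choosing $B>0$ to match the normal derivative, so that $U=V^{1/m}$ is a genuine $C^1$ stationary supersolution with $U\ge\|u_0\|_\infty$. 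Because the interior problem is sublinear it is solvable for every $c$, and $U$ is bounded, whence $u\le U$ is bounded and global.

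Next, for $p>\max\{1,m\}$ I would establish simultaneously blow-up for large data and global existence for small data, which forces $p_0=p_F=\max\{1,m\}$ with an empty Fujita window. Since here $p>\max\{1,m\}$ entails $p>m$, equivalently $q>1$, the reaction is superlinear relative to the diffusion; restricting to a ball $B_\rho\subset B_L$ with homogeneous Dirichlet data, where $a\equiv1$, any subsolution there extended by zero is a subsolution of \eqref{eq.principal}, and a standard first-eigenfunction argument (for $m\le1$) or an energy/concavity argument (for $m>1$) produces a solution that blows up for sufficiently large data, giving $p_0\le\max\{1,m\}$. For the small-data side I would reuse the matched barrier: for $q>1$ the superlinear Dirichlet problem $-\Delta V=V^{q}$ on $B_L$ with small boundary value $c=\|u_0\|_\infty^{m}$ is still solvable, the smallness of $c$ ensuring the reaction is dominated, and the resulting bounded $U$ keeps small solutions global for every $p>\max\{1,m\}$; hence no range of universal blow-up remains and $p_F=p_0=\max\{1,m\}$.

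The dimensional restriction $N\ge2$ enters precisely through the exterior tail: the bounded positive harmonic profile $r^{2-N}$ used to absorb the flux leaving $B_L$ exists for $N\ge3$, is only logarithmic for $N=2$, and has no integrable analogue for $N=1$, which is exactly why the one-dimensional exponents are the larger $\max\{1,(m+1)/2\}$ and $m+1$. Accordingly the delicate points I anticipate are the interior-exterior matching and, above all, the borderline construction at $N=2$, where the tail must be replaced by a $\log$-corrected barrier and the derivative jump handled with a cut-off at large radius; I also expect the comparison principle itself to require care in the non-Lipschitz range $p<1$, which is why $u_0$ is taken strictly positive on $\overline{B_L}$. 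The equality $p=m$ is deliberately left out here, since its resolution hinges on the principal eigenvalue of $B_L$ and therefore on the length $L$.
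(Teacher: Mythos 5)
Your strategy hinges entirely on matched stationary barriers: an interior profile of $-\Delta V=V^{q}$ in $B_L$ glued across $\partial B_L$ to a bounded superharmonic tail. This works for $N\ge3$ (it is precisely the construction of Theorem~\ref{teo-stat} in Section~\ref{sect-stationary}), but it \emph{provably fails} for $N=2$, which is part of the statement you must prove. Indeed, the paper shows that problem \eqref{eq.principal} admits no positive stationary solutions when $N\le2$, and the obstruction is not technical: for $w=V$ to be a supersolution across the interface one needs a concave kink, $w'(L^+)\le w'(L^-)<0$, while superharmonicity outside $B_L$ in the plane gives $(rw')'\le0$, hence $rw'(r)\le Lw'(L^+)<0$ for all $r>L$ and therefore $w(r)\le w(L)+Lw'(L^+)\log(r/L)\to-\infty$; positivity is lost at a finite radius. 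Your proposed patch --- a ``$\log$-corrected barrier with a cut-off at large radius'' --- cannot work either, because truncation creates a convex (subharmonic) kink exactly where the supersolution property must hold, and a maximum of supersolutions is not a supersolution. Consequently two steps of your plan are unproved precisely when $N=2$: (i) global existence of \emph{all} solutions for $1<p<m$ (with $m>1$), where the flat supersolution blows up in finite time and no stationary barrier exists; and (ii) small-data global existence for $p>\max\{1,m\}$, which you need in order to conclude $p_F=p_0$. Both genuinely require time-dependent comparison functions or integral arguments in dimension two (for $m=1$ a Duhamel/heat-kernel bootstrap; for $m\neq1$ suitable growing or decaying self-similar supersolutions, as in the quoted references). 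This is consistent with Theorem~\ref{teo-GUP}: in $N=2$ every global solution with $p\le m$ is unbounded, so no bounded barrier of any kind can exist there.

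For comparison, the paper does not reprove Theorem~\ref{teo-exponents}: it quotes it from \cite{BaiZhouZheng,FerreiradePabloVazquez,Liang,Pinsky} and only supplies the case left open by those references, namely $m\le1$, $N\ge2$, in Lemma~\ref{lem-exp}. The two ingredients there --- the flat supersolution $Me^{t}$ for $p\le1$, and Kaplan's eigenfunction method in $B_L$ for blow-up when $p>1$, which works exactly because $m\le1<p$ --- coincide with the corresponding parts of your plan, and your assignment of the concavity (Levine--Sacks) argument to the case $m>1$ is also the right move, since Jensen's inequality runs the wrong way for Kaplan when $m>1$. A last, minor point in your $N\ge3$ matching: you prescribe boundary value $c$ for the interior problem while the tail forces the value $c+B$ at $r=L$ with $B$ determined by the interior flux; this circularity is harmless but should be resolved as in the proof of Theorem~\ref{teo-stat}, by shooting in the amplitude $A=V(0)$ and reading off the limit $c_1(A)$ at infinity, rather than by fixing the boundary value.
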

We remark that the fast diffusion case $m<1$  is not covered by those references, but the result is trivial in that range, see Lemma~\ref{lem-exp}.

Our purpose in this work is to study the behaviour of global solutions in the lower interval $p\le p_0$, and to characterize wether they are bounded or not. In the last case we have that~\eqref{bup} holds with $T=\infty$, a phenomenon that is called {\it grow-up}. When $L=\infty$ every solution with $p\le p_0=1$ has grow-up, see \cite{dePabloVazquez}. As Theorems~\ref{teo-global-p0} and~\ref{teo-GUP} below show, the situation where $L<\infty$ is much more involved. This is in particular true in the  case $p=m$ and $N\ge3$, where there exists a critical length $L^*=L^*(N)$, see \eqref{stationary-bessel},   delimiting two very different behaviours.

In the critical exponent case $p=p_0$ the solutions can be global or not, contrary to what happens when $L=\infty$. In fact $p=p_0$ lies in the global existence side if $N=1$, see \cite{FerreiradePabloVazquez,BaiZhouZheng} or $N\ge2$ and $m\le1$, see Lemma~\ref{lem-exp}. We emphasize that the case $p=m>1$, $N\ge3$, is  not clear in the bibliography, see \cite[Theorem 1.1]{Liang} where the  author asserts that the solution blows up for every $L>0$, which is not true when $L$ is small.
\begin{teo}
  \label{teo-global-p0} Let $p=p_0$. The solution to problem~\eqref{eq.principal} is always globally defined in time if $p_0=1$, while if $p_0=m>1$ it is global if and only if  $0<L\le L^*$, $N\ge3$.
\end{teo}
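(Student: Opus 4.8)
The plan is to treat the two regimes $p_0=1$ and $p_0=m>1$ separately, the first being elementary and the second resting on the stationary/spectral analysis that defines $L^*$. When $p_0=1$ (equivalently $m\le1$) the reaction $a(x)u$ is at most linear, so I would simply exhibit the spatially constant supersolution $\bar u(t)=\|u_0\|_\infty e^{t}$: since $\Delta\bar u^m=0$ and $a(x)\bar u\le\bar u=\bar u'$, it is a supersolution of \eqref{eq.principal} on every finite interval, and comparison gives $u\le\|u_0\|_\infty e^{t}<\infty$ for all $t<\infty$, whence $T=\infty$. This is essentially the content of Lemma~\ref{lem-exp}. From now on assume $p_0=m>1$ and $N\ge3$, and set $w=u^m$, so that stationary solutions solve $\Delta w+\mathds{1}_{B_L}w=0$.

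Writing $\nu=(N-2)/2$, the radial solution regular at the origin is $w=r^{-\nu}J_\nu(r)$ inside $B_L$ and a decaying harmonic $w=c\,r^{2-N}$ outside; matching the logarithmic derivative at $r=L$ reduces, via the recurrence $J_\nu'=J_{\nu-1}-\tfrac{\nu}{r}J_\nu$, to the condition $J_{\nu-1}(L)=0$, which is precisely \eqref{stationary-bessel}, so that $L^*$ is the first zero $j_{\nu-1,1}$ of $J_{\nu-1}$. For the sufficiency ($L\le L^*$) I would build from these pieces a bounded positive stationary supersolution: take $\bar w=A\,r^{-\nu}J_\nu(r)$ on $B_L$ and $\bar w=B\,r^{2-N}$ outside, matched continuously (both constants are positive, since $J_\nu(L)>0$ for $L\le L^*=j_{\nu-1,1}<j_{\nu,1}$). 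The jump of the normal derivative across $\partial B_L$ equals $-\bar w(L)\,J_{\nu-1}(L)/J_\nu(L)\le0$ exactly when $0<L\le L^*$, so the distributional Laplacian carries a nonpositive surface term and $\bar w$ is a genuine weak supersolution. By linearity $c\bar w$ is again one for every $c>0$; choosing $c$ large (using that $\bar w\sim B\,r^{2-N}$ is not integrable and hence dominates the tail of any $u_0\in L^1\cap L^\infty$) yields $\bar U=(c\bar w)^{1/m}\ge u_0$, and comparison bounds $u$ by $\bar U$ for all time, so the solution is global.

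For the necessity ($L>L^*$) I would run a Kaplan-type eigenfunction argument. The same Bessel matching, now with a positive spectral parameter, provides for $L>L^*$ a principal eigenpair $(\lambda_1,\phi_1)$ with $\lambda_1>0$ and $\phi_1>0$ bounded, integrable and exponentially decaying ($\phi_1=r^{-\nu}K_\nu(\sqrt{\lambda_1}\,r)$ outside $B_L$), solving $\Delta\phi_1+\mathds{1}_{B_L}\phi_1=\lambda_1\phi_1$; as $L\downarrow L^*$ one has $\lambda_1\downarrow0$, recovering the borderline $r^{2-N}$ profile. Setting $Y(t)=\int u\,\phi_1\,dx$ and testing the equation against $\phi_1$ (boundary terms vanish by the exponential decay of $\phi_1$ and the finite speed of propagation of the porous medium part, $m>1$) gives
\[
Y'(t)=\int u^m\bigl(\Delta\phi_1+\mathds{1}_{B_L}\phi_1\bigr)\,dx=\lambda_1\int u^m\phi_1\,dx\ge\lambda_1\Bigl(\textstyle\int\phi_1\,dx\Bigr)^{1-m}Y(t)^m,
\]
the last step being Jensen's inequality for the convex function $s\mapsto s^m$ against the finite measure $\phi_1\,dx$. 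Since $m>1$ and $Y(0)=\int u_0\phi_1>0$, this differential inequality forces $Y$ to become infinite in finite time, so $u$ cannot be global.

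The main obstacle is the spectral step: one must prove that the Bessel matching admits no decaying positive eigenfunction with $\lambda_1>0$ when $L<L^*$ and exactly one when $L>L^*$, with $\lambda_1(L)$ continuous and $\lambda_1(L^*)=0$; equivalently, $L^*$ is the threshold at which $B_L$ begins to support a bound state of $-\Delta-\mathds{1}_{B_L}$. Establishing this monotone dependence on $L$, through the interlacing of the zeros of $J_{\nu-1}$ and $J_\nu$ and the small-argument asymptotics of $K_\nu$, is the crux, and it is precisely here that the exact value of $L$ matters. A secondary technical point is the comparison with general $u_0\in L^1\cap L^\infty$ in the sufficiency part, which is handled by the finite propagation of the exterior pure diffusion together with the slow decay of $\bar w$.
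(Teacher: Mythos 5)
Your sufficiency argument ($L\le L^*$) has a genuine gap at the comparison step. Your supersolution has outer part $B\,r^{2-N}$, so $\bar U=(c\bar w)^{1/m}\sim |x|^{(2-N)/m}\to 0$ as $|x|\to\infty$, and the justification you give for $\bar U\ge u_0$ --- that $r^{2-N}$ is non-integrable while $u_0\in L^1$ --- is a non sequitur: integrability of $u_0$ gives no pointwise decay whatsoever. For instance $u_0=\sum_k \mathds{1}_{B_{2^{-k}}(x_k)}$ with $|x_k|=k$ lies in $L^1\cap L^\infty$ but equals $1$ at points arbitrarily far out, so no multiple of a decaying profile dominates it (and waiting does not help: the resulting solution can decay arbitrarily slowly in $|x|$). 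This is not a removable technicality; it is exactly the dichotomy the paper's proof (Theorem~\ref{teo-bup-p=m}) is built around. For $L<L^*$ the paper compares with the exact $C^1$ stationary solutions of Theorem~\ref{teo-stat}, whose outer part in \eqref{stat-w} is $c_1+c_2 r^{2-N}$ with $c_1=AF(L)>0$; since these tend to the positive constant $c_1$ at infinity and $c_1\to\infty$ with $A$, they dominate $\|u_0\|_\infty$ for $A$ large. Your kinked construction forces $c_1=0$, which is precisely the borderline $L=L^*$ profile; accordingly, what you have actually proved is globality only for initial data satisfying the decay condition \eqref{extrainfinity} --- the extra hypothesis the paper needs only at $L=L^*$. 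The fix for $L<L^*$ is to restore the constant: use the paper's stationary solution with $c_1>0$ (equivalently, add $c_1$ to your outer harmonic part and rematch), and reserve the decaying profile, together with \eqref{extrainfinity}, for $L=L^*$.

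Your necessity argument ($L>L^*$) is, in contrast, a genuinely different and essentially sound route. The paper proceeds by comparison from below with a Dirichlet problem in a ball that admits no stationary solution, then combines a Lyapunov functional with the Levine--Sacks concavity inequality to force finite-time blow-up; you instead run a Kaplan argument against the bound state $\phi_1$ of $-\Delta-\mathds{1}_{B_L}$ on all of $\mathbb{R}^N$, and the ODE step is correct: $Y'\ge cY^m$ with $m>1$ blows up in finite time from \emph{any} $Y(0)>0$, so no largeness of $u_0$ is needed. Moreover, the spectral fact you flag as the unproven crux is not missing from the paper: it is exactly Theorem~\ref{prop-lambda0} and equation \eqref{lambda-L} (the $C^1$ matching of $J_\nu$ inside with $K_\nu$ outside has a root $\lambda_0(L)\in(0,1)$ precisely for $L>L^*$, with $L^*$ given by \eqref{stationary-bessel}), so you could simply cite it. Two smaller points: the integration by parts against $\phi_1$ should be justified (standard here, since $u(\cdot,t)\in L^1\cap L^\infty$ up to the maximal time and $\phi_1$ decays exponentially), and your restriction to $N\ge3$ omits the case $N=2$, $m>1$, which the theorem also covers; your argument extends there because in dimension two the well $-\Delta-\mathds{1}_{B_L}$ has a bound state for every $L>0$, as shown in the proof of Theorem~\ref{prop-lambda0}.
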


We now study the global solutions in the range $p\le p_0$. The behaviour depends on the dimension and also  on $p$, $m$ and $L$.
Comparison with stationary solutions or unbounded explicit solutions will be useful, see Section~\ref{sect-special}.
We include here the case $N=1$ studied in~\cite{FerreiradePablo} for completeness.

\begin{teo}\label{teo-GUP} Let $p\le p_0$ and let $u$ be a global solution to problem~\eqref{eq.principal}.
\begin{itemize}
  \item If $N=1$ then $u$ is unbounded.
  \item If $N=2$ and the initial value is large then $u$ is unbounded. If in addition $p\le m$ then $u$ is always unbounded.
  \item Let $N\ge3$.
  \begin{itemize}
    \item If  $p< m$ then $u$ is bounded.
    \item If $p=m$ then $u$ is bounded when $L\le L^*$ (assuming also \eqref{extrainfinity} if $L=L^*$), and  $u$ is  unbounded when $L>L^*$.
    \item If $p>m$ then $u$ can be bounded or unbounded depending on the initial value.
  \end{itemize}
\end{itemize}
\end{teo}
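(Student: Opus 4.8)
The plan is to reduce every assertion to the comparison principle, confronting the solution $u$ either with a bounded stationary supersolution (to obtain boundedness) or with one of the unbounded special subsolutions of Section~\ref{sect-special} (to obtain grow-up). The case $N=1$ is quoted from \cite{FerreiradePablo}, so I concentrate on $N\ge2$. Throughout I work with $v=u^m$, for which the elliptic part reads $\Delta v+\mathds{1}_{B_L}\,v^{p/m}$; a stationary profile is radial, solves $\Delta v+v^{p/m}=0$ inside $B_L$, is harmonic outside, and must be matched in a $C^1$ fashion across $r=L$.

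For the boundedness assertions ($N\ge3$) I would produce, for data below the candidate, a bounded stationary supersolution. When $p<m$ the interior equation is sublinear ($p/m<1$): starting from any bounded radial stationary solution $V$ and multiplying by $\lambda>1$ gives $\Delta(\lambda V)+(\lambda V)^{p/m}=(\lambda^{p/m}-\lambda)V^{p/m}\le0$, so $\lambda V$ is a supersolution for every $\lambda>1$; since in $N\ge3$ the exterior harmonic tail $r^{2-N}$ decays and carries the outgoing flux for every $L$, such $V$ exists for all $L$, and scaling lets it dominate $u_0$, giving boundedness for all $L$. When $p=m$ the interior equation is the Helmholtz equation $\Delta v+v=0$, whose regular radial solution is the Bessel profile $v(r)=r^{1-N/2}J_{N/2-1}(r)$; matching its logarithmic derivative at $r=L$ to that of a decaying harmonic tail $B_0+B_1 r^{2-N}$ is exactly the solvability condition \eqref{stationary-bessel}, which admits a positive, strictly decreasing solution precisely when $L\le L^*$. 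Linearity then lets me rescale this profile above $u_0$, proving boundedness for $L\le L^*$; at $L=L^*$ the matching forces $B_0=0$, so $u_0$ must share the decay rate $r^{2-N}$ of the profile, which is condition \eqref{extrainfinity}. Finally, when $p>m$ (hence $m<1$ and $m<p\le1$) the interior equation is superlinear and subcritical, so a small positive ground-state profile exists and serves as a supersolution for sufficiently small data, yielding the bounded alternative.

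For the grow-up assertions I would argue by the failure of the matching together with an unbounded subsolution. In the critical case $p=m$, $L>L^*$, the Bessel condition \eqref{stationary-bessel} can no longer be met by any positive bounded profile, equivalently the principal eigenvalue of $-\Delta-\mathds{1}_{B_L}$ on $\RR^N$ has become negative, so the linearised flow has a growing mode $e^{\lambda t}\psi$ with $\psi>0$ concentrated and $\lambda>0$; from $\psi$ I would build a genuine unbounded subsolution of \eqref{eq.principal} and conclude grow-up by comparison. In dimension $N=2$ the exterior harmonic functions are only logarithmic, so the divergence-theorem balance $2\pi L\,v'(L^-)=-\int_{B_L}v^{p/m}<0$ is incompatible with a positive bounded tail; hence no bounded stationary solution exists and, for $p\le m$, every global solution must grow up, which I would certify with the explicit unbounded subsolution of Section~\ref{sect-special}. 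For general $p$ the same subsolution fits beneath any sufficiently large datum, giving the large-data statement, and the remaining $N\ge3$, $p>m$ dichotomy combines the small-data supersolution above with this large-data subsolution.

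The main obstacle I anticipate is the grow-up direction rather than boundedness. Producing the unbounded subsolutions with the correct spatial matching across $r=L$ and the correct temporal growth, and verifying the differential inequality through the interface where $a(x)$ jumps, is delicate in the degenerate/singular quasilinear setting where $v=u^m$ is only $C^1$; and ruling out convergence to a (non-existent) bounded steady state in the borderline regimes $p=m,\ L=L^*$ and $N=2,\ p\le m$ requires care with the energy/Lyapunov structure and with compactness for \eqref{eq.principal}. I expect the precise identification of $L^*$ through \eqref{stationary-bessel} and the sign of the principal eigenvalue of $-\Delta-\mathds{1}_{B_L}$ to be the technical heart of the argument.
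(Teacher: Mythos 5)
The boundedness half of your plan is essentially the paper's own argument: for $N\ge3$ the paper (Theorem~\ref{teo-stat}) builds radial stationary solutions by matching an interior profile with a harmonic tail $c_1+c_2r^{2-N}$, identifies $L^*$ through \eqref{stationary-bessel}, and compares from above; your scaling remark in the sublinear case, the use of linearity when $p=m$, and your reading of \eqref{extrainfinity} at $L=L^*$ all agree with that. One inaccuracy there: for $p>m$ you invoke subcriticality to produce the small stationary supersolution, but the paper's point is precisely that Cauchy-problem stationary states with positive limit $k$ at infinity exist for \emph{all} $p>m$ independently of $p_S$ (only the Dirichlet problem in a ball requires $p<p_S$); this matters because the window $p_S\le p\le 1$ is nonempty when $m\le\frac{N-2}{N+2}$.

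The grow-up half has genuine gaps. First, ``no bounded stationary solution exists, hence every global solution grows up'' is not a proof: you must exclude solutions that stay bounded without stabilizing. The paper closes this for $N=2$, $p<m$ by comparison with Dirichlet problems in balls $B_R$, whose solutions increase in time to the Dirichlet steady state $W_R$ with $W_R(0)=A(R)\to\infty$ by \eqref{eq.A-R}, contradicting any uniform bound on $u$; and for $p=m$ by the Lyapunov functional \eqref{eq.lyapunov} (a monotone-in-time solution of the Dirichlet problem must converge to a steady state, of which there is none but zero, which is ruled out), supplemented by the Levine--Sacks concavity inequality \eqref{eq.J'} to get finite-time blow-up when $m>1$. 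Second, the ``explicit unbounded subsolution of Section~\ref{sect-special}'' with which you plan to certify grow-up in $N=2$ does not exist there for $p\le m$, $m\ne1$: that section only provides exponential solutions for $p=m=1$ (your spectral picture is correct exactly in that case, Theorem~\ref{prop-lambda0}) and self-similar solutions of the \emph{reaction-free} equation, used later only for outer rates. Third, the growing mode $e^{\lambda t}\psi$ of the linearization is an exact (sub)solution only when $m=p=1$; for $p=m\ne1$ the paper instead uses a nonlinear separated-variables profile ($\psi'=\lambda\psi^m$ with $(\varphi^m)''+\frac{N-1}{r}(\varphi^m)'+a\varphi^m=\lambda\varphi$, as in Lemma~\ref{lem-tasa-p=m}), and for $m<p\le1$, where no exact separation is possible, the key device you are missing is the paper's subsolution $\underline z=\psi(t)\varphi(x)$ with $\varphi$ a Dirichlet steady state for an auxiliary exponent $q$, $m<q<p_S$, $q\le1$, and $\psi'=(\psi^q-\psi^m)/\varphi^{1-q}(0)$: the terms that obstruct separation are absorbed by the sign of $\psi^q-\psi^m$ together with $\varphi\le\varphi(0)$. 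That construction is also what handles the supercritical window $p_S\le p\le1$ (in two steps, first forcing $u\ge1$ on $B_L$). There is also a comparison issue you should flag: your candidate subsolutions are positive on all of $\mathbb{R}^N$ with exponentially small tails, so they cannot be placed below $u$ at any fixed time when $u_0$ is compactly supported (Gaussian or compactly supported tails); the paper avoids this by working on balls with zero boundary data and by the penetration property. Without these devices the unboundedness claims for $N=2$ and for $p=m$, $L>L^*$ remain unproved in your outline.
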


\begin{figure}[h]
\hspace*{-1.2cm}
\includegraphics[width=14cm]{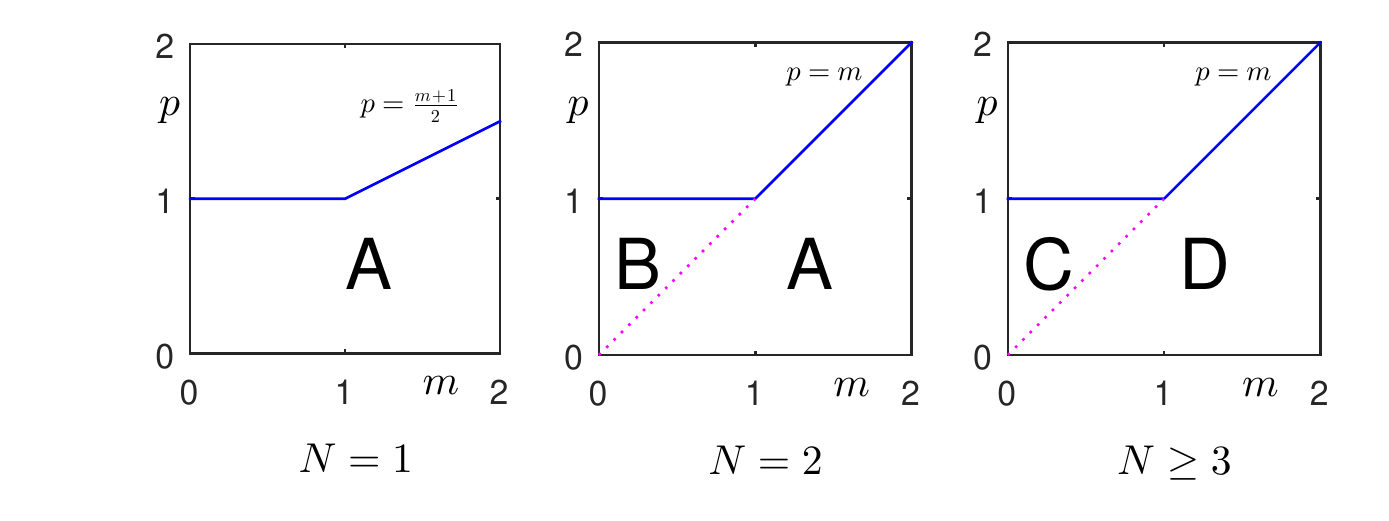}
\caption{Grow-up regions below $p_0$ in dimensions $N=1$, $N=2$ and $N\ge3$ resp.: $A$, all solutions are unbounded; $B$, there exist unbounded solutions; $C$, there exist bounded and unbounded solutions; $D$, all solutions are bounded.}
\label{fig.gup-vs-bdd}
\end{figure}

Each border line belongs to the corresponding subset below it, except for the line $p=m\le1$ when $N\ge3$, where the boundedness of the solutions depends on the length $L$. In the critical case $p=m$ and $L=L^*$ when $N\ge3$ we must impose an extra condition on the behaviour of the initial value at infinity, namely
\begin{equation}
  \label{extrainfinity}
  \limsup_{|x|\to\infty}|x|^{\frac{N-2}m}u_0(x)<\infty.
\end{equation}
Under this condition the solution is always bounded.

We do not know if all the solutions grow up in the parameter range $m<p\le1$ when $N=2$, that is if $B$ is actually $A$ or $C$ in Fig.~\ref{fig.gup-vs-bdd}. We  notice that in the linear reaction case $p=1$ with superfast diffusion $0<m<m_*\equiv \frac{N-2}N$, $N\ge3$, besides unbounded solutions there also exist solutions that vanish identically in finite time, see Remark~\ref{rem-vanish}.

For the unbounded solutions to problem \eqref{eq.principal} we also characterize the grow-up set. We assume for simplicity that the initial datum is radial. This is not a restriction  if $p\le m$ with $N=2$ or if $p=m<1$ with $L>L^*$ when $N\ge3$, since then every solution is unbounded and we may use comparison with a smaller initial datum satisfying those properties. When $m<p\le 1$ we obtain the grow-up set only for radial unbounded solutions with limit infinity at some pint.
\begin{teo}\label{teo-sets}
  Let $u$ be a global unbounded solution to problem~\eqref{eq.principal}. If $m<p\le 1$ assume also that $u$ is radial satisfying $\lim\limits_{t\to\infty}u(x,t)=\infty$ for $|x|=R$ and some $R\ge0$. Then $\lim\limits_{t\to\infty}u(x,t)=\infty$ uniformly in compact sets.
\end{teo}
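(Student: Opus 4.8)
The plan is to exploit that, since the reaction $a(x)u^p$ is nonnegative, every solution $u$ of~\eqref{eq.principal} is a weak supersolution of the pure porous medium equation $w_t=\Delta w^m$ on all of $\mathbb{R}^N\times(0,\infty)$; the jump of $a$ across $|x|=L$ is harmless here because we only ever discard the nonnegative source, so the supersolution inequality $u_t-\Delta u^m=a\,u^p\ge0$ holds globally in the weak sense. The grow-up hypothesis provides a \emph{genuine} time-limit $u(\cdot,t)\to\infty$ on a sphere $\{|x|=R\}$, and the whole argument consists in transporting this information both inwards and outwards by comparison with Dirichlet problems for $w_t=\Delta w^m$, whose steady states are explicit. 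I would first arrange that such an $R$ can be taken \emph{strictly positive}: in the cases $p\le m$, where grow-up is universal and $u$ may be taken radial, comparison from below with the explicit unbounded solutions of Section~\ref{sect-special} (which are bounded below by a positive multiple on a full ball $\overline{B_{R_0}}$) yields $u(\cdot,t)\to\infty$ uniformly on $\overline{B_{R_0}}$, hence the full limit on the sphere $\{|x|=R_0\}$ with $R_0>0$; in the range $m<p\le1$ the existence of $R$ is the standing hypothesis.

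Fix $A>0$. Because the limit on $\{|x|=R\}$ is a true limit and not merely along a time sequence, there is a time $T_A$ with $u(\cdot,t)\ge A$ on $\{|x|=R\}$ for all $t\ge T_A$; this is the only place where the full-limit hypothesis, rather than mere unboundedness, is used, and it is what lets us freeze a single launching time. For the inward propagation I compare $u$ on the ball $B_R$ with the solution $w$ of $w_t=\Delta w^m$ in $B_R$ carrying Dirichlet datum $A$ on $\partial B_R$ for $t\ge T_A$ and zero datum at $t=T_A$. Since $u$ is a supersolution with $u\ge A=w$ on the lateral boundary and $u\ge0=w$ at $t=T_A$, the comparison principle gives $w\le u$; and $w$ increases monotonically to the unique bounded steady state of $\Delta W^m=0$ in $B_R$ with boundary value $A$, which is the constant $A$. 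Hence $\liminf_{t\to\infty}u\ge A$ uniformly on $\overline{B_R}$, and letting $A\to\infty$ gives $u\to\infty$ uniformly there.

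The outward propagation is identical in spirit. Given $\bar R>R$ I compare $u$ on the annulus $\{R<|x|<\bar R\}$ with the porous medium solution carrying Dirichlet data $A$ on $\{|x|=R\}$ and $0$ on $\{|x|=\bar R\}$, and vanishing initial datum at $t=T_A$; this subsolution increases to the explicit radial steady state $W_A$ of $\Delta W^m=0$ equal to $A$ at $r=R$ and to $0$ at $r=\bar R$, whose $m$-th power is $A^m$ times the normalized harmonic profile of the annulus (logarithmic if $N=2$, of order $r^{2-N}$ if $N\ge3$). On a given compact $K$ I first pick $\bar R$ with $K\subset B_{\bar R}$ and then $A$ so large that $W_A\ge M$ on $K\setminus B_R$; comparison then forces $u\ge M$ on $K\setminus B_R$ for all large $t$. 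Together with the inward bound on $K\cap\overline{B_R}$ this yields $\lim_{t\to\infty}u(x,t)=\infty$ uniformly on $K$, which is the assertion.

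The routine ingredients, namely weak comparison for the porous medium equation, solvability of these Dirichlet problems, and monotone convergence of zero-initial-datum solutions with time-independent boundary data to the steady state, are classical. The step I expect to be the genuine obstacle, and where I would spend most care, is securing the full limit on a sphere of \emph{positive} radius. On a single point ($R=0$) the Dirichlet comparison is void, since in dimension $N\ge2$ a point has zero capacity; so one must either use that the explicit subsolutions of Section~\ref{sect-special} are already large on a whole ball (as above, this is automatic in the universal grow-up cases and delivers $R_0>0$ directly), or, in the assumed range $m<p\le1$ with a putative $R=0$, upgrade point grow-up to grow-up on a small positive-radius sphere by means of the intrinsic local regularity of the porous-medium supersolution together with the uniform-in-time lower bound $u(0,t)\ge 2A$. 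Once a single positive-radius sphere is known to carry the full limit, the spreading mechanism above is robust and dimension-independent.
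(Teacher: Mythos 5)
Your machinery for a sphere of \emph{positive} radius is correct and is essentially the paper's own: the outward step coincides with Theorem~\ref{teo-0-eps} (pure-diffusion comparison in an annulus, whose explicit harmonic-profile steady state transports a fixed fraction of $K$ outward), and your inward step is a slightly cleaner variant of Theorem~\ref{teo.global.p<1}, since discarding the reaction and using the constant steady state $A$ in $B_R$ suffices. The genuine gap is exactly where you flag it: the case $R=0$, which the statement explicitly allows when $m<p\le1$, and your proposed repair does not close it. ``Intrinsic local regularity'' cannot upgrade $\lim_{t\to\infty}u(0,t)=\infty$ to a full limit on a sphere of \emph{time-independent} positive radius, because all continuity and Harnack estimates for these equations are intrinsic: their spatial scale degenerates as the solution grows. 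Concretely, normalizing $v=M^{-1}u(\rho\,\cdot\,,t_0+M^{1-m}\rho^2\,\cdot\,)$ with $M$ the local sup near $(0,t_0)$ (which tends to infinity under the hypothesis), the rescaled equation reads $v_s=\Delta v^m+M^{p-m}\rho^2\,a\,v^p$, and since $p>m$ the reaction coefficient blows up unless $\rho\lesssim M^{(m-p)/2}\to0$; so the radius on which one can guarantee $u\ge A$ shrinks to zero as $t\to\infty$, and no single positive-radius sphere carrying the full limit is produced. The weak Harnack inequality for supersolutions does not rescue this either: it bounds the infimum on a ball from below by an \emph{integral} average, not by the value at one point, and, as you yourself observe, a point has zero capacity when $N\ge2$.

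What the paper actually does at $R=0$ (Theorem~\ref{teo.globalfast}) is a different, non-perturbative device: intersection comparison (Sturmian nonincrease of the number of sign changes) between the radial solution $u(r,t)$ and a fixed compactly supported radial stationary solution $w$ of $\Delta w^m+w^p=0$ in a small ball $B_R$ --- such $w$ exist precisely because $p>m$. One chooses $R$ small so that the initial intersection number is $N(0)=1$; no new intersections can enter through $r=R$ since $u(R,t)>0=w(R)$; and at the first time $t_0$ with $u(0,t_0)=w(0)$ and $u_t(0,t_0)>0$, the expansion of $z=u^m(\cdot,t_0)-w^m$ (with $z(0)=z'(0)=0$, $z''(0)>0$) shows an intersection is \emph{lost} at the origin, whence $N(t)=0$ and $u$ lies above $w$ on all of $[0,R]$ thereafter. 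From there the separated-variables subsolution of Theorem~\ref{teo.m-menor-1} gives grow-up in $B_R$, and the outward propagation finishes. Without this (or an equivalent replacement), your proof covers the cases $p\le m$ and the hypothesis with $R>0$, but not the full statement.
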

We do not know if in the upper range $m<p\le 1$ there exist unbounded solutions with $\liminf\limits_{t\to\infty}\|u(\cdot,t)\|_\infty<\infty$.

Once the existence of global unbounded solutions is characterized, the main question to deal with  is to determine the grow-up rate, that is, the speed at which they go to infinity.
An easy upper estimate of the grow-up rate for $0<p\le1$ is given by the solutions of the ODE $U'(t)=U^p(t).$
This gives,
\begin{equation}
  \label{flat0}
  u(x,t)\le\begin{cases}  ct^{\frac1{1-p}},& \quad \text{if } p<1, \\
  ce^t,& \quad \text{if } p=1.
  \end{cases}
\end{equation}
We call this the \emph{natural rate}.

If $L=\infty$ and $0<p< p_0=1$ the grow-up rate is indeed given by the natural rate, that is,
$$
u(x,t)\sim t^{\frac1{1-p}},
$$
where by the symbol $f\sim g$ we mean $0<c_1\le f/g\le c_2<\infty$. If $p=1$ we can perform a change of variables to eliminate the reaction term as in \cite{FerreiradePablo}, getting in this way an exponential grow-up,
$$
u(x,t)\sim\begin{cases}
t^{-\frac N2}e^t,& \quad \text{if } m=1, \\
e^{\frac{2t}{N(m-1)+2}},& \quad \text{if } m>1, \\
e^t,& \quad \text{if } m<1.
\end{cases}
$$
We write this in weak form as
\begin{equation}\label{log-p=1}
  \lim\limits_{t\to\infty}\frac{\log u(x,t)}t=\min\{1,\frac2{N(m-1)+2}\}.
\end{equation}
We remark that this change is not possible if $L<\infty$.

We prove in this paper that for the case of a localized reaction estimates \eqref{flat0} are not always sharp, that is,
the grow-up rate for problem~\eqref{eq.principal} is in most of the cases strictly less than the natural grow-up rate. Let us see this phenomenon heuristically. If we perform the  rescaling, for $p\ne 1$,
\begin{equation}\label{rescale}
v(\xi,\tau)=t^{-\alpha} u(x,t),\qquad \xi=xt^{-\beta},\;\tau=\log t,
\end{equation}
with $\alpha(m-1)-2\beta+1=0$, we have that $v$ is a solution to the equation
\begin{equation}\label{rescaled-eq}
v_\tau=\Delta v^m+\beta\xi\nabla v+b(\xi,\tau)v^p-\alpha v,\qquad
\end{equation}
where the reaction coefficient becomes
\begin{equation}\label{rescaled-reaction}
b(\xi,\tau)=e^{\gamma\tau}\mathds{1}_{\{|\xi|<Le^{-\beta\tau}\}}(\xi),\qquad \gamma=\alpha(p-1)+1\ge 0.
\end{equation}
Now choose  the natural rate $\alpha=1/(1-p)$. This implies $\beta=(m-p)\alpha/2$ and $\gamma=0$. Therefore, when $p>m$ we have $\beta<0$ and the reaction coefficient tends to 1 in the whole $\mathbb{R}^N$, so the rescaled solution $v$ is supposed to stabilize to the constant $ (1-p)^{\frac1{1-p}}$, at least below the critical Sobolev exponent $p_S=m(N+2)/(N-2)$ . The grow-up rate must then be the natural one. The proof of this fact is our first result. The importance of the critical Sobolev exponent $p_S$  is well known in the characterization of the blow-up rates in superlinear problems, see for instance \cite{QuittnerSouplet}.

\begin{teo}
  \label{teo-rates-N>1}
Let $m<p< \min\{p_S,1\}$.
If $u$ is a solution to problem~\eqref{eq.principal} with global grow-up then as $t\to\infty$,
\begin{equation*}
    \label{rates11}
    u(x,t)\sim t^{\frac1{1-p}}
\end{equation*}
for every $|x|<L$.
\end{teo}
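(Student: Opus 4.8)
The upper estimate is the easy half, so I would dispose of it first: the spatially constant function $\bar u(t)=U(t)$ solving $U'=U^p$ with $U(0)=\|u_0\|_\infty$ is a supersolution of \eqref{eq.principal}, since $\Delta\bar u^m=0$ and $\mathds{1}_{B_L}\le 1$ give $\bar u_t=U^p\ge \Delta\bar u^m+\mathds{1}_{B_L}\bar u^p$. By comparison $u\le U\sim c_2\,t^{1/(1-p)}$, which is just \eqref{flat0}. All the content is therefore in the matching lower bound $u(x,t)\ge c_1\,t^{1/(1-p)}$ on $\{|x|<L\}$.

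For the lower bound I would follow the heuristic of \eqref{rescale}--\eqref{rescaled-reaction} literally, taking $\alpha=1/(1-p)$, $\beta=(m-p)\alpha/2<0$ and $\gamma=0$. Then $v=t^{-\alpha}u$ solves \eqref{rescaled-eq} with $b(\xi,\tau)=\mathds{1}_{\{|\xi|<Le^{|\beta|\tau}\}}\to 1$ on every ball, with an inward-confining drift $\beta\xi\cdot\nabla v=-|\beta|\xi\cdot\nabla v$ and a sublinear reaction $v^p-\alpha v$ (both of which push $v$ away from $0$). The goal is that $v$ stay trapped between two positive constants on the expanding set $\{|\xi|<Le^{|\beta|\tau}\}$, which is exactly the image of $\{|x|<L\}$; the supersolution above already furnishes $v\le c_2$, so what remains is a uniform positive lower bound.

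The concrete tool I would use is a compactly supported self-similar subsolution in the original variables, $\underline u(x,t)=A(t)\psi(x)$ with $A'=A^p$ (hence $A\sim((1-p)t)^{1/(1-p)}$) and $\psi$ a fixed radial profile supported in $B_L$ and positive at the prescribed point. Using $\mathrm{supp}\,\psi\subset B_L$, the subsolution inequality collapses to
\[
A^{p-m}\bigl(\psi-\psi^p\bigr)\le \Delta(\psi^m).
\]
Since $p<1$ one has $\psi-\psi^p\le 0$, and since $p>m$ the factor $A^{p-m}\to\infty$, so the inequality holds in the bulk as soon as $A$ is large. Two degeneracies must be tamed: I keep $\max\psi<1$ strictly, so that $\psi-\psi^p$ stays bounded away from $0$ in the interior, and I let $\psi^m$ touch down quadratically at the edge of its support, so that $\Delta(\psi^m)\ge0$ in that collar and the inequality becomes trivial where $\psi\to0$; choosing $\psi$ so that $\psi^m\in C^2$ legitimizes the computation. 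The catch is that the inequality forces $A$ (hence $\underline u$) to be \emph{large} at the comparison time, whereas the initial ordering $\underline u\le u$ naively wants $\underline u$ small. This tension is resolved precisely by the hypothesis of global grow-up: since $u(\cdot,t_1)\to\infty$ uniformly on $\overline{B_{L'}}$ for each $L'<L$, I may pick $t_1$ so large that $u(\cdot,t_1)\ge A(t_1)\psi$ with $A(t_1)$ already in the admissible range. Comparison on $[t_1,\infty)$ then gives $u(x,t)\ge A(t)\psi(x)\ge c_1\,t^{1/(1-p)}$ at every fixed $|x|<L$ (choosing $\mathrm{supp}\,\psi$ to contain that point).

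The main obstacle is this lower bound, and within it the reconciliation of the "$A$ large'' requirement of the differential inequality with the initial comparison, together with the free-boundary degeneracy at $\partial(\mathrm{supp}\,\psi)$; both are handled only by exploiting global grow-up and the quadratic touch-down of $\psi^m$. The role of the subcriticality $p<p_S$ is clearest in the alternative route suggested by the heuristic, namely proving the full stabilization $v\to V=(1-p)^{1/(1-p)}$ through the weighted Lyapunov functional attached to \eqref{rescaled-eq}: there $p<p_S$ is exactly the Sobolev threshold that keeps the energy coercive and the rescaled orbit relatively compact, so that its $\omega$-limit set consists of equilibria of the autonomous limit equation and reduces to the single constant $V$. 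For the two-sided estimate asserted in the statement the direct sub/supersolution argument above suffices, whereas identifying the exact limit constant is where $p<p_S$ becomes essential.
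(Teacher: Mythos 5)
Your proof is correct, and your upper bound is exactly the paper's (comparison with the flat ODE solution, \eqref{flat}); but your lower bound follows a genuinely different route. The paper (Lemma~\ref{lem.bola}) compares with the separated-variables subsolution $\underline z(x,t)=\psi(t)\varphi(x)$ built in the proof of Theorem~\ref{teo.m-menor-1}, where $\varphi$ is a positive \emph{stationary} solution of the Dirichlet problem $\Delta\varphi^m+\varphi^p=0$ in $B_L$, $\varphi=0$ on $\partial B_L$, and $\psi$ solves $\psi'=(\psi^p-\psi^m)/\varphi^{1-p}(0)$, $\psi(0)>1$; the hypothesis $p<p_S$ is used there exactly once, via Theorem~\ref{estacionarias-Dirichlet}, to guarantee that such a $\varphi$ exists when $N\ge3$. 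You instead take $\underline u=A(t)\psi(x)$ with $A'=A^p$ and a hand-crafted compactly supported profile, replacing the stationary equation by the sign analysis of $A^{p-m}(\psi-\psi^p)\le\Delta(\psi^m)$: the left side is nonpositive and, where $\psi$ stays in a compact subset of $(0,1)$, is pushed to $-\infty$ by $A^{p-m}\to\infty$ (this is where $p>m$ enters), while the touchdown collar with $\Delta(\psi^m)\ge0$ absorbs the degeneracy at the edge of the support; both proofs then resolve the same tension (``$A$ large'' versus initial ordering) by invoking grow-up that is uniform on compact subsets of $B_L$, i.e.\ Theorem~\ref{teo-sets}. What your route buys: it is elementary and self-contained (no stationary-solution theory), and --- as you correctly note --- it never uses $p<p_S$, so it would give the sharp lower bound $t^{1/(1-p)}$ also for $p_S\le p<1$ (a nonempty range when $m<\frac{N-2}{N+2}$, $N\ge3$), precisely where the paper concedes, after Lemma~\ref{lem.bola}, that its method yields only the presumably non-sharp rate $t^{1/(1-q)}$, $q<p_S$. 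What the paper's route buys: given the Section~\ref{sect-special} machinery its proof is two lines, and the same subsolution handles $p=1$ (exponential rate) in the same stroke. Two caveats, shared with and at the same level of rigor as the paper: one must check that $\underline u$ is a weak subsolution across the free boundary of its support (your $C^1$ quadratic touchdown of $\psi^m$ does this, as it creates no negative singular part in $\Delta\underline u^m$), and the comparison principle is applied with the non-Lipschitz reaction $s^p$, $p<1$, which is legitimate here because $u$ is strictly positive on $\overline{B_L}$ from the comparison time onwards.
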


If $p=m$, the parameters $\beta$ and $\gamma$ are zero and the rescaled solution $v$ is supposed to stabilize to a (nonconstant) positive stationary profile. This suggest again that the grow-up rate of $u$ is the natural one. But the fact that $x=\xi$ in that case gives that the rate must hold in the whole space.
\begin{teo}
Let $p=m<1$ and let $u$ be a global unbounded solution of \eqref{eq.principal}. Then
$$
u(x,t)\sim t^{\frac1{1-m}}
$$
uniformly in compact sets of $\mathbb{R}^N$.
\end{teo}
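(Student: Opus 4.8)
The strategy is to trap $u$ between two constant multiples of $t^{1/(1-m)}$. Write $\alpha=\frac1{1-m}$ and record the algebraic identity $\alpha m=\alpha-1$; this is exactly the relation (corresponding to $\beta=\gamma=0$ in the rescaling \eqref{rescale} when $p=m$) that makes a separable ansatz $t^{\alpha}W(x)$ compatible with $u_t=\Delta u^m+a(x)u^m$. The upper bound is immediate: since $a\le 1$, the spatially homogeneous function $\bar u(t)=((1-m)(t+C))^{\alpha}$, which solves the ODE $\bar u'=\bar u^{\,m}$, is a supersolution of \eqref{eq.principal}, and comparison with $C$ chosen so that $\bar u(0)\ge\|u_0\|_\infty$ yields the natural rate \eqref{flat0}, namely $u(x,t)\le c\,t^{\alpha}$ for all $x$ and all large $t$, uniformly in space.

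The core is the lower bound. For $R$ large I would solve the Dirichlet problem
\[
\Delta W^m+a(x)W^m=\alpha W \ \text{ in } B_R,\qquad W=0 \text{ on }\partial B_R,\qquad W>0 \text{ in }B_R .
\]
In the variable $w=W^m$ this is the logistic-type equation $-\Delta w=a\,w-\alpha\,w^{1/m}$ with superlinear absorption ($1/m>1$), which admits a positive solution as soon as the principal eigenvalue of $-\Delta-a$ on $B_R$ is negative: a subsolution $\delta\varphi_1$ built on the principal eigenfunction together with a large constant supersolution then sandwich $w$. Because $u$ is a \emph{global unbounded} solution we are in the grow-up regime (so $L>L^*$ when $N\ge3$, while for $N=2$ this always holds), and there this eigenvalue is negative on all sufficiently large balls. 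Extending $W$ by zero outside $B_R$, the outward gradient of $W^m$ jumps in the favourable direction across $\partial B_R$, so $W$ is a subsolution of the stationary equation on all of $\mathbb R^N$; using $\alpha m=\alpha-1$ one checks that $\underline u(x,t)=(t-s)_+^{\alpha}\,W(x)$ is then a subsolution of \eqref{eq.principal} for every $s\ge0$.

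To place this subsolution below $u$ I would invoke Theorem~\ref{teo-sets}: since $p=m$ lies outside the range $m<p\le1$, no radiality is required, and the unboundedness of $u$ gives $u(x,t)\to\infty$ uniformly on $\overline{B_R}$. Taking $s=t_1-1$, the uniform grow-up lets me pick $t_1$ so large that $u(\cdot,t_1)\ge\max_{\overline{B_R}}W\ge W=\underline u(\cdot,t_1)$ on $B_R$ (trivially $\underline u=0\le u$ outside). As the fast diffusion makes $u(\cdot,t_1)>0$ everywhere, comparison yields $u(x,t)\ge(t-t_1+1)^{\alpha}W(x)$ for $t\ge t_1$, whence $\liminf_{t\to\infty}t^{-\alpha}u(x,t)\ge W(x)$ on $B_R$. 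Since $R$ is arbitrary and $W>0$ inside $B_R$, for any compact $K$ we obtain $t^{-\alpha}u(x,t)\ge\frac12\min_K W>0$ for $x\in K$ and $t$ large; combined with the upper bound this gives $u(x,t)\sim t^{1/(1-m)}$ uniformly on compact sets.

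The main obstacle is the existence of the positive profile $W$: one must verify that the grow-up hypothesis forces the principal eigenvalue of $-\Delta-a$ on large balls to be negative, which is precisely what the critical length $L^*$ encodes (and why the conclusion is restricted to unbounded solutions). The remaining technical points — the subsolution property of the zero-extended $W$ across $\partial B_R$ and the comparison principle for the non-Lipschitz reaction $u^m$ with $m<1$ — are comparatively routine, the latter being safe because the comparison is started at a time where $u$ is already strictly positive.
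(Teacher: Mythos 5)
Your proposal is correct, and its skeleton coincides with the paper's: the upper bound is the natural‑rate ODE bound \eqref{flat}, and the lower bound comes from a separable subsolution (a power of $t$ times a profile supported in a large ball, solving the stationary eigenvalue‑type equation), placed under $u$ at a large time by means of the uniform grow‑up of Theorem~\ref{teo-sets} and comparison. Where you genuinely diverge is in the construction of the profile. The paper's Lemma~\ref{lem-tasa-p=m} takes $w=\psi_\lambda(t)\varphi_\lambda(|x|)$ with $\psi'=\lambda\psi^m$ and builds $\varphi_\lambda$ by shooting in the radial ODE $(\varphi^m)''+\frac{N-1}r(\varphi^m)'+a\varphi^m=\lambda\varphi$, $\varphi(0)=1$: for $0<\lambda<\lambda^*$ the profile has a first zero $R_\lambda$, and $R_\lambda\to\infty$ as $\lambda\to\lambda^*$, the endpoints being controlled by the stationary case $\lambda=0$ (which crosses the axis precisely when $L>L^*$, by Section~\ref{sect-stationary}) and by the explicit solution $\varphi\equiv1$ in $B_L$ at $\lambda=1$. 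You instead fix $\lambda=\alpha=\frac1{1-m}$ --- legitimate, since replacing $W$ by $cW$ rescales $\lambda$ by $c^{m-1}$, so the normalization is free --- and produce $W$ on a given large ball by solving the logistic‑type Dirichlet problem $-\Delta w=aw-\alpha w^{1/m}$, $w=W^m$, with a principal‑eigenfunction subsolution and a constant supersolution, under the spectral condition $\lambda_1(-\Delta-a;B_R)<0$. For large $R$ this condition is equivalent to $-\Delta-\mathds{1}_{B_L}$ having negative spectrum, i.e.\ to $L>L^*$ when $N\ge3$ and to $L>0$ when $N=2$, which is exactly what Subsection~\ref{expo-linear} establishes ($\overline L^*=L^*$, bound states exist iff $L>L^*$); so the step you single out as the main obstacle is already resolved by results in the paper. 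What each approach buys: yours makes the role of $L^*$ spectrally transparent, avoids radial shooting, and gives the profile on every sufficiently large ball by standard elliptic monotone iteration; the paper's is more elementary at this point (pure ODE continuity arguments, no spectral theory on unbounded domains) and yields the whole family $R_\lambda\uparrow\infty$ at once. Two caveats, both shared with the paper rather than gaps of yours alone: the comparison principle for the non‑Lipschitz reaction $u^m$ (which you correctly note is safe because comparison is started where $u$ is strictly positive), and the borderline case $N\ge3$, $L=L^*$, where boundedness is only proved under \eqref{extrainfinity}, so a hypothetical unbounded solution there is covered by neither argument, since in that case no positive compactly supported profile exists on any ball.
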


When $p<m$ (which implies $N=2$ in order to have grow-up) the reaction coefficient disappears in the limit and the function $v$ must tend to zero. This gives that the rate should be strictly smaller than the natural one. Following  what is done in \cite{FerreiradePablo} to treat the one dimensional problem, we look at the case when the reaction coefficient tends to a Dirac delta at the origin, which for $N=2$ implies $\gamma=2\beta$. This means $\alpha=0$, which suggests a logarithmic grow-up rate. Thanks to Duhamel's formula we prove that this is indeed what happens in the case of linear diffusion $m=1$.

\begin{teo}
  \label{teo-rates-p<1}
Assume $N=2$, $m=1$, $0<p<1$ and let $u$ be a solution to problem~\eqref{eq.principal}. Then
\begin{equation}
  \label{rates-p<1}
 u(x,t)\sim  (\log t)^{\frac1{1-p}}
\end{equation}
uniformly  in compact sets.
\end{teo}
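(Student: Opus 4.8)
The plan is to exploit that for $m=1$ equation \eqref{eq.principal} is the linear heat equation with source $\mathds{1}_{B_L}u^p$, so Duhamel's formula
\[
u(x,t)=\int_{\mathbb{R}^2}G(x-y,t)u_0(y)\,dy+\int_0^t\!\!\int_{B_L}G(x-y,t-s)\,u^p(y,s)\,dy\,ds
\]
holds with $G(z,t)=(4\pi t)^{-1}e^{-|z|^2/4t}$. The homogeneous term is bounded by $\|u_0\|_1/(4\pi t)\to0$ and is harmless. Writing $r=\frac1{1-p}$ and using the algebraic identity $pr=r-1$, the argument reduces to the convolution kernel $w(x,\tau)=\int_{B_L}G(x-y,\tau)\,dy$, for which $0\le e^{-|z|^2/4\tau}\le1$ and $|B_L|=\pi L^2$ give $w(x,\tau)\le\min\{1,L^2/(4\tau)\}$ for every $x$, and $w(x,\tau)\ge c\,\tau^{-1}$ uniformly for $x$ in a fixed compact set once $\tau\ge\tau_0$. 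Everything then rests on the elementary asymptotics, for fixed $q\ge0$,
\[
\int_{1}^{t-\tau_0}\frac{(\log s)^{q}}{t-s}\,ds=(\log t)^{q+1}\bigl(1+o(1)\bigr)\qquad(t\to\infty),
\]
proved by splitting at $s=t^{1-\varepsilon}$ (on $[1,t^{1-\varepsilon}]$ one has $(\log s)^q\le(\log t)^q$ and $\int ds/(t-s)\sim(1-\varepsilon)\log t$, while the piece near $s=t$ contributes only $O(\varepsilon(\log t)^{q+1})$). Choosing $q=pr$ gives $q+1=r$, which is exactly the self-consistency that forces the exponent $r$.

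For the \emph{upper bound} set $m(t)=\max_{\overline{B_L}}u(\cdot,t)$. Bounding $u^p(y,s)\le m(s)^p$ on $B_L$ and using $w\le\min\{1,L^2/(4\tau)\}$ yields the scalar inequality $m(t)\le C_0+\int_0^t\min\{1,L^2/(4(t-s))\}\,m(s)^p\,ds$. I would then verify that $\Phi(t)=A(\log(e+t))^{r}$ is a supersolution: by the asymptotics above, inserting $\Phi$ produces $C_0+\tfrac{L^2}4A^p(\log t)^{r}(1+o(1))$ plus a lower-order diagonal term $O(A^p(\log t)^{r-1})$, so the inequality holds for all large $t$ as soon as $A^{1-p}>L^2/4$; here the gain $A/A^p=A^{1-p}\to\infty$ (since $p<1$) is what lets one large constant dominate. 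A continuity argument—if $m$ first touches $\Phi$ at $t_*$, monotonicity of $\phi\mapsto\phi^p$ forces $m(t_*)<\Phi(t_*)$, a contradiction—gives $m\le\Phi$, and feeding this back into Duhamel yields $u(x,t)\le\Phi(t)$ for all $x$.

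For the \emph{lower bound} work with $\mu(t)=\min_{\overline{B_L}}u(\cdot,t)>0$. Dropping the homogeneous term, restricting the time integral to $s\le t-\tau_0$ and using $w\ge c/\tau$ gives $\mu(t)\ge c\int_{T_1}^{t-\tau_0}\mu(s)^p/(t-s)\,ds$ for $x\in\overline{B_L}$. I would show $\psi(t)=a(\log t)^{r}$ is a strict subsolution for small $a$: with $q=pr=r-1$ the right-hand side equals $ca^p(\log t)^{r}(1+o(1))\ge\psi(t)$ once $a^{1-p}\le c$. Comparison $\mu\ge\psi$ follows from a continuity argument that respects the \emph{nonlocal memory}: since the integrand $\mu^p$ is nonnegative and monotone, one discards $[0,T_1]$, keeps $\mu\ge\psi$ on $[T_1,t)$ by induction, and at a first touching point $t_*$ the strict subsolution property gives $\mu(t_*)>\psi(t_*)$, a contradiction; the base case on a bounded initial interval is arranged by shrinking $a$, using only positivity and continuity of $u$ (grow-up, from Theorem~\ref{teo-GUP} and Theorem~\ref{teo-sets}, is what makes the bound nontrivial and propagates it to all $x$ in a compact $K\supset\overline{B_L}$, where $w(x,\tau)\ge c_K/\tau$ still holds). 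Together with the upper bound this gives $u\sim(\log t)^{r}$ uniformly on compact sets.

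The main obstacle is the lower bound, and precisely pinning down the \emph{exact} exponent $r$ rather than merely any $b<r$: a naive iteration $b\mapsto pb+1$ reaches $r$ only in the limit, with a priori deteriorating constants and time-thresholds. The subsolution comparison circumvents this, but its two delicate points are (i) the sharp evaluation of the convolution integral near its singularity $s=t$, which fixes the leading constant and hence the choice of $a$, and (ii) the correct handling of the nonlocal memory so that the touching-point/continuity argument is legitimate for an integral rather than differential inequality.
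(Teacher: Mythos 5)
Your proposal is correct in substance and rests on the same foundation as the paper's own proof — Duhamel's formula plus the asymptotic evaluation of the convolution of the localized heat kernel against $(\log s)^q$, with the self-consistency $q=pr\Rightarrow q+1=r$ — but the mechanism by which you extract the sharp exponent is genuinely different. The paper iterates: starting from the natural rate $g_1(t)=\nu t^{1/(1-p)}$ it produces bounds $g_k(t)=c_kt^{\delta_k}(\log t)^{\sigma_k}$ with $\delta_{k+1}=p\delta_k$, $\sigma_{k+1}=p\sigma_k+1$, $c_{k+1}=cL^2c_k^p$, and reaches $(\log t)^{1/(1-p)}$ only in the limit $k\to\infty$; this is legitimate because the constants $c_k$ and the time thresholds converge rather than deteriorate, so the worry you raise about naive iteration does not actually materialize for this scheme, though your method avoids having to check it. Your one-shot sub/supersolution comparison for the scalar Volterra-type inequality in $m(t)=\max_{\overline{B_L}}u$ and $\mu(t)=\min_{\overline{B_L}}u$ is a clean alternative: it hits the exponent $r=\frac1{1-p}$ directly, at the price of the two delicate points you correctly identify (strictness at the first touching time, and the base interval arranged by taking $A$ large, resp. $a$ small, which is what makes the memory of the integral inequality harmless). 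You also diverge in the propagation to compact sets: the paper builds an explicit subsolution $h(t)\,(1-|x|/R)_+^\gamma$ of the heat equation exterior to $B_{L/2}$ with boundary data $h(t)=c(\log t)^{\frac1{1-p}}$, whereas you re-use Duhamel with the uniform kernel bound $\int_{B_L}G(x-y,\tau)\,dy\ge c_K/\tau$ for $x\in K$, $\tau\ge\tau_K$; both work, and yours is the more direct of the two.

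One slip you should fix: your sketch of the key asymptotic $\int_1^{t-\tau_0}(\log s)^q(t-s)^{-1}\,ds=(\log t)^{q+1}(1+o(1))$ attributes the main contribution to the wrong region. On $[1,t^{1-\varepsilon}]$ one has $\int_1^{t^{1-\varepsilon}}\frac{ds}{t-s}=\log\frac{t-1}{t-t^{1-\varepsilon}}=o(1)$ (you appear to have computed $\int ds/s$ rather than $\int ds/(t-s)$), so that piece is negligible; the term $(\log t)^{q+1}$ comes from $s$ close to $t$, e.g. $s\in[t/2,t-\tau_0]$, where $(\log s)^q=(\log t)^q(1+o(1))$ and $\int_{t/2}^{t-\tau_0}\frac{ds}{t-s}=\log t\,(1+o(1))$. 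The asymptotic itself is true (the paper proves its version of it by L'H\^opital), so nothing in your architecture is affected, but the splitting as written would not prove it.
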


The case $p < m\ne 1$ in dimension $N = 2$ will be the subject of a separate
work.

Finally when $p=1\ge m$ we cannot perform the previous rescaling and we must try an exponential type change of variables. The argument therefore  suggests an exponential grow-up, $\log\|u(\cdot,t)\|_\infty\sim \lambda t$. The main point is that the natural rate $\lambda=1$ is obtained only if $m<1$, whereas when $m=1$ the rate is smaller and it depends on the length $L$.
\begin{teo}
  \label{teo-rates-N>1}
Let  $p=1$ and $\frac{N-2}{N+2}<m\le1$. If $u$ is a solution to problem~\eqref{eq.principal} with global grow-up then  for $t\to\infty$,
\begin{enumerate}
\item if  $m<1$
  \begin{equation*}
    \label{rates21}
      u(x,t)\sim e^t
  \end{equation*}
  for every $|x|<L$;
\item if $m=1$ and $L>L^*$ there exists a  function $\lambda_0=\lambda_0(L)\in(0,1)$ such that
\begin{equation}
  \label{rates-p=1}
  \lim\limits_{t\to\infty}\frac{\log u(\cdot,t)}t=\lambda_0
\end{equation}
uniformly in compact sets.
\end{enumerate}
\end{teo}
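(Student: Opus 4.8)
The plan is to prove the two regimes by different mechanisms, treating the nonlinear case $m<1$ by comparison and the linear case $m=1$ by spectral analysis. In both cases the upper bound is the soft part: the natural rate \eqref{flat0} already gives $u(x,t)\le ce^t$ in all of $\mathbb{R}^N$, which settles the upper inequality in (1) and gives the crude bound $\limsup_{t\to\infty}t^{-1}\log u\le 1$ in (2), to be sharpened later to $\lambda_0<1$.

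For part (1) I would obtain the matching lower bound $u\gtrsim e^t$ on $B_L$ by a subsolution $\underline u(x,t)=A(t)\phi(x)$, with $\phi$ radial, equal to $1$ on $B_{\rho-\eta}$, vanishing linearly at $\partial B_\rho$ (with $B_\rho\subset B_L$), and $A(t)\sim e^t$. The structural observation is that, since $m<1$, the diffusion term $A^m\Delta\phi^m\sim e^{mt}$ is of strictly lower order than the reaction $A\phi\sim e^t$: on the flat part $\Delta\phi^m=0$ and $A'=A$ suffices, while in the transition layer the inequality $(A'-A)\phi\le A^m\Delta\phi^m$ is met by a correction $A(t)=e^t\eta(t)$ with $\eta'/\eta\le -ce^{(m-1)t}$, whose integral converges so that $\eta(t)$ decreases to a positive limit. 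The degeneracy of fast diffusion at the free boundary $\partial B_\rho$ is one obstacle here: a profile vanishing there makes $\Delta\phi^m/\phi$ unbounded and the correction non-summable; letting $\phi$ vanish linearly makes $\phi^m$ convex near $\partial B_\rho$, so that $\Delta\phi^m\ge0$ exactly where $\phi$ is small and $\Delta\phi^m/\phi$ stays bounded where it is negative. A second, more essential subtlety is that the admissible rate $c$ and the surviving amplitude $\eta_\infty$ are coupled, so the construction closes only when the starting value $A(t_0)$ is large; this is precisely where the grow-up hypothesis enters, and it is consistent with the coexistence of bounded solutions for small data when $p>m$ (Theorem~\ref{teo-GUP}). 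I would therefore start the comparison at a time $t_0$ for which, using the grow-up assumption together with Theorem~\ref{teo-sets} to propagate it to a uniform lower bound, $u(\cdot,t_0)$ is as large as needed on $\overline{B_\rho}$, and then let $\rho\uparrow L$ to conclude $u\sim e^t$ for every $|x|<L$.

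For part (2) the equation is linear, $u_t=\Delta u+\mathds{1}_{B_L}u$, and I would read the exponent off the Schr\"odinger operator $H=-\Delta-\mathds{1}_{B_L}$ on $\mathbb{R}^N$. For $L>L^*$ this operator has a simple bottom eigenvalue $-\lambda_0<0$ with $\lambda_0\in(0,1)$ (the strict bounds come from $0<\mathds{1}_{B_L}\le1$ together with positivity of the kinetic energy) and a radial, positive, $L^2$ eigenfunction $\phi$ solving $\Delta\phi+\mathds{1}_{B_L}\phi=\lambda_0\phi$; concretely $\phi$ is a Bessel $J$-profile on $B_L$ matched in $C^1$ to a decaying modified-Bessel $K$-profile outside, the matching condition defining $\lambda_0=\lambda_0(L)$ and degenerating, as $\lambda_0\to0$, to the relation \eqref{stationary-bessel} that fixes $L^*$. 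Normalising $\|\phi\|_2=1$, the exact solution $e^{\lambda_0 t}\phi$ together with the energy identity
\begin{equation*}
\frac{d}{dt}\,\tfrac12\|u\|_2^2=-\|\nabla u\|_2^2+\int_{B_L}u^2\le\lambda_0\|u\|_2^2
\end{equation*}
gives the sharp bound $\|u(\cdot,t)\|_2\le e^{\lambda_0 t}\|u_0\|_2$, while $\tfrac{d}{dt}\langle u,\phi\rangle=\langle u,\Delta\phi+\mathds{1}_{B_L}\phi\rangle=\lambda_0\langle u,\phi\rangle$ forces $\langle u(\cdot,t),\phi\rangle=e^{\lambda_0 t}\langle u_0,\phi\rangle$ with $\langle u_0,\phi\rangle>0$. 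Combined with the spectral gap between $\lambda_0$ and the remainder of the spectrum (finitely many smaller eigenvalues and the essential spectrum $[0,\infty)$), these two facts force $e^{-\lambda_0 t}u(\cdot,t)\to\langle u_0,\phi\rangle\phi$ in $L^2$.

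Finally I would upgrade this to the uniform statement \eqref{rates-p=1}: the function $w=e^{-\lambda_0 t}u$ solves a linear parabolic equation with bounded coefficients, so interior parabolic estimates turn the $L^2$ convergence into local uniform convergence $w\to\langle u_0,\phi\rangle\phi$, and since $\phi>0$ is continuous and bounded below on compacts this yields $t^{-1}\log u\to\lambda_0$ uniformly on compact sets. The main obstacle of part (2) is precisely to show that the lower eigenmodes and the continuous-spectrum contribution $e^{-tH}P_cu_0$ are $o(e^{\lambda_0 t})$, which is what pins $\lambda_0$ as the exact exponent rather than a mere upper bound.
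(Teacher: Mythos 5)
Your part (1) contains a genuine gap. The subsolution profile is chosen to ``vanish linearly at $\partial B_\rho$'', with the claim that this makes $\phi^m$ convex there; for $m<1$ this is backwards. If $\phi(r)=c(\rho-r)$ near the boundary, then $\phi^m=c^m(\rho-r)^m$ and $(\phi^m)''=c^m m(m-1)(\rho-r)^{m-2}\to-\infty$, because $m(m-1)<0$. Hence $\Delta\phi^m\to-\infty$ in the transition layer while $(A'-A)\phi\to 0$, so the required inequality $(A'-A)\phi\le A^m\Delta\phi^m$ fails precisely near $\partial B_\rho$: your proposed fix creates the obstacle it was meant to remove (the convexity claim is the porous--medium, $m>1$, intuition; fast diffusion is singular, not degenerate, where $u$ vanishes). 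The construction is repairable by making $\phi^m$, not $\phi$, vanish quadratically, e.g.\ $\phi\sim(\rho-r)^{2/m}$, which gives $\Delta\phi^m\ge 0$ near the boundary and $\Delta\phi^m/\phi$ bounded where it is negative; with that change your scheme (the correction $\eta$ with $\eta'\le -ce^{(m-1)t}\eta^m$, the need for a large starting amplitude, hence the grow-up hypothesis via Theorem~\ref{teo-sets}) closes as you describe. The paper avoids the issue altogether in Lemma~\ref{lem.bola}: it takes $\underline z=\psi(t)\varphi(x)$ where $\varphi$ is the exact stationary Dirichlet solution of $\Delta\varphi^m+\varphi=0$ in $B_L$ --- whose existence is exactly where the hypothesis $m>\frac{N-2}{N+2}$, i.e.\ $1<p_S$, enters --- and $\psi'=\psi-\psi^m$, so the diffusion term is cancelled identically instead of being estimated.

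Your part (2) is correct but takes a genuinely different route. The paper argues by pure comparison: it squeezes $u$ between the explicit solutions $e^{\lambda t}\varphi_\lambda(|x|)$ of Theorem~\ref{prop-lambda0} (Bessel $J_\nu$ inside $B_L$ matched to $K_\nu$ outside, with the $I_\nu$-component profiles serving as subsolutions), for $\lambda$ near $\lambda_0$, obtaining $c(\varepsilon)e^{(\lambda_0-\varepsilon)t}\le u\le c^{-1}(\varepsilon)e^{(\lambda_0+\varepsilon)t}$ and hence \eqref{rates-p=1}. Your spectral argument for $H=-\Delta-\mathds{1}_{B_L}$ identifies the same $\lambda_0$ (the ground state corresponds to the largest, positive-profile root of \eqref{lambda-L}, and the threshold $L^*$ matches \eqref{stationary-bessel}), and it buys more: convergence $e^{-\lambda_0 t}u(\cdot,t)\to\langle u_0,\phi\rangle\phi$ locally uniformly, a sharp asymptotic profile rather than only the logarithmic limit. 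Its price is the $L^2$ framework --- harmless here since $u_0\in L^1\cap L^\infty$ --- plus the identification of the PDE solution with the semigroup $e^{-tH}u_0$, the simplicity/positivity of the ground state, and the parabolic-regularity upgrade, all standard but in a different toolbox from the paper's elementary comparison.
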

The function $\lambda_0(L)$ is increasing in $(L^*,\infty)$ and satisfies $\lim\limits_{L\to L^*}\lambda_0(L)=0$, $\lim\limits_{L\to\infty}\lambda_0(L)=1$, see~\eqref{lambda-L}. Observe also the influence again of the Sobolev exponent $p_S$.

Let us note that for $m<p\le1$ we  have obtained the grow-up rate only inside $B_L$. Our last result involves the characterization of  the grow-up rate outside $B_L$ and prove that, under certain restrictions, it is different (smaller) from the rate inside  the ball. This is particularly outstanding in the case $p=1$, when the solution grows  outside like a power, which is  much slower than the exponential growth inside. The proof uses comparison with solutions of the pure diffusion equation of a particular self-similar form, see Section~\ref{sect-special}. The existence of such special solutions will require to consider (not too) fast diffusion, $m^*<m<1$.
\begin{teo}\label{teo.tasas.fuera}
Let $m<p\le 1$ with $m>m^*$ and $p<p_S$ if $N\ge3$. Assume that there exists $C>0$ such that for $|x|$ large
$$
u(x,0)
\le C |x|^{\frac{-2}{1-m}}
\qquad \mbox{if } p<1
$$
or
$$
u(x,0)\sim |x|^{\frac{-2}{1-m}}(\log(x)^{\frac1{1-m}} \qquad \mbox{if } p=1.
$$
Then, for every $|x|>L$ it holds
$$
u(x,t)\sim t^{\frac1{1-m}}.
$$
\end{teo}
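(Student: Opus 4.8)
The plan is to work in the exterior region $\Omega=\{|x|>L\}$, where the reaction coefficient $a$ vanishes and $u$ solves the pure fast diffusion equation $u_t=\Delta u^m$, and to trap $u$ there between a supersolution and a subsolution of separable type. Since the exponent of the announced rate is $\alpha=1/(1-m)$, the compatibility relation $\alpha(m-1)=2\beta-1$ appearing in \eqref{rescale} forces $\beta=0$: the relevant self-similar solutions carry no spatial rescaling and are therefore separable, $W(x,t)=T(t)\Phi(x)$. Substituting into $u_t=\Delta u^m$ gives $T'\Phi=T^m\Delta\Phi^m$, so that $T'=kT^m$ and $\Delta\Phi^m=k\Phi$ for some $k>0$; the ODE integrates to $T(t)\sim t^{1/(1-m)}$, while the profile equation, after the substitution $v=\Phi^m$, becomes a Lane--Emden type equation $\Delta v=kv^{1/m}$. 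A balance at infinity shows that its decaying solutions satisfy $\Phi(x)\sim|x|^{-2/(1-m)}$, exactly the decay imposed on $u_0$. All profiles used below are the special solutions of Section~\ref{sect-special}, whose solvability with this decay is what forces the restriction $m>m^*$.

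For the upper bound I would take $\overline\Phi$ to be the \emph{large} (boundary blow-up) profile, the solution of $\Delta\overline\Phi^m=k\overline\Phi$ in $\Omega$ with $\overline\Phi(x)\to\infty$ as $|x|\to L^+$ and $\overline\Phi(x)\sim|x|^{-2/(1-m)}$ at infinity (for $p=1$ a logarithmically corrected profile, matching the slower decay of $u_0$), and set $\overline W=\overline T(t)\overline\Phi(x)$ with $\overline T'=k\overline T^m$. This is an exact solution of $u_t=\Delta u^m$ in $\Omega$, hence a supersolution of the full problem there, and it dominates $u$ on the parabolic boundary of $\Omega$: at $t=0$ the hypothesis $u_0(x)\le C|x|^{-2/(1-m)}$ (resp. its two-sided analogue for $p=1$) together with the blow-up of $\overline\Phi$ near $\partial B_L$ gives $\overline W(\cdot,0)\ge u_0$, and on $\{|x|=L\}$ one has $\overline W\equiv+\infty\ge u$. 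The essential point is that this automatically absorbs the boundary trace of $u$ no matter how fast it grows, in particular the exponential interior growth when $p=1$, while at any fixed $|x|>L$ it contributes only $\overline W(x,t)\sim t^{1/(1-m)}$. Comparison then yields $u(x,t)\le C\,t^{1/(1-m)}$ for $|x|>L$.

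For the lower bound I would use the \emph{flat} profile $\underline\Phi$, the positive solution of $\Delta\underline\Phi^m=k\underline\Phi$ in $\Omega$ that remains finite on $\partial B_L$ and decays like $|x|^{-2/(1-m)}$, and set $\underline W=\underline T(t)\underline\Phi(x)$ with $\underline T\sim t^{1/(1-m)}$. The initial comparison is harmless, since one may shift time and take $\underline W(\cdot,0)$ as small as needed; the decisive input is the boundary trace. By the grow-up set result (Theorem~\ref{teo-sets}) and the interior rate established above one has $u(x,t)\gtrsim t^{1/(1-m)}$ on $\{|x|=L\}$ (the interior natural rate is at least this large because $p>m$), so after scaling the amplitude of the profile small enough, and correspondingly the constant $k$, one obtains $\underline W\le u$ on the whole parabolic boundary of $\Omega$ for large $t$. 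Comparison from below then gives $u(x,t)\ge c\,t^{1/(1-m)}$ for $|x|>L$, which together with the upper bound is the claimed equivalence.

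The main obstacle is the construction and fine analysis of the two separable profiles in Section~\ref{sect-special}: for a suitable $k>0$ one must produce both a boundary blow-up solution and a bounded solution of $\Delta\Phi^m=k\Phi$ in the exterior of $B_L$ sharing the universal decay $|x|^{-2/(1-m)}$, and it is precisely this elliptic solvability with the correct decay that singles out $m>m^*$ (and, through the interior rate feeding the boundary trace, the condition $p<p_S$). A secondary difficulty is justifying the comparison principle for the singular fast diffusion equation in the unbounded domain $\Omega$ against the $+\infty$ boundary data of $\overline W$, which I would handle by approximating $\overline\Phi$ from below by finite Dirichlet data on collars $\{|x|=L+\varepsilon\}$, comparing on truncated annuli and letting $\varepsilon\to0$ and the outer radius $\to\infty$ using the common decay at infinity. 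Finally, the case $p=1$ requires matching the logarithmic correction of $u_0$ with a correspondingly corrected profile in order to keep both bounds sharp.
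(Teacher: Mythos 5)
Your route is genuinely different from the paper's, and it is worth saying how. The paper never uses separable ($\beta=0$) solutions: it compares with the self-similar solutions of Section~\ref{sect-special} with $\alpha=1/(1-p)$ and $\beta=\alpha(p-m)/2>0$ (type I for $p<1$, type II for $p=1$). These match the \emph{interior} natural rate $t^{1/(1-p)}$ (resp.\ $e^{t}$) on the inner sphere, so they can be put above/below the known boundary trace of $u$ with finite data, while at a fixed exterior point the profile tail $f(\xi)\sim\xi^{-2/(1-m)}$ turns $t^{\alpha}f(|x|t^{\beta})$ into $|x|^{-2/(1-m)}t^{1/(1-m)}$; your inference that the exterior rate ``forces $\beta=0$'' misses this mechanism (the rate in the theorem is at fixed $x$, not a sup-norm rate). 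Note also that your profiles are \emph{not} ``the special solutions of Section~\ref{sect-special}'': the Lemma there assumes $\beta>0$ and builds profiles regular at $\xi=0$ on the whole half-line, not exterior profiles with prescribed behaviour at $|x|=L$. You would have to construct the exterior Lane--Emden profiles $\Delta\Phi^m=k\Phi$ (bounded, and boundary blow-up) yourself, e.g.\ via Keller--Osserman theory. For $p<1$ your scheme can plausibly be completed this way: the blow-up boundary datum does absorb the $t^{1/(1-p)}$ trace (compare on $\{|x|>L+\varepsilon(T)\}$ for each finite horizon $T$, using local boundedness of $u$), but your ``harmless'' initial comparison for the lower bound is not harmless: it needs $u(\cdot,t_0)\ge c|x|^{-2/(1-m)}$ at infinity, which the hypotheses do not give (they only bound $u_0$ from above) and which requires an extra argument, e.g.\ comparison with a Barenblatt solution from below.

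The genuine gap is the case $p=1$. Any positive solution of $\Delta v=kv^{1/m}$ with $1/m>1$ in $\{|x|>L\}$ satisfies the universal Keller--Osserman estimate $v(x)\le C\left(|x|-L\right)^{-2m/(1-m)}$, i.e.\ $\Phi(x)\le C|x|^{-2/(1-m)}$ at infinity, with no logarithmic factor. So the ``logarithmically corrected'' separable profile you invoke does not exist: every separable supersolution $\overline T(t)\overline\Phi(x)$ has tail $O(|x|^{-2/(1-m)})$, whereas the hypothesis for $p=1$ forces $u_0(x)\ge c|x|^{-2/(1-m)}(\log|x|)^{1/(1-m)}$, so $u_0/\overline\Phi\to\infty$ as $|x|\to\infty$ and the initial comparison $\overline W(\cdot,0)\ge u_0$ fails for every amplitude (waiting does not help, since the fat tail persists). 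This is exactly why the paper switches to type II (exponential) self-similar solutions for $p=1$: their $\delta=0$ profiles do carry the tail $\xi^{-2/(1-m)}(\log\xi)^{1/(1-m)}$, match $u_0$ at $t=0$, and still grow like $t^{1/(1-m)}$ at fixed $x$. Within your separable class the upper bound for $p=1$ cannot be obtained.
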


The paper is organized as follows: Section~\ref{sect-special} is devoted to the existence of special solution: stationary solutions, exponential unbounded solutions for the linear equation, and self-similar solutions to the pure fast diffusion equation; Section~\ref{sect-bdd-notbb} deals with the question of whether the global solutions below the global existence exponent $p_0$ are bounded or not; in Section~\ref{sect-gupset} we show that the grow-up set is $\mathbb{R}^N$ generically; finally in Section~\ref{sect-guprate} we study the rate at which the unbounded solutions tend to infinity.

\section{Special solutions}\label{sect-special}

In this Section we study three families of special solutions, namely stationary solutions, explicit unbounded solutions and self-similar solutions. We first characterize the existence of stationary solutions, both for the Cauchy problem and for the corresponding Dirichlet problem in a ball. We then study the existence of explicit unbounded solutions with exponential growth in the linear equation.  We finally construct certain type of self-similar solutions for the pure fast diffusion equation.
\subsection{Stationary solutions}\label{sect-stationary}

We show here that problem \eqref{eq.principal} admits stationary solutions for every $N\ge3$ and any $p>0$, independent of the Sobolev exponent $p_S$. We concentrate in radial solutions, $u=u(r)$, $r=|x|$. We also consider later  the corresponding Dirichlet problem in a ball. We remark that in this last case the critical Sobolev exponent does play a role.
\begin{teo}\label{teo-stat}
  Problem \eqref{eq.principal} possesses positive radial stationary solutions only if $N\ge3$. Moreover
  \begin{enumerate}
    \item If $p<m$, for any $k\ge0$ there exist a unique  stationary solution such that $\lim\limits_{r\to\infty}u(r)=k$.
    \item If $p>m$, there exists a finite value $k^*>0$, such that there exist   stationary solutions with  $\lim\limits_{r\to\infty}u(r)=k$ if and only if $0\le k\le k^*$  if $p<p_S=\frac{m(N+2)}{N-2}$, or $0< k\le k^*$  if $p\ge p_S$.
        \item If $p=m$, there exists a critical length $L^*=L^*(N)$, such that there exist stationary solutions if and only if $L\le L^*$. The solution is  characterized by $k=\lim\limits_{r\to\infty}u(r)$, and is unique for any $k>0$  if $L<L^*$, while it is unique up to a multiplicative constant if $L=L^*$, in which case $k=0$.
  \end{enumerate}
\end{teo}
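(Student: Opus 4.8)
The plan is to reduce everything to a single radial ODE together with one shooting parameter. Writing $v=u^m$, $r=|x|$ and $q=p/m$, a radial stationary solution of \eqref{eq.principal} must satisfy $v''+\frac{N-1}{r}v'+v^{q}=0$ for $r<L$ and $v''+\frac{N-1}{r}v'=0$ for $r>L$, with $v'(0)=0$ and with $v$ and $v'=(u^m)'$ matching across $r=L$. In the exterior region the equation is explicitly integrable, $v(r)=v(L)+L^{N-1}v'(L)\frac{r^{2-N}-L^{2-N}}{2-N}$. Since $(r^{N-1}v')'=-r^{N-1}v^{q}<0$ and $v'(0)=0$, the interior profile is strictly decreasing, so $v'(L)<0$; the exterior profile therefore admits a finite positive limit as $r\to\infty$ only when $N\ge3$, which is exactly the necessity of $N\ge3$ (for $N=1,2$ the exterior solution is driven to $-\infty$). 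For $N\ge3$ that limit is $k^m$, yielding the key identity $k^m=\Phi(v_0):=v(L)+\frac{L}{N-2}v'(L)$, where $v=v(\cdot;v_0)$ is the interior solution with $v(0)=v_0$.

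I would then analyse $\Phi$ as a function of the shooting value $v_0$. Using the scaling $v(r;v_0)=v_0\,V(v_0^{(q-1)/2}r)$, with $V$ the universal profile solving the interior equation with $V(0)=1$, $V'(0)=0$, one obtains $\Phi(v_0)=v_0\,G(s)$, where $s=v_0^{(q-1)/2}L$ and $G(s)=V(s)+\frac{s}{N-2}V'(s)$. A direct computation using the ODE gives the clean identity $G'(s)=-\frac{s}{N-2}V(s)^{q}$, so $G$ strictly decreases from $G(0)=1$ as long as $V>0$. This single monotonicity formula organizes all three cases.

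For $p=m$ ($q=1$) the profile $V$ is the Bessel-type regular solution of $V''+\frac{N-1}{r}V'+V=0$, $\Phi$ is linear with $\Phi(v_0)=v_0\,G(L)$, and the identity above shows $G$ decreases from $1$ to its first zero $L^*$, which lies before the first zero of $V$; the trichotomy $G(L)>0$, $=0$, $<0$ then gives solutions for every $k>0$ (unique) when $L<L^*$, only $k=0$ unique up to a constant when $L=L^*$, and no solution when $L>L^*$, i.e. existence iff $L\le L^*$. For $p<m$ ($q<1$) the map $v_0\mapsto s$ is decreasing, $V$ vanishes at a finite radius, and differentiating $\Phi(v_0)=v_0G(s)$ gives $\Phi'(v_0)=G(s)-\frac{(q-1)s^2}{2(N-2)}V^{q}>0$ on the admissible range $\{G\ge0\}$; hence $\Phi$ is an increasing bijection onto $[0,\infty)$ and each $k\ge0$ is attained by a unique solution. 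For $p>m$ ($q>1$), $s$ increases with $v_0$, $\Phi(0^+)=0$, and one expects $\Phi$ to rise to a finite maximum equal to $(k^*)^m$ and then fall, producing solutions exactly for $0\le k\le k^*$.

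The main obstacle is the superlinear range and, within it, the role of $p_S$ at the endpoint $k=0$. Whether $k=0$ is attained is equivalent to whether $G$ has a finite zero $s^*$, which is governed by the global behaviour of the Lane–Emden profile $V$: for $1<q<q_S=\frac{N+2}{N-2}$ (that is $p<p_S$) the profile $V$ vanishes at a finite radius with $V'<0$, forcing $G(s^*)=0$ at a finite $s^*$ and thus a genuine solution with $k=0$; for $q\ge q_S$ ($p\ge p_S$) the profile stays positive for all $r$, and the decay rates of $V$ and $V'$ yield $G(s)>0$ for every finite $s$ with $G(s)\to0^+$ only as $s\to\infty$, so $k=0$ is excluded. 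Establishing these sharp dichotomies for $V$, namely finite vanishing below Sobolev versus global positivity and the sign of $G$ at infinity above Sobolev, is the delicate classical point, to be handled via the decay asymptotics of $V$ and a Pohozaev-type identity; this, together with the boundedness of $\Phi$ needed to produce the turning value $k^*$, is where the real work lies.
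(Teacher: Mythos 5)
Your proposal follows essentially the same route as the paper: your shooting function $\Phi(v_0)=v_0\,G(s)$ with $G(s)=V(s)+\frac{s}{N-2}V'(s)$ is exactly the paper's $c_1(A)=AF(A^{\frac{\gamma-1}{2}}L)$ with $F(r)=v(r)+\frac{1}{N-2}rv'(r)$, including the key monotonicity identity $G'(s)=-\frac{s}{N-2}V^{q}(s)$ and the same case analysis in $q=p/m$ relative to $1$ and $\frac{N+2}{N-2}$. The classical Lane--Emden facts you defer (finite vanishing of $V$ below the Sobolev exponent versus global positivity and the decay rates at or above it) are precisely the ones the paper also takes from the literature, so the two arguments are complete at the same level of rigor.
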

\begin{proof}
Putting  $w=u^m$ we obtain $w$ matching two functions  for $r<L$ and $r>L$, respectively. More precisely, $w$ is given by
\begin{equation}\label{stat-w}
w(r)=
\begin{cases}
Av(A^{\frac{\gamma-1}2}r),\qquad\text{for } 0<r<L, \\
c_1+c_2\phi(r),\qquad\text{for } r\ge L,
\end{cases}
\end{equation}
where $\gamma=p/m$, $\phi$ is the Green function
\begin{equation}
  \label{green}
\phi(r)=\begin{cases}
  r^{2-N},&\text{ if } N\ne2, \\ \log r,&\text{ if } N=2,
\end{cases}
\end{equation}
and $v$  satisfies
\begin{equation}
  \label{stat-rad1}
\begin{cases}
  v''+\dfrac{N-1}rv'+v^\gamma=0, \\
  v(0)=1,\quad v'(0)=0.
\end{cases}
\end{equation}

We observe that there exist no nonnegative stationary solution if $N=1$ or $N=2$ since the Green function is unbounded in those dimensions. Let then be $N\ge3$. It is well known that there exists a unique function $v$ solution to \eqref{stat-rad1} defined in a maximal interval $[0,r_0)$, which is positive and decreasing in $0<r<r_0$ and $\lim\limits_{r\to r_0}v(r)=0$, where $r_0<\infty$ if $0<\gamma<\gamma_S=\frac{N+2}{N-2}$, while $r_0=\infty$ if $\gamma\ge \gamma_S$. Moreover $v$ is explicit in the limit case $\gamma=\gamma_S$, namely $v(r)=(1+Br^2)^{\frac{2-N}2}$, $B=\frac1{N(N-2)}$, while $\lim\limits_{r\to\infty}r^{\frac2{\gamma-1}}v(r)=K(\gamma,N)$ if $\gamma>\gamma_S$.  See for instance~\cite[Lemma~3.IV.1]{GalaktionovKurdyumovMikhailovSamarski}.

In order to match the two pieces at $r=L$ we have to choose $A>0$ properly.
First we must have $A^{\frac{\gamma-1}2}L<r_0$ when $r_0$ is finite. Thus, depending on the sign of the exponent, we see that $v(A^{\frac{\gamma-1}2}L)>0$ for $A$ large if $\gamma<1$, for $A$ small if $1<\gamma<\gamma_S$,  and for $L<L_1$ if $\gamma=1$, where $L_1$ is the radius of the ball for which the first eigenvalue of the Laplacian is 1. We then study the matching conditions,
\begin{equation}
  \label{matching}
  \begin{cases}
    c_1+c_2L^{2-N}=Av(A^{\frac{\gamma-1}2}L), \\
    (N-2)c_2L^{1-N}=-A^{\frac{\gamma+1}2}v'(A^{\frac{\gamma-1}2}L),
  \end{cases}
\end{equation}
and characterize when it is $c_1\ge0$. Observe that $c_2>0$ trivially under the above conditions on $A$ or $L$. We get
$$
c_1=Av(A^{\frac{\gamma-1}2}L)+\frac L{N-2}A^{\frac{\gamma+1}2}v'(A^{\frac{\gamma-1}2}L)= AF(A^{\frac{\gamma-1}2}L),
$$
where
$$
F(r)=v(r)+\frac1{N-2}rv'(r).
$$
We compute
$$
\begin{array}{l}
F'(r)=\dfrac1{N-2}\big((N-1)v'(r)+rv''(r)\Big)=-\dfrac{rv^\gamma(r)}{N-2}<0 \quad \text{for } 0<r<r_0, \\ [3mm]
F(0)=1>0,\quad \begin{cases}F(r_0)=\dfrac{r_0v'(r_0)}{N-2}<0,&\text{if } \gamma<\gamma_S, \\
\lim\limits_{r\to\infty}F(r)=0,&\text{if } \gamma\ge\gamma_S.
\end{cases}
\end{array}
$$
The precise behaviour of $F$ at infinity in this latter cases is
$$
\left\{\begin{array}{ll}
F(r)=(1+Br^2)^{-\frac N2}, &\mbox{if } \gamma=\gamma_S, \\
F(r)\sim r^{-\frac 2{\gamma-1}}, &\mbox{if } \gamma>\gamma_S.
\end{array}\right.
$$
Then, if $\gamma<\gamma_S$ we have that $F$ has a unique root $0<r^*<r_0$, while $F(r)>0$ for every $r>0$ if $\gamma\ge\gamma_S$. Let $A^*=(r^*/L)^{\frac2{\gamma-1}}$ if $\gamma<\gamma_S$, $\gamma\neq1$. We thus have:

- If $\gamma<1$, the function $c_1=c_1(A)$ is positive and increasing in $A\in(A^*,\infty)$,  vanishes at $A=A^*$ and satisfies $\lim\limits_{A\to\infty}c_1(A)=\infty$.

- If $1<\gamma<\gamma_S$, then $c_1(A)$ is positive  in $A\in(0,A^*)$, and vanishes at $A=0$ and $A=A^*$.

- If $\gamma=\gamma_S$ it is $c_1(A)>0$ for every $A>0$ and $\lim\limits_{A\to\infty}c_1(A)=0$.

- If $\gamma>\gamma_S$ it is again $c_1(A)>0$ for every $A>0$, but $\lim\limits_{A\to\infty}c_1(A)>0$.

- If $\gamma=1$ we have $c_1(A)=AF(L)>0$ for every $A>0$ provided $0<L< L^*\equiv r^*$, $c_1(A)=0$ when $L=L^*$.

\

We have  characterized the existence of the stationary solutions in terms of the value $A=w(0)$, if $\gamma\ne1$, and in terms of the length $L$ when $\gamma=1$. The limit at infinity is
$k=\lim\limits_{r \to\infty} w(r)=c_1(A)$. In summary we have obtained that, if $\gamma<1$, for each $0\le k<\infty$ there exists a unique $A=c_1^{-1}(k)\in[A^*,\infty)$, whereas for $\gamma>1$ there exists some $A$ with $c_1(A)=k$ when $0\le k\le k^*$ if $\gamma<\gamma_S$ or when $0< k\le k^*$ if $\gamma\ge\gamma_S$; the maximum value $k^*$  is given by
$$
k^*=\max\limits_{0<r<A^*}c_1(A),
$$
where we put $A^*=\infty$ when $\gamma\ge \gamma_S$.
\end{proof}

\begin{rem}\label{rem-chino}
From this result we immediately deduce that there exist bounded solutions to problem \eqref{eq.principal} for $N\ge3$ and $p=m$ provided $L$ is small, which contradicts \cite{Liang}.
\end{rem}

The stationary solutions are explicit in the case $p=m$ since $v$ can be written in terms of Bessel functions $v(r)=r^{\frac{2-N}2}J_{\frac{N-2}2}(r)$. The matching condition is
    \begin{equation}\label{stationary-bessel}
    \frac{N-2}2J_{\frac{N-2}2}(L)+L J'_{\frac{N-2}2}(L)=0,
    \end{equation}
and $L^*$ is the first positive root of that equation.

For instance when $N=3$ we have $L^*=\pi/2$, and for $L\le \pi/2$ the solution is any multiple of
  \begin{equation}\label{stationary-seno}
    w(r)=\left\{\begin{array}{ll}
    \dfrac1r\,\sin r,&\text{ for } 0<r<L, \\ [3mm] \dfrac1r(\sin L-L\cos L)+\cos L,&\text{ for } r\ge L.
  \end{array}\right.
  \end{equation}

As a byproduct of the above calculations, just looking at negative values of the function $c_1(A)$, we describe the existence of stationary solutions for the Dirichlet problem. That is, we consider,  for some $R>L$, the problem
\begin{equation}\label{eq.dirichlet}
\left\{
\begin{array}{ll}
\Delta w+a(x) w^\gamma=0,\qquad & |x|<R,\\
w(x)>0,\qquad & |x|<R,\\
w(x)=0,\qquad & |x|=R.
\end{array}\right.
\end{equation}
Clearly if $0<R\le L$ and $\gamma<\gamma_S$, $\gamma\neq1$, the solution is given by $w(x)=Av(A^{\frac{\gamma-1}2}|x|)$, $A=\left(\frac{R}{r_0}\right)^{-\frac2{\gamma-1}}$, where $v$ is the solution to \eqref{stat-rad1}, while if $\gamma=1$ the solutions (any multiple of $v$) exist only when $R=L_1$, the length for which the eigenvalue of the Laplacian is 1. We consider here all dimensions $N\ge1$ (we set $L^*(N)=0$ for $N\le2$).

\begin{teo}\label{estacionarias-Dirichlet}
Problem \eqref{eq.dirichlet} with $R>L$ possesses bounded positive solutions  if $N\le2$ or  if $N\ge3$ and $\gamma<\gamma_S=\frac{N+2}{N-2}$. They are radially decreasing. Moreover,
 \begin{enumerate}
   \item If $0<\gamma<1$ they exist for every $R>L$. The value at the origin $w(0)=A=A(R)$ increases with $R$.
   \item If $1<\gamma<\gamma_S$ they exist for every $R>L$. The value $A=A(R)$ decreases with $R$.
   \item If $\gamma=1$ they exist only if $L^*<L<L_1$ and only for a precise value $R=R(L)>L_1$, which is decreasing in $L$.
 \end{enumerate}
\end{teo}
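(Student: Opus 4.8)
The plan is to reuse the inner/outer construction from the proof of Theorem~\ref{teo-stat}. I look for radial solutions $w=w(r)$ solving the semilinear equation in $r<L$ and harmonic in $L<r<R$. By the scaling of $\Delta w+w^\gamma=0$ the inner part must be $w(r)=Av(A^{(\gamma-1)/2}r)$ with $v$ the solution of \eqref{stat-rad1}, while the outer part is $w(r)=c_1+c_2\phi(r)$ with $\phi$ as in \eqref{green}, the constants $c_1=c_1(A)$, $c_2=c_2(A)$ being fixed by the $C^1$ matching \eqref{matching} at $r=L$. Since $v$ is decreasing and, for $N\ge3$, the tail $c_1+c_2r^{2-N}$ with $c_2>0$ is decreasing, each such $w$ is automatically radially decreasing, so the only extra requirement is $w(R)=0$.

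For $N\ge3$ the tail converges to $c_1$ at infinity, hence $w$ vanishes at a finite $R>L$ precisely when $c_1(A)<0$, and then the zero is unique. This is exactly the sign of $c_1$ excluded from the Cauchy discussion in Theorem~\ref{teo-stat}: it amounts to $F(s)<0$ at $s:=A^{(\gamma-1)/2}L$, i.e. $s\in(r^*,r_0)$, which forces $\gamma<\gamma_S$ (when $\gamma\ge\gamma_S$ one has $F>0$ and no Dirichlet solution exists). Solving $w(R)=0$ gives $R^{N-2}=c_2/(-c_1)$, and eliminating the constants through \eqref{matching} collapses this to the clean relation
$$
\Big(\frac RL\Big)^{N-2}=G(s):=\frac{s\,v'(s)}{(N-2)v(s)+s\,v'(s)},\qquad s\in(r^*,r_0).
$$
Evaluating the endpoints, $G(s)\to+\infty$ as $s\to r^*$ (the denominator equals $(N-2)F$, which vanishes there) and $G(s)\to1$ as $s\to r_0$ (where $v\to0$), so $R$ sweeps all of $(L,\infty)$; this yields existence for every $R>L$ in the two cases $0<\gamma<1$ and $1<\gamma<\gamma_S$.

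The monotonicity assertions are the core of the argument, and I expect them to be the main obstacle. Writing $G$ through $H(s):=sv'(s)/v(s)$ one checks that $G$ is an increasing function of $H$ on the admissible range (there $G>1$, equivalently $H<-(N-2)$), so it suffices to sign $H'$. Differentiating and removing $v''$ via \eqref{stat-rad1} simplifies the result to
$$
H'(s)=-\frac{(N-2)F(s)\,v'(s)+s\,v(s)^{\gamma+1}}{v(s)^2},
$$
which is strictly negative on $(r^*,r_0)$ since there $F<0$, $v'<0$ and $v>0$. Hence $G$, and therefore $R$, is strictly decreasing in $s$. Composing with $s=A^{(\gamma-1)/2}L$, which decreases in $A$ when $\gamma<1$ and increases in $A$ when $\gamma>1$, shows that $A(R)$ increases with $R$ for $0<\gamma<1$ and decreases with $R$ for $1<\gamma<\gamma_S$. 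Keeping track of this change of variables, and of the direction in which it reverses, is the only delicate bookkeeping.

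The borderline $\gamma=1$ is degenerate: the scaling disappears ($s=L$ for every $A$) and both $c_1=AF(L)$ and $c_2$ are proportional to $A$, so $R$ depends only on $L$, with $R(L)^{N-2}=L^{N-1}v'(L)/((N-2)F(L))$; the admissibility $c_1<0$ together with $v(L)>0$ reads $L^*<L<L_1$, where $L^*=r^*$ and $L_1=r_0$. Differentiating and again using \eqref{stat-rad1} to cancel terms gives $\frac{d}{dL}R(L)^{N-2}=-(N-2)L^{N-1}v(L)^2/((N-2)F(L))^2<0$, so $R(L)$ is strictly decreasing; since $R\to L_1$ as $L\to L_1$, this forces $R(L)>L_1$ throughout. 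Finally, for $N\le2$ the harmonic tail ($\phi=\log r$ or $\phi=r$) is unbounded, so any admissible inner profile continues to a decreasing outer solution that necessarily vanishes at a finite $R>L$; existence for all $R>L$ is then automatic and the monotonicity follows from the same ODE identities, now with $L^*=0$.
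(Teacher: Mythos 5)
Your proposal is correct and takes essentially the same route as the paper: Theorem~\ref{estacionarias-Dirichlet} is obtained there precisely as a byproduct of the matching analysis in the proof of Theorem~\ref{teo-stat}, by selecting the parameters for which $c_1(A)<0$, so that the decreasing harmonic tail $c_1+c_2\phi(r)$ crosses zero at a finite radius $R>L$. Your explicit computations --- the relation $(R/L)^{N-2}=G(s)$, the sign of $H'$ for $H=sv'/v$ obtained from \eqref{stat-rad1}, and the $\gamma=1$ formula for $R(L)$ --- all check out (e.g.\ for $N=3$, $\gamma=1$ they reduce to $R=L-\tan L$ on $(\pi/2,\pi)$) and merely supply the quantitative details the paper leaves implicit.
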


In the case $0<\gamma<\gamma_S$, $\gamma\neq1$, it is easy to establish the asymptotics, for $R\to\infty$
\begin{equation}
  \label{eq.A-R}
\begin{array}{ll}
  A^{1-\gamma}\sim\dfrac R{L},&\text{ if } N=1, \\ [3mm]
  A^{1-\gamma}\sim\dfrac1{L^2}\log R,&\text{ if } N=2, \\ [4mm]
  A\sim A^*(1+cR^{2-N}),&\text{ if } N\ge3.
\end{array}
\end{equation}
And in the case $\gamma=1$ we have, as $L\to L^*$,
\begin{equation}
  \label{eq.R-L}
\begin{array}{ll}
  R\sim\dfrac 2{L},&\text{ if } N=1, \\ [3mm]
  R\sim Le^{\frac2{L^2}},&\text{ if } N=2, \\ [4mm]
  R\sim c(L-L^*)^{-\frac1{N-2}},&\text{ if } N\ge3.
\end{array}
\end{equation}
See the proof of Theorem~\ref{teo-stat} for the values of $A^*,\,L^*,\,L_1$ and $r_0$.

\begin{rem}\label{rem-dirichlet}
We have proved that the Dirichlet problem in a ball corresponding to the equation in \eqref{eq.principal} has stationary solutions  only below the Sobolev exponent $p_S$ if $N\ge3$ or for every $p>0$ if $N\le2$.
The solutions to the Dirichlet problem can be used in comparison arguments as subsolutions to the Cauchy problem.
\end{rem}

\subsection{Exponential solutions for the linear equation}\label{expo-linear}

\

We   look for explicit radial global unbounded solutions in the case $p=m=1$. The length $L$ plays a fundamental role in the existence. We try solutions in the form
$$
u(x,t)=e^{\lambda t}\varphi_\lambda(|x|)
$$
where the profile $\varphi_\lambda$ satisfies two Bessel equations
\begin{equation}\label{two-bessel}
\left\{
\begin{array}{ll}
\varphi''+\frac{N-1}r\varphi'+(1-\lambda)\varphi=0,&\quad\text{if } 0<r<L,\\
\varphi''+\frac{N-1}r\varphi'-\lambda\varphi=0,&\quad\text{if } r>L,\\
\varphi(r)>0,&\quad\text{for } r\ge0,\\
\varphi'(0)=0.
\end{array}
\right.
\end{equation}

\begin{teo}\label{prop-lambda0}
Given any $L> L^*(N)$ there exists a unique value $\lambda_0=\lambda_0(L)\in(0,1)$ for which there exists a  solution $\varphi_{\lambda_0}\in C^1([0,\infty))$ of~\eqref{two-bessel}. The solution is unique up to multiplicative constants.
\end{teo}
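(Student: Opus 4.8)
The plan is to reduce the gluing problem \eqref{two-bessel} to the matching of logarithmic derivatives at $r=L$ of two explicit one-parameter families of radial solutions, and then to pin down the matching parameter $\lambda_0$ by a monotonicity argument. First I would write down the two branches explicitly. Regularity at the origin forces the inner profile to be a multiple of $\varphi_{\rm in}(r)=v(\sqrt{1-\lambda}\,r)$, where $v$ is the Bessel profile of \eqref{stat-rad1} with $\gamma=1$: the scaling $s=\sqrt{1-\lambda}\,r$ turns the inner equation of \eqref{two-bessel} into \eqref{stat-rad1}, and $v$ satisfies $v(0)=1$, $v'(0)=0$, $v''(0)=-1/N$, and is positive and decreasing on $(0,r_0)$ with $v(r_0)=0$ (here $r_0$ is the first zero of the corresponding Bessel function, which is $>L^*$ since $F$ is decreasing with zero $r^*=L^*$). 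Boundedness of $u=e^{\lambda t}\varphi$ in space selects, among the two outer solutions, the decaying modified Bessel branch $\varphi_{\rm out}(r)=r^{(2-N)/2}K_{(N-2)/2}(\sqrt{\lambda}\,r)>0$. Since each branch is determined up to a constant, a $C^1$ gluing exists (and is then unique up to one global constant) precisely when the logarithmic derivatives agree, i.e. when $g(\lambda):=W_{\rm in}(\lambda)-W_{\rm out}(\lambda)=0$, with $W_{\rm in}=\varphi_{\rm in}'(L)/\varphi_{\rm in}(L)$ and $W_{\rm out}=\varphi_{\rm out}'(L)/\varphi_{\rm out}(L)$. The admissible range of $\lambda$ is where $\varphi_{\rm in}>0$ on $[0,L]$, namely $\lambda>\lambda_c:=\max\{0,1-(r_0/L)^2\}$.

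Next I would settle the endpoint signs of $g$. At the right end $\lambda\to1^-$ one has $\sqrt{1-\lambda}\to0$, so $W_{\rm in}\sim-(1-\lambda)L/N\to0^-$, while $W_{\rm out}\to \frac{2-N}{2L}+K'_{(N-2)/2}(L)/K_{(N-2)/2}(L)<0$; hence $g(1^-)>0$. At the left end, if $L\le r_0$ then $\lambda_c=0$ and, since $K_{(N-2)/2}(\sqrt\lambda\,r)\sim c\,r^{2-N}$ as $\lambda\to0$, we get $W_{\rm out}\to-(N-2)/L$, so that $g(0^+)=\frac{N-2}{L\,v(L)}\,F(L)$ with $F$ the function from the proof of Theorem~\ref{teo-stat}. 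Because $F$ is decreasing with unique zero $L^*$, the hypothesis $L>L^*$ gives $F(L)<0$, hence $g(0^+)<0$. If instead $L>r_0$, then as $\lambda\to\lambda_c^+$ one has $v(\sqrt{1-\lambda}\,L)\to0^+$ with $v'<0$, so $W_{\rm in}\to-\infty$ and again $g(\lambda_c^+)=-\infty<0$. In all cases $g$ changes sign on $(\lambda_c,1)$, producing a root $\lambda_0\in(0,1)$.

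The core of the argument, and the step requiring most care, is uniqueness, for which I would show that $g$ is strictly increasing. Differentiating each branch in $\lambda$ and applying the Lagrange identity to the self-adjoint forms $(r^{N-1}\varphi')'+(1-\lambda)r^{N-1}\varphi=0$ on $(0,L)$ and $(r^{N-1}\varphi')'-\lambda\,r^{N-1}\varphi=0$ on $(L,\infty)$ yields $\partial_\lambda W_{\rm in}=\frac{1}{L^{N-1}\varphi_{\rm in}(L)^2}\int_0^L r^{N-1}\varphi_{\rm in}^2\,dr>0$, the boundary term at $r=0$ vanishing by regularity; and, with the boundary term at infinity killed by the exponential decay of $\varphi_{\rm out}$, $\partial_\lambda W_{\rm out}=-\frac{1}{L^{N-1}\varphi_{\rm out}(L)^2}\int_L^\infty r^{N-1}\varphi_{\rm out}^2\,dr<0$. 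Hence $g'=\partial_\lambda W_{\rm in}-\partial_\lambda W_{\rm out}>0$, so the sign change forces exactly one $\lambda_0$. The two matching constants are then determined up to a common factor, giving uniqueness of $\varphi_{\lambda_0}$ up to multiples.

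The main obstacle is organizing these monotonicity identities together with the small- and large-argument asymptotics of the modified Bessel branch cleanly, so that the endpoint limits of $W_{\rm out}$ are rigorous; everything else is a direct intermediate value argument anchored at the already-computed function $F$ and the critical length $L^*$. I would also record the two limiting cases $L=L^*$ (where $g(0^+)=0$, recovering the stationary profile $\lambda_0=0$) and $L<L^*$ (where $g>0$ throughout $(0,1)$, so no admissible $\lambda_0$), which confirm that $L>L^*$ is exactly the threshold in the statement.
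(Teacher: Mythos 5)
Your reduction and existence argument coincide with the paper's: writing the inner branch as $r^{-\nu}J_\nu(\sqrt{1-\lambda}\,r)$ and the outer one as $r^{-\nu}K_\nu(\sqrt{\lambda}\,r)$, your matching function $g(\lambda)=W_{\rm in}(\lambda)-W_{\rm out}(\lambda)$ is exactly the paper's $\Phi(\lambda,L)$ in \eqref{lambda-L} (the $-\nu/r$ terms cancel), and your endpoint signs --- $g(1^-)>0$ always, $g$ negative at the left endpoint precisely when $L>L^*$, with the threshold identified through the function $F$ of Theorem~\ref{teo-stat}, equivalently \eqref{stationary-bessel} --- reproduce the paper's intermediate-value argument. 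Where you genuinely depart from the paper is uniqueness. The paper notes that $\Phi(\cdot,L)$ may have several roots (coming from the zeros of $J_\nu$), discards those below $\lambda_c$ because the corresponding inner profiles change sign, and then simply ``chooses the biggest root''; it never shows that the admissible interval $(\lambda_c,1)$, on which $J_\nu(L\sqrt{1-\lambda})>0$, contains only \emph{one} root. Your Lagrange-identity computation, $\partial_\lambda W_{\rm in}=\frac{1}{L^{N-1}\varphi_{\rm in}(L)^2}\int_0^L r^{N-1}\varphi_{\rm in}^2\,dr>0$ and $\partial_\lambda W_{\rm out}=-\frac{1}{L^{N-1}\varphi_{\rm out}(L)^2}\int_L^\infty r^{N-1}\varphi_{\rm out}^2\,dr<0$, gives strict monotonicity of $g$ on the admissible interval and hence exactly one root there. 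I checked both identities and the vanishing of the boundary terms (at $r=0$ by regularity, at $r=\infty$ by the exponential decay of $K_\nu$ and of its $\lambda$-derivative); they are correct. This is a real improvement: it closes a uniqueness point the paper leaves implicit, by a soft argument that needs no fine Bessel-function facts.

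Two caveats. First, your left-endpoint analysis is written only for $N\ge3$: for $N=2$ ($\nu=0$) the asymptotics $K_\nu(z)\sim cz^{-\nu}$ fails (one has $K_0(z)\sim\log(1/z)$), and your formula $g(0^+)=\frac{N-2}{Lv(L)}F(L)$ degenerates to $0$; the correct statement is $W_{\rm out}\to0^-$ and $g(0^+)=J_0'(L)/J_0(L)<0$ for $0<L<\eta_{0,1}$ (if $L\ge\eta_{0,1}$ your case $L>r_0$ applies instead). This is a two-line patch, but as written your proof misses $N=2$, which the statement covers since $L^*(2)=0$. Second, like the paper, you discard the $I_\nu$ component by fiat --- you by ``boundedness'', the paper by ``positivity''. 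Note that positivity alone does not force $B=0$: whenever $g(\lambda)>0$ the matched solution has $B>0$ and $C>0$, hence is everywhere positive (the paper itself remarks that such functions serve as subsolutions), so system \eqref{two-bessel} as literally written has positive $C^1$ solutions for a whole interval of $\lambda$. The uniqueness assertion is only true within the class of decaying (equivalently, bounded) profiles; your explicit boundedness selection is the correct reading, so it should be stated as a standing hypothesis rather than derived.
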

\begin{proof} The solution of both Bessel equations in \eqref{two-bessel} give
$$
\varphi_\lambda(r)=r^{-\nu}\left\{
\begin{array}{ll}
J_{\nu}(\sqrt{1-\lambda}\,r), \qquad & \text{if } 0<r< L,\\
BI_{\nu}(\sqrt{\lambda}\,r)+CK_{\nu}(\sqrt{\lambda}\,r), \qquad & \text{if } r>L.
\end{array}\right.
$$
Here $J_\nu$ is the  Bessel function of first kind of order $\nu\equiv\frac{N-2}2$, and $I_\nu,\,K_\nu$ are the modified Bessel functions of order $\nu$, respectively of first and second kind.
For the case $N=1$ we refer to \cite{FerreiradePablo}.
Denote also by $\eta_{\nu,k}$  the $k$--th root of $J_\nu$. The condition $\varphi_\lambda(r)>0$ implies $L\sqrt{1-\lambda}<\eta_{\nu,1}$ and also that no modified Bessel function of first kind appear, so $B=0$. Recall that $K_\nu>0$, $K_\nu'<0$ and $K_\nu\sim z^{-1/2}e^{-z}$ at infinity. Functions with $B\ne0$ will be useful as subsolutions.

The compatibility conditions at $r=L$ gives the value of $C$ and $\lambda$. First, continuity implies
$$
C=\dfrac{J_{\nu}(L\sqrt{1-\lambda})}{K_{\nu}(L\sqrt{\lambda})}>0.
$$
Now differentiability  fixes the value of $\lambda$ in terms of $L$ if there exists a solution to the equation
\begin{equation}\label{lambda-L}
\Phi(\lambda,L)\equiv\sqrt{1-\lambda}\,\dfrac{J_{\nu}'(L\sqrt{1-\lambda})}{J_{\nu}(L\sqrt{1-\lambda})}-
\sqrt{\lambda}\,\dfrac{K_{\nu}'(L\sqrt{\lambda})}{K_{\nu}(L\sqrt{\lambda})}=0.
\end{equation}
We see next that there always exists a root $\lambda_0=\lambda_0(L)$  if $N=2$, but only for $L$ large if $N>2$.

For $N=2$ we have
$$
\Phi(0,L)=\frac{J_0'(L)}{J_0(L)}<0,\qquad \Phi(1,L)=-\frac{K_0'(L)}{K_0(L)}>0.
$$
There exists a solution for every $L>0$, unique if $L$ is small. As $L$ increases multiple roots appear, due to the zeroes of $J_0(L\sqrt{1-\lambda})$, and we choose the biggest root, $\lambda_0\in(1-\frac{\eta_{0,1}^2}{L^2},1)$, in order  to get $\varphi_{\lambda_0}(r)>0$ for every $0\le r\le L$.

When $N>2$ the function $\Phi(\lambda,L)$ satisfies, for $L>0$ small,
$$
\Phi(\lambda,L)\sim\frac{2\nu}L>0\quad\text{for every } 0<\lambda<1.
$$
There exists then no root. On the other hand,
$$
\Phi(0,L)=\frac{J'_{\nu}(L)}{J_{\nu}(L)}+\frac{\nu}L,\qquad \Phi(1,L)=-\frac{K'_{\nu}(L)}{K_{\nu}(L)}>0.
$$
We see that there is a solution $\lambda_0=\lambda_0(L)$ if and only if $L>\overline L^*$, where  $\overline L^*\in(0,\eta_{\nu,1})$ is the first root of $\Phi(0,L)$.
Observe that this value $\overline L^*$  coincides with the value $L^*$ that appeared in the construction  of the stationary solutions, see~\eqref{stationary-bessel}.
Choosing  as before the largest root $\lambda_0$ when multiple roots appear, we obtain a function $\lambda_0=\lambda_0(L)$ for $L>  L^*$, increasing with $\lim\limits_{L\to (L^*)^+}\lambda_0( L)=0$, $\lim\limits_{L\to\infty}\lambda_0(L)=1$. In fact $\lambda_0\sim1- cL^{-2}$ for $L$ large.
\end{proof}

\subsection{Self-similar solutions of the pure diffusion equation}\label{subsec-self-similar}

We study in this subsection the existence of  radial solutions in self-similar form of two special types for the pure diffusion equation $
u_t=\Delta u^m$ for $x\neq0$. We consider fast diffusion $m<1$, but we restrict ourselves to the so called \emph{not too fast} diffusion range, $m>m^*=\frac{(N-2)_+}N$. This solution will be used in comparison arguments in our problem \eqref{eq.principal} to study the grow-up set for different values of the reaction exponent $p$.

We look for solutions $U=U(r,t)$, $r=|x|$, to the equation
$$
u_t=\Delta u^m,\qquad x\neq0,
$$
of the forms
\begin{equation}\label{eq.self-types}
U(r,t)=t^{\alpha} f(r t^{\beta})\qquad \text{ or }\qquad U(r,t)=e^{\alpha t} f(r e^{\beta t}).
\end{equation}
We denote those solutions as of types I and II, respectively. In both cases the profile $f$ verifies the equation
\begin{equation}
  \label{eq.perfil}
(f^m)''+\frac{N-1}\xi (f^m)'=\alpha f+\beta \xi f',\qquad \xi>0,
\end{equation}
where $f'$ denote $df/d\xi$, and the self-similar exponents satisfy the relation
\begin{equation}
  \label{eq.delta}
  \delta\equiv\alpha (1-m)-2\beta\in \{0,1\}.
\end{equation}
In fact we have $\delta=1$ for solutions of type I and $\delta=0$ for solutions of type II.
As we have said, when using those solutions for comparison we will consider each of those types depending on the value of $p$ in problem \eqref{eq.principal}. We now obtain solutions of the ODE \eqref{eq.perfil} for all values of $\delta\ge0$. We refer to \cite{FerreiradePablo} for the case $N=1$.

\begin{lema} Let  $m_*<m<1$, $\alpha>0$ and $\beta>0$ be three positive parameters such that $\delta=\alpha (1-m)-2\beta\ge 0$. Then, there exists a non-negative decreasing  solution $f$ of \eqref{eq.perfil} for $\xi>0$,  such that $\alpha f+\beta\xi f'\ge 0$. Moreover, the behaviour of $f$ is given by
\begin{equation}
  \label{eq.f-0}
f(\xi)\sim
\left\{
\begin{array}{ll}
1,\qquad & \text{if } N\le2,\\
\xi^{-\frac{N-2}m}, & \text{if } N\ge 3,
\end{array}\right.
\qquad \text{as } \xi\sim 0,
\end{equation}
and
\begin{equation}
  \label{eq.f-infty}
f(\xi)\sim
\left\{
\begin{array}{ll}
\xi^{\frac{-2}{1-m}},\quad & \mbox{if }\delta>0,\\
\xi^{\frac{-2}{1-m}} (\log(\xi))^{\frac1{1-m}}, & \mbox{if }\delta=0,
\end{array}\right.
 \qquad \mbox{as }\xi\to\infty.\qquad
\end{equation}
\end{lema}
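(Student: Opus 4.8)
The plan is to treat \eqref{eq.perfil} as a boundary-value problem on $(0,\infty)$ whose solution is singled out by its prescribed behaviour at the two endpoints, and to produce it by a shooting/connection argument after first identifying the admissible asymptotics by dominant balance. Setting $w=f^m$ turns \eqref{eq.perfil} into $w''+\frac{N-1}\xi w'=w^{\frac1m-1}\big(\alpha w+\frac\beta m\xi w'\big)$, and multiplication by $\xi^{N-1}$ gives the divergence form $\big(\xi^{N-1}w'\big)'=\xi^{N-1}\big(\alpha f+\beta\xi f'\big)$, whose integral I will use to control signs and fluxes. Near $\xi=0$ the diffusion terms dominate, so $w$ is harmonic to leading order and the two admissible behaviours are $w\sim$ const (i.e. $f\sim$ const) for $N\le2$ and $w\sim\xi^{2-N}$ (i.e. $f\sim\xi^{-(N-2)/m}$) for $N\ge3$. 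Near $\xi=\infty$ the ansatz $f\sim A\xi^{-\sigma}$, $\sigma=\frac2{1-m}$, balances all three terms of \eqref{eq.perfil} at the same order and fixes $A^{m-1}=\frac\delta{(1-m)\,m\,\sigma(\sigma-N)}$; here the hypothesis $m>m_*=\frac{(N-2)_+}N$ is precisely the condition $\sigma>N$ that makes $A>0$ when $\delta>0$, while for $\delta=0$ this balance degenerates and the power tail must be corrected by a logarithm.

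To make the connection transparent I would introduce the autonomous variable $s=\log\xi$ and the rescaled unknown $z=\xi^{2m/(1-m)}w$, which cancels the explicit $\xi$-dependence and recasts \eqref{eq.perfil} as a planar autonomous system in $(z,\dot z)$. Its equilibria are $z=0$ and, when $\delta>0$, a single positive point $z_+$ representing the power tail $A\xi^{-\sigma}$. Linearizing at $z=0$ gives eigenvalues $a$ and $a-(N-2)$ with $a=\frac{2m}{1-m}$, both positive thanks to $m>m_*$, so $z=0$ is a source whose two eigendirections reproduce exactly the two admissible behaviours at the origin ($w\sim\xi^{2-N}$ along the slow one, $w\sim$ const along the fast one); linearizing at $z_+$ gives a product of eigenvalues of sign $\frac{m-1}m<0$, so $z_+$ is a saddle. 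I note that the scaling invariance of \eqref{eq.perfil} acts here merely as translation in $s$, so it does not collapse the one-parameter family of orbits issuing from the source.

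The solution I want is then the orbit that leaves the source $z=0$ and, for $\delta>0$, lands on the one-dimensional stable manifold of the saddle $z_+$. I would obtain it by shooting along the one-parameter family of departing orbits and exhibiting a dichotomy: orbits that start ``too low'' reach $z=0$ at a finite $s$, i.e. $f$ vanishes at a finite $\xi$, while orbits that start ``too high'' have $z$ escaping to $+\infty$, i.e. $f$ fails to decay; by continuity the separating orbit is the desired heteroclinic. Along this orbit one checks $f>0$ and $f'<0$ throughout. The remaining sign condition $\alpha f+\beta\xi f'\ge0$ I would deduce from the divergence identity together with a maximum principle: the quantity $P=\alpha f+\beta\xi f'$ is, up to a positive factor, the time derivative $U_t$ of the associated self-similar solution of $u_t=\Delta u^m$, hence solves the linear degenerate-parabolic equation obtained by differentiating in $t$, and its sign is preserved.

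Finally I would read off the endpoint asymptotics. For $\delta>0$, convergence of $z$ to $z_+$ yields $f\sim A\xi^{-2/(1-m)}$ at once, and the behaviour at $0$ follows from the eigendirection along which the orbit leaves the source. The delicate case, and the step I expect to be the main obstacle, is $\delta=0$: the equilibrium $z_+$ has disappeared, the selected orbit now escapes to $z=+\infty$, and because the linear terms no longer dominate the nonlinear one a refined asymptotic analysis is needed to show that $z$ grows like $s^{m/(1-m)}$, equivalently $f\sim\xi^{-2/(1-m)}(\log\xi)^{1/(1-m)}$. Proving this logarithmic rate rigorously, and verifying that the shooting dichotomy indeed produces a genuine connection for every admissible pair $(\alpha,\beta)$ with $\delta\ge0$, are the points requiring the most care; the origin behaviour and the sign conditions are comparatively routine once the divergence identity and the phase-plane structure are in hand.
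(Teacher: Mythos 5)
Your reduction is sound and, modulo coordinates, it is essentially the paper's own approach: the paper sets $X=\xi f'/f$, $Y=\frac1m\xi^2f^{1-m}$, $\eta=\log\xi$ and studies the resulting autonomous planar system, while you use $(z,\dot z)$ with $z=\xi^{2m/(1-m)}f^m$. Your local computations are correct (the node at $z=0$ with eigenvalues $\frac{2m}{1-m}$ and $\frac{2m}{1-m}-(N-2)$, both positive exactly when $m>m_*$; the saddle $z_+$ existing iff $\delta>0$; the tail constant $A$), and they correspond exactly to the paper's critical points $A$, $B$, $C$. The genuine gaps are in how you establish the connecting orbit, the sign condition, and the asymptotics.

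First, your argument for $\alpha f+\beta\xi f'\ge0$ is circular. For the self-similar solution $U(r,t)=t^\alpha f(rt^\beta)$ one has $U_t=t^{\alpha-1}\bigl(\alpha f(\xi)+\beta\xi f'(\xi)\bigr)$ with $\xi=rt^\beta$, so the sign of $U_t$, as a function of $\xi$, is literally the same at every time; a parabolic maximum principle can only \emph{propagate} a sign you already know at some initial time, which is exactly the claim to be proved. Second, your shooting dichotomy is not even well posed when $\delta=0$: in that case the desired orbit itself escapes to $z=+\infty$ (since $\xi^{2/(1-m)}f\sim(\log\xi)^{1/(1-m)}\to\infty$), so ``$z$ escapes to infinity'' cannot characterize the bad set, and separating exponential escape from the subexponential escape of the separatrix is precisely the refined analysis you defer. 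Third, in your coordinates both admissible origin behaviours collapse onto the single equilibrium $z=0$, so even after a successful shooting you would still have to rule out that the separating orbit leaves along the fast eigendirection, which would give $f\sim\mathrm{const}$ instead of $\xi^{-(N-2)/m}$ for $N\ge3$, contradicting \eqref{eq.f-0}. The paper settles all three points with one device your plan lacks: a region $\Omega$, bounded by $Y=0$, the line $X=-\frac2{1-m}$ and the nullcline $\dot X=0$, which is invariant \emph{backward} in time and on which $\dot Y>0$. Tracing backward the stable manifold of the saddle (or, when $\delta=0$, orbits through points of the nullcline with $Y$ arbitrarily large), monotonicity of $Y$ inside $\Omega$ forces convergence to $B$ (resp.\ $A$ if $N=2$); since $A$ lies outside $\overline\Omega$ when $N\ge3$, the origin behaviour \eqref{eq.f-0} comes for free, and since $X\ge-\frac2{1-m}$ on $\Omega$, so does the inequality $\alpha f+\beta\xi f'=(\alpha+\beta X)f\ge\frac{\delta}{1-m}\,f\ge0$. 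The behaviour \eqref{eq.f-infty} for $\delta=0$ then follows from the flow near $X=-\frac2{1-m}$, $Y\to\infty$, rather than from a delicate analysis at a degenerate equilibrium at infinity. Your outline could be completed, but each of these three steps must be supplied, and the invariant-region argument is the natural way to supply them simultaneously.
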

\begin{proof}
We assume $N\ge2$ and introduce the following variables
$$
X=\frac{\xi f'}{f}, \qquad Y=\frac1{m} \xi^2 f^{1-m}, \qquad \eta=\log\xi.
$$
The resulting system is
$$
\left\{
\begin{array}{l}
\dot{X}=(2-N)X-mX^2+Y(\alpha+\beta X),\\
\dot{Y}=(2+(1-m)X)Y,
\end{array}\right.
$$
where $\dot{X}=dX/d\eta$. We look for non-negative decreasing profiles, so we focus on the second quadrant $X<0\,,\, Y>0$.

We first consider $\delta>0$, in which case the critical points are
$$
A=(0,0), \qquad B=\left(\frac{2-N}{m},0\right), \qquad C=\left(\frac{-2}{1-m},\frac{4-2N(1-m)}{(1-m)\delta}\right).
$$
Notice that since $m>m_*$ the critical point $C$ belongs to the second quadrant. 
%
%
%
Let us define,
$$
\begin{array}{l}
\displaystyle \Gamma_1=\left\{\frac{-2}{1-m}\le X\le \frac{2-N}{m}\,,\, Y=0\right\},\\[3mm]
\displaystyle \Gamma_2=\left\{X=\frac{-2}{1-m}\,,\, 0\le Y\le \frac{4-2N(1-m)}{(1-m)\delta}\right\},\\[3mm]
\displaystyle \Gamma_3=\left\{\frac{-2}{1-m}\le X\le \frac{2-N}{m}\,,\, Y= \frac{mX^2-(2-N)X}{\alpha+\beta X}\right\}.
\end{array}
$$
Note that in $\Gamma_1\cup\Gamma_2$ we have $\dot{X}\le 0$ and $\dot{Y}=0$,
while in $\Gamma_3$  we have $\dot{X}= 0$ and $\dot{Y}>0$. Then, if we look at the orbits backward in time, the region
$$
\Omega=\left\{\frac{-2}{1-m}\le X\le \frac{2-N}{m}\,,\, 0\le Y\le \frac{mX^2-(2-N)X}{\alpha+\beta X}\right\}
$$
is invariant. Even more, in this region it holds $dY/d\eta>0$, so if we look for the orbit passing through a point in either $\Gamma_2$ or $\Gamma_3$
the only possibility is that it comes from the point $A$ if $N=2$ or $B$ if $N\ge3$. Therefore, there exists a separatrix orbit connecting the points $A$ and $C$ if $N=2$ and the points $B$ and $C$ if $N\ge 3$, see Fig. \ref{fig.planofases}.

\begin{figure}[!ht]
\hspace*{-1.5cm}
\includegraphics[scale=.4]{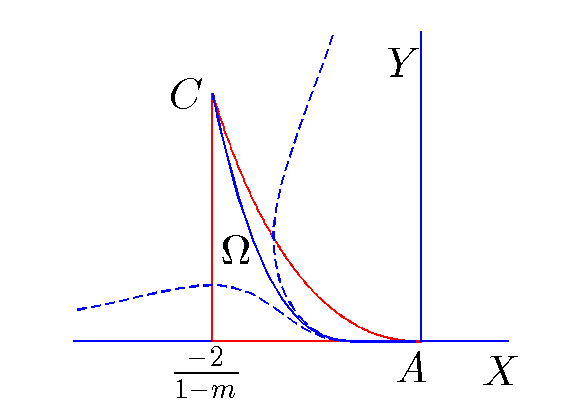}\
\includegraphics[scale=.4]{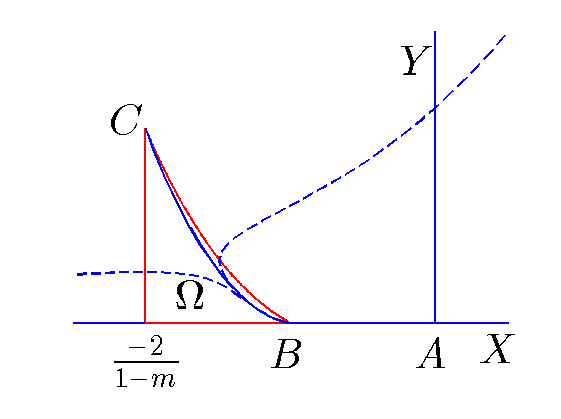}
\caption{The phase plane for $\delta>0$. $N=2$ to the left and $N\ge 3$ to the right.}
\label{fig.planofases}
\end{figure}

This separatrix orbit gives us a decreasing positive trajectory such that
$$
f_*(\xi) \sim \xi^{-\frac2{1-m}} \quad \mbox{as } \xi\to\infty.
$$
For $\xi$ near zero we have  $Y\sim e^{2\eta}$ in dimension $N=2$, while for $N\ge 3$ we have  $X\sim -(N-2)/m$. Therefore
$$
f_*(\xi)\sim
\left\{
\begin{array}{ll}
1,\qquad & \text{if } N=2,\\
\xi^{-\frac{N-2}m}, & \text{if } N\ge 3,
\end{array}\right.
\qquad \text{as } \xi\sim 0.
$$

\

Now we consider $\delta=0$. In this case, the critical point $C$ disappears, but we can use the same argument as before, observing that the separatrix orbit connects the point $A$ with the point $(-2/(1-m),\infty)$ for $N=2$ and the point $B$ with the point $(-2/(1-m),\infty)$ if $N\ge 3$. The picture is the analogous to Fig. \ref{fig.planofases} with the point $C$ going vertically to infinity. We obtain in this way a decreasing positive solution with the same behaviour as before near the origin,  and the behaviour for $\xi$ large
$$
f_*(\xi)\sim \left(\frac{\log\xi}{\xi^2}\right)^{\frac1{1-m}}.
$$

Finally, we observe that in both cases the separatrix orbit lives in $\Omega$ for all the values of the parameter $\xi>0$, which implies  $X\ge -2/(1-m)$. Thus,
$$
\alpha f_*+\beta \xi f_*'= (\alpha+\beta X)f_*\ge \frac\delta{1-m}\,f_*\ge0.
$$
\end{proof}

As a corollary we obtain the existence of grow-up self-similar solutions to the fast diffusion equation.

\begin{teo}\label{cor.ss.pme}
For every $m_*<m<1$ there exists two biparametric families of self-similar solutions $\{U_{\alpha,\mu,I},\,\alpha>1/(1-m),\,\mu>0\}$ and $\{U_{\alpha,\mu,II},\,\alpha>0,\,\mu>0\}$, to the equation $\partial_tu=\Delta u^m$ for $x\neq0$, of types I and II respectively. These solutions are radially decreasing in space and increasing in time.
\end{teo}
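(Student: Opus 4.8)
The plan is to obtain both families directly from the preceding Lemma, handling the two self-similar types separately and recovering the second parameter from a scaling symmetry of the diffusion equation.

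First I would fix the self-similar exponents. For type I I impose $\delta=1$ in \eqref{eq.delta}, which forces $\beta=\tfrac12(\alpha(1-m)-1)$; the hypothesis $\beta>0$ of the Lemma is then equivalent to $\alpha>1/(1-m)$, exactly the announced range. For type II I impose $\delta=0$, giving $\beta=\tfrac12\alpha(1-m)$, so that $\beta>0$ amounts to $\alpha>0$. In either case the Lemma, applied with these $\alpha$ and $\beta$, furnishes a nonnegative decreasing profile $f$ solving \eqref{eq.perfil}. Substituting the respective ansatz of \eqref{eq.self-types} into $u_t=\Delta u^m$ reduces the PDE, by construction, precisely to \eqref{eq.perfil}; hence $U(r,t)=t^{\alpha}f(rt^{\beta})$ and $U(r,t)=e^{\alpha t}f(re^{\beta t})$ are solutions for $x\neq0$.

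To produce the second parameter I would use the scaling invariance of the equation: if $u$ solves $u_t=\Delta u^m$ then so does $\mu\,u(\mu^{(1-m)/2}x,t)$ for every $\mu>0$. Applying this transformation to the solution just constructed gives, for each fixed $\alpha$, a one-parameter family $U_{\alpha,\mu}$ whose profile is $f_{\mu}(\xi)=\mu f(\mu^{(1-m)/2}\xi)$; a direct check shows $f_\mu$ solves \eqref{eq.perfil} with the same $\alpha,\beta$, so the construction is consistent. Together with the free parameter $\alpha$ this yields the two biparametric families $U_{\alpha,\mu,I}$ and $U_{\alpha,\mu,II}$ in the stated ranges.

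It remains to verify the monotonicity. Radial monotonicity in space is immediate, since $f$ (and hence $f_\mu$) is decreasing. For monotonicity in time, differentiating the ansatz gives $\partial_t U=t^{\alpha-1}\bigl(\alpha f+\beta\xi f'\bigr)$ for type I and $\partial_t U=e^{\alpha t}\bigl(\alpha f+\beta\xi f'\bigr)$ for type II, evaluated at $\xi=rt^{\beta}$ and $\xi=re^{\beta t}$ respectively. Both are nonnegative because the Lemma guarantees $\alpha f+\beta\xi f'\ge0$, a sign condition preserved under the scaling $f\mapsto f_\mu$. I expect no serious obstacle here: the corollary is essentially a repackaging of the Lemma, so the only tasks are matching the parameter ranges through $\delta$, identifying the scaling that supplies $\mu$, and checking the sign of $\partial_t U$; all the analytic difficulty, namely the existence of the profile via the phase-plane argument, is already contained in the Lemma.
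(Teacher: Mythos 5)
Your proposal is correct and takes essentially the same route as the paper: the profile is obtained from the Lemma with $\beta$ determined by $\delta\in\{0,1\}$ (which is exactly what produces the stated ranges of $\alpha$), space and time monotonicity are read off from $f'\le0$ and $\alpha f+\beta\xi f'\ge0$, and the second parameter comes from scaling. Your family $f_\mu(\xi)=\mu f\bigl(\mu^{(1-m)/2}\xi\bigr)$, derived from the PDE's scaling symmetry, is the same family as the paper's profile rescaling $f_\mu(\xi)=\mu^{2/(1-m)}f(\mu\xi)$ up to relabelling $\mu$.
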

\begin{proof} For each $\alpha>1/(1-m)$  in the case of type I, or $\alpha>0$ in the case of type II, we consider the self-similar solution corresponding to the profile $f=f_*$ just constructed with $\beta>0$ satisfying \eqref{eq.delta} and
$$
\lim_{\xi\to0^+}\xi^{\frac{(N-2)_+}m}f(\xi)=1.
$$
Monotonicity in space follows from the property $f'(\xi)\le0$. As to the monotonicity in time we use the fact that $\alpha f(\xi)+\beta\xi f'(\xi)\ge 0$. Now for each $\mu>0$ we consider the self-similar solution with profile $f_\mu(\xi)=\mu^{\frac2{1-m}}f(\mu\xi)$.
\end{proof}

\section{Bounded vs. unbounded  solutions}\label{sect-bdd-notbb}\setcounter{equation}{0}

By the definition of global existence exponent $p_0$, if $p<p_0$ all the solutions are global, while if $p>p_0$ there exist solutions that blow-up in finite time. The value of $p_0$ is given in \eqref{exp-p0}. We study in this section two different questions: $(i)$ if the solutions are global or not in the limit case $p=p_0$; and $(ii)$ if the global solutions for $p\le p_0$ are bounded or not.

Before that we observe that the value of $p_0$ in the case $N\ge2$ and $m<1$ is not covered by the literature, though it is easy to see that $p_0=1$ and that it lies in the global solutions side.
\begin{lema}
  \label{lem-exp} Let $m\le1$. Every solution is global if $p\le1$, while there exist blow-up solutions when $p>1$. Thus $p_0=1$.
\end{lema}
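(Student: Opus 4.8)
The statement claims that for $m \le 1$, every solution is global when $p \le 1$, blow-up solutions exist when $p > 1$, hence $p_0 = 1$.

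Let me think about the proof strategy.

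**The claim has two parts:**
1. When $p \le 1$: all solutions are global (no blow-up)
2. When $p > 1$: there exist blow-up solutions

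**Part 1: Global existence for $p \le 1$, $m \le 1$**

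The key tool here is the ODE upper bound already mentioned in the paper. The reaction term $a(x)u^p = \mathds{1}_{B_L}(x)u^p \le u^p$ (when $u \ge 1$) or more carefully, the reaction is bounded by $u^p$ everywhere.

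For $p \le 1$, consider the ODE $U'(t) = U^p$.
- If $p < 1$: $U(t) = ((1-p)t + U_0^{1-p})^{1/(1-p)}$, which grows like $t^{1/(1-p)}$ — polynomial, never blows up in finite time.
- If $p = 1$: $U(t) = U_0 e^t$ — exponential, never blows up in finite time.

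The strategy would be: compare $u$ with a supersolution built from the ODE solution. Since diffusion (whether $m=1$ or $m<1$) doesn't increase the $L^\infty$ norm (maximum principle / the diffusion term integrates to give a dissipative effect), and the reaction satisfies $a(x)u^p \le u^p \le \max\{1, U^p\}$...

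Actually, the cleanest approach: Let $\overline{U}(t)$ solve $\overline{U}' = \overline{U}^p$ with $\overline{U}(0) = \|u_0\|_\infty$. Then $\overline{u}(x,t) = \overline{U}(t)$ is a spatially constant supersolution to the PDE because:
- $\partial_t \overline{u} = \overline{U}^p$
- $\Delta \overline{u}^m = 0$ (constant in space)
- $a(x)\overline{u}^p \le \overline{u}^p = \overline{U}^p$

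So $\partial_t \overline{u} - \Delta \overline{u}^m - a(x)\overline{u}^p = \overline{U}^p - 0 - a(x)\overline{U}^p \ge 0$. ✓

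By comparison, $u(x,t) \le \overline{U}(t)$, which is finite for all $t$. Hence the solution is global. **This works for both $p < 1$ and $p = 1$, and for any $m > 0$** — so in particular for $m \le 1$.

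This gives the global existence part very cleanly.

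**Part 2: Blow-up for $p > 1$, $m \le 1$**

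Here I need to construct at least one blowing-up solution.

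The natural approach: find a subsolution that blows up in finite time, with large enough initial data supported in $B_L$ where the reaction is active.

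For $p > 1$, the ODE $U' = U^p$ blows up in finite time. But I need spatial structure because the diffusion can spread out and weaken the reaction.

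The standard technique is an **eigenfunction / energy argument** or constructing a self-similar blow-up subsolution.

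Strategy using eigenfunctions: Let $\psi$ be the first eigenfunction of $-\Delta$ on $B_L$ with Dirichlet conditions, $-\Delta \psi = \lambda_1 \psi$, $\psi > 0$, normalized so $\int_{B_L} \psi = 1$.

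Consider the quantity $y(t) = \int_{B_L} u^m \psi \, dx$ (or an appropriate moment). Multiply the equation by $\psi$ and integrate. Since $m \le 1$, I should be careful — but the localized reaction means I concentrate on $B_L$.

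Actually for $m \le 1$ the cleanest might be to build an explicit self-similar blowing-up subsolution supported near the origin, entirely inside $B_L$. Since inside $B_L$ the equation is $u_t = \Delta u^m + u^p$ (pure reaction-diffusion), and blow-up for this equation with $p > 1$ is classical.

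The key point: for the equation $u_t = \Delta u^m + u^p$ with $p > 1$ in a bounded domain $B_L$, large data produce blow-up. One constructs a subsolution of the form $\underline{u}(x,t) = (T-t)^{-1/(p-1)} W((T-t)^{-\sigma} x)$ or uses the comparison with an ODE after controlling diffusion via a smooth cutoff compactly supported in $B_L$.

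**Expected main obstacle:**

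The global existence half (Part 1) is genuinely easy — the constant supersolution from the ODE handles it immediately, which is probably why the paper calls it "easy to see."

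The real content is Part 2 (blow-up for $p > 1$). The obstacle here is: **showing that the localized reaction ($L < \infty$) is still strong enough to cause blow-up**. Diffusion tends to flatten the solution and carry mass out of $B_L$ where the reaction vanishes. The trick is that for $p > 1$, if we start with sufficiently large, concentrated initial data inside $B_L$, the reaction wins locally before diffusion can disperse enough mass.

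I would construct a compactly-supported-in-$B_L$ subsolution. A clean approach: take initial data very large and concentrated at the origin; use a self-similar subsolution of the form
$$\underline{u}(x,t) = (T-t)^{-\frac{1}{p-1}} F\!\left(\frac{|x|}{(T-t)^{\sigma}}\right)$$
with appropriate $\sigma$ and a compactly supported profile $F$. For $t$ near $T$ the support $(T-t)^\sigma \cdot \text{supp}(F)$ stays inside $B_L$, so $a(x) = 1$ on the support and the reaction term is fully active. Choosing $F$ to make $\underline{u}$ a subsolution of $u_t = \Delta u^m + u^p$ with the right sign, comparison forces $u$ to blow up by time $T$.

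Since the paper explicitly calls this result easy and uses it just to fill a gap in the literature for the $m < 1$ case, I'd expect the actual proof to cite the constant-supersolution argument for global existence and a brief construction or citation for the blow-up, rather than a detailed self-similar analysis.
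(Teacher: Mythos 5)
Your global-existence half is correct and is essentially the paper's own argument: the paper compares with the spatially constant supersolution $\overline u(t)=Me^t$, $M=\|u_0\|_\infty$, while you use the exact ODE solution of $U'=U^p$; both are the same idea and work for any $m$. The gap is in the blow-up half, which is where the actual content of the lemma lies. You name the paper's tool — Kaplan's eigenfunction method on $B_L$ — but then abandon it with the remark that ``since $m\le1$, I should be careful,'' and retreat to an unconstructed self-similar subsolution. This misses precisely the point: $m\le 1<p$ is exactly what makes Kaplan's method work. Taking $(\lambda_1,\varphi_1)$ the first Dirichlet eigenpair of $-\Delta$ in $B_L$ with $\int_{B_L}\varphi_1=1$ and $J(t)=\int_{B_L}u\varphi_1$, Green's formula (the boundary term $-\int_{\partial B_L}u^m\partial_\nu\varphi_1$ is nonnegative because $\partial_\nu\varphi_1\le0$) gives
\begin{equation*}
J'(t)\ge-\lambda_1\int_{B_L}u^m\varphi_1+\int_{B_L}u^p\varphi_1\ge-\lambda_1 J^m(t)+J^p(t),
\end{equation*}
where the second step uses Jensen's inequality twice, in opposite directions: concavity of $s\mapsto s^m$ (this is where $m\le1$ enters, favorably, not as an obstacle) and convexity of $s\mapsto s^p$ (this is where $p>1$ enters). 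For initial data with $J(0)>\lambda_1^{1/(p-m)}$ one gets $J'\ge CJ^p$ and finite-time blow-up.

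Your fallback route is not a completed argument. You assert the existence of a compactly supported profile $F$ making $(T-t)^{-1/(p-1)}F\bigl(|x|(T-t)^{-\sigma}\bigr)$ a subsolution, but for $m\le1$ this is genuinely delicate: fast diffusion has infinite speed of propagation, so a compactly supported subsolution must be checked at the edge of its support, where $\Delta F^m$ produces a distributional \emph{negative} singular term (the wrong sign for a subsolution) unless $F^m$ vanishes with zero slope; one must then verify the interior ODE inequality for such a profile, which you never do. So as written, the blow-up half of your proposal is a plan rather than a proof, and the plan you discarded is the one that closes in two lines.
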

\begin{proof}
  The case $p\le1$  follows by comparison with the supersolution
$$
\overline u(t)=Me^t,\qquad M=\|u_0\|_\infty.
$$

On the other hand, for $p>1$ we can apply Kaplan's method to obtain blow-up solutions if the initial value is large. The method works precisely because $m\le1<p$. To that purpose let $(\lambda_1,\varphi_1)$ be the first eigenvalue and eigenfunction of the Laplacian in the ball $B_L$, normalized such that $\int_{B_L}\varphi=1$. Let $J(t)=\int_{B_L}u\varphi$. We have
  $$
  J'(t)\ge-\lambda_1\int_{B_L}u^m\varphi_1+\int_{B_L}u^p\varphi_1\ge-\lambda_1(\int_{B_L}u\varphi_1)^m+(\int_{B_L}u\varphi_1)^p.
  $$
  If the initial value is large so as to satisfy $\int_{B_L}u_0\varphi>\lambda_1^{\frac1{p-m}}$, then $J'(t)\ge CJ^p(t)$, which means that $J$ (and thus $u$) blows up in finite time.
\end{proof}

Let us now concentrate in the limit case $p=p_0$, and study if the solutions are global or not. By the above lemma we only have to consider the case $m>1$. The unidimensional case is solved in \cite{FerreiradePabloVazquez}, and the solutions are global. The case $N\ge2$ is considered in \cite{Liang}, but the proof of blow-up presented in that paper fails when $N\ge3$ and $L$ is small, precisely by the existence of stationary solutions, see Theorem~\ref{rem-chino}.

\begin{teo}
  \label{teo-bup-p=m} Assume  $p=m$ and let $u$ be the solution to problem~\eqref{eq.principal}.
\begin{enumerate}
  \item If $N=2$ then $u$ blows up in a finite time if $m>1$ and it is global unbounded if  $m\le 1$.
  \item For $N\ge 3$ the behaviour of $u$ depends on $L$:
  \begin{enumerate}
    \item[i)] $u$ is global and bounded if $L\le L^*$, assuming also the behaviour \eqref{extrainfinity} when $L=L^*$;
    \item[ii)] For $L>L^*$ the function $u$ blows up in a finite time if $m>1$ and it is global unbounded if  $m\le 1$.
    \end{enumerate}
\end{enumerate}
Moreover, the global unbounded solutions grow up in some ball of positive radius.
\end{teo}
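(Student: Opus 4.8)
The plan is to split the statement into the boundedness side ($N\ge3$, $L\le L^*$) and the unboundedness side ($L>L^*$, which for $N=2$ means every $L>0$ since $L^*(2)=0$), and within the latter to treat $m>1$ (finite-time blow-up) and $m\le1$ (grow-up, with the solution global by Lemma~\ref{lem-exp}) by different mechanisms. For boundedness I would use the stationary solutions of Theorem~\ref{teo-stat} as supersolutions. Since $p=m$ the profile $w=u^m$ solves the \emph{linear} equation $\Delta w+aw=0$, so positive multiples of a stationary profile are again stationary profiles. For $L<L^*$ take the profile with $\lim_{r\to\infty}w=k>0$; it is bounded below by a positive constant, so a large multiple $W=(\lambda w)^{1/m}$ dominates $u_0\in L^\infty$ pointwise and comparison gives $u\le W$. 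For $L=L^*$ the only profile has $k=0$ and decays like $r^{2-N}$, i.e.\ $W\sim r^{(2-N)/m}$; here condition~\eqref{extrainfinity} is exactly what is needed so that a large multiple of $W$ still dominates $u_0$ at infinity (and trivially on compacts), again yielding $u\le W$.

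For $m>1$ (blow-up) the key idea is to test the equation against the principal eigenfunction $\phi$ of the whole-space operator $-\Delta-a$, that is, the function $\varphi_{\lambda_0}$ of Proposition~\ref{prop-lambda0}, which exists precisely when $L>L^*$ (and for all $L>0$ when $N=2$) and satisfies $\Delta\phi+a\phi=\mu\phi$ with $\mu=\lambda_0>0$ and exponential decay. Setting $J(t)=\int_{\mathbb{R}^N}u\,\phi$ (finite since $u\in L^1$ and $\phi$ decays), an integration by parts whose boundary terms vanish by the decay of $\phi$ gives $J'(t)=\int u^m(\Delta\phi+a\phi)=\mu\int u^m\phi$. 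Normalizing $\int\phi=1$ and using Jensen's inequality (valid since $m>1$) yields $J'\ge\mu J^m$, so $J$, and hence $u$, blows up in finite time. The decisive point, and the reason this improves on the usual Kaplan argument with the first eigenfunction of $-\Delta$ on $B_L$ (which only works when $L$ exceeds the length $L_1$ where the first Dirichlet eigenvalue equals $1$), is that the eigenfunction of the full operator $-\Delta-a$ captures the reaction for \emph{every} $L>L^*$.

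For $m\le1$ (grow-up) I would build a growing subsolution by separation of variables. On a large ball $B_R$ let $\phi_R>0$ be the principal Dirichlet eigenfunction of $-\Delta-a$, whose eigenvalue satisfies $\Lambda_1^R\downarrow\Lambda_1<0$ by domain monotonicity, so $\Lambda_1^R<0$ for $R$ large; put $\mu_R=-\Lambda_1^R>0$, $\Psi=\phi_R^{1/m}$, extended by zero. Then $\underline u=g(t)\Psi$ gives $\underline u_t=g'\Psi$ and $\Delta\underline u^m+a\underline u^m=g^m\mu_R\Psi^m$, so $\underline u$ is a subsolution as soon as $g'\le\mu_R\Psi^{m-1}g^m$; since $m\le1$ we have $\Psi^{m-1}\ge\|\Psi\|_\infty^{m-1}>0$ on $B_R$, and it suffices to take $g'=\kappa g^m$ with $\kappa=\mu_R\|\Psi\|_\infty^{m-1}$, while the zero extension is admissible across $\partial B_R$ because $\Psi^m=\phi_R$ has a downward gradient jump there. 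As $m\le1$, $g(t)\to\infty$. Because the diffusion is linear or fast, $u(\cdot,t_0)$ is strictly positive on $\overline{B_R}$ for any $t_0>0$, so a small $g(0)$ makes $\underline u(\cdot,t_0)\le u(\cdot,t_0)$; comparison then forces $u\ge g(t)\Psi\to\infty$ uniformly on compact subsets of $B_R$, which proves both grow-up and the final ``moreover'' assertion.

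The main obstacle I expect is the blow-up proof for $m>1$ in the range $L^*<L\le L_1$: this is exactly where Liang's argument breaks (cf.\ Remark~\ref{rem-chino}), and the resolution, testing against the whole-space principal eigenfunction of $-\Delta-a$ rather than the first eigenfunction of $-\Delta$ on $B_L$, together with the justification that the integration-by-parts boundary terms vanish, is the crux. Secondary technical points are the legitimacy of the zero-extension as a weak subsolution in the $m\le1$ case (and the strict positivity of $u(\cdot,t_0)$ on $\overline{B_R}$), and the decay matching at $L=L^*$ that makes \eqref{extrainfinity} precisely the sharp hypothesis for boundedness.
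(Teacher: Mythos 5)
Your boundedness half ($N\ge3$, $L\le L^*$) is exactly the paper's argument: comparison with the stationary supersolutions of Theorem~\ref{teo-stat}, with \eqref{extrainfinity} supplying the domination at infinity when $L=L^*$, where the only profile decays like $|x|^{(2-N)/m}$. For the unbounded half your route is genuinely different and, as far as I can check, correct. The paper puts below $u$ the solution $w$ of the Dirichlet problem \eqref{eq.dirichlet3} on a ball $B_{2R}$ (with $R$ the zero of the stationary profile $g_A$, which exists precisely when $L>L^*$), proves that $w$ cannot stay bounded by combining the Lyapunov functional \eqref{eq.lyapunov} with the nonexistence of Dirichlet stationary states (Theorem~\ref{estacionarias-Dirichlet}), and then obtains finite-time blow-up for $m>1$ from the Levine--Sacks concavity argument, which yields the differential inequality \eqref{eq.J'}, $J'\ge CJ^{2m/(m+1)}$. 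You replace this energy/stabilization machinery by two spectral comparison arguments: for $m>1$, Kaplan's method with the whole-space bound state $\varphi_{\lambda_0}$ of $-\Delta-a$ from Theorem~\ref{prop-lambda0} (which exists exactly for $L>L^*$, and for all $L>0$ when $N=2$) plus Jensen, giving $J'\ge\lambda_0J^m$ and blow-up for every nontrivial datum; for $m\le1$, a growing separated-variables subsolution $g(t)\phi_R^{1/m}$ built from the principal Dirichlet eigenfunction of $-\Delta-a$ on a large ball, whose eigenvalue is negative for $R$ large by domain monotonicity together with the existence of the bound state. Both proofs hinge on the same dichotomy at $L^*$, but yours makes its spectral meaning transparent, avoids the omega-limit argument, and correctly repairs Liang's gap in the range $L^*<L\le L_1$ where Kaplan on $B_L$ with the eigenfunction of $-\Delta$ fails; the technical points you flag are real but standard (for $m>1$ one has $u^m\le\|u\|_\infty^{m-1}u\in L^1$ while $\varphi_{\lambda_0}$ and $\nabla\varphi_{\lambda_0}$ decay exponentially, so a cutoff argument kills the boundary terms, and the zero extension of the Dirichlet eigenfunction is a distributional subsolution because of the sign of its normal derivative). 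What the paper's route buys in exchange is quantitative: inequality \eqref{eq.J'} holds for all $m$ and is reused in Remark~\ref{rem.tasaBR} to get the lower grow-up rate $u(0,t)\ge ct^{1/(1-m)}$, and the localization of the unboundedness inside $B_R$ is proved there directly; your $m\le1$ subsolution recovers the same rate and the ``moreover'' clause on compacts of $B_R$, so nothing essential is lost.
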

\begin{proof}
Let $N\ge 3$ (the case $N=2$ is similar to the case $N\ge 3$ and $L>L^*$) and
let us consider the function $g_A=w^{1/m}$ where $w$ es defined in~\eqref{stat-w}.
By Theorem~\ref{teo-stat} we have that for $L<L^*$ taking $A$ large $g_A$ is a supersolution, bigger than $u_0$ at $t=0$ and thus bigger than $u(\cdot, t)$ at any time. This is clear when $L<L^*$, since the stationary solution is strictly positive. If $L=L^*$ we use the behaviour \eqref{extrainfinity}.

On the contrary, when $L>L^*$ the function $g_A$ vanishes at some point $R>0$ independent of $A$. Then, taking $A$ small enough $u$ is a supersolution of the problem
\begin{equation}\label{eq.dirichlet3}
\left\{
\begin{array}{ll}
w_t=\Delta w^m+a(x) w^m,\qquad & |x|<2R, t>0,\\
w(x,t)=0,\qquad & |x|=2R, t>0,\\
w(x,0)=w_0(x),
\end{array}\right.
\end{equation}
where $w_0(x)=g_A(x)$ for $|x|\le R$ and $w_0(x)=0$ in $R\le |x|\le 2R$. Notice that if the initial datum $u_0$ were not positive, by the penetration property of the solutions to the pure diffusion equation without reaction, there exists a time $t_0$ such that the support of $u(\cdot,t_0)$ contains the ball $B_R$ and then taking $A$ small enough $u(x,t_0)\ge w_0(x)$. Thus, again by comparison $u(x,t+t_0)\ge w(x,t)$ for $t\ge 0$.

\

We claim that $w$ is unbounded in $B_R(0)$, and moreover it blows up in a finite time if $m>1$. This then gives that $u$ is global unbounded when $m\le1$ (at least in $B_R(0)$), and blows up if $m>1$.

\

In order to prove the claim we note that problem \eqref{eq.dirichlet3} has no stationary solution, see Theorem~\ref{estacionarias-Dirichlet}. Moreover, since $w_0$ is a radial decreasing function which satisfies $\Delta w_0^m+a(x)w_0^m\ge 0$, we get a radial decreasing solution which is increasing in time.
This implies that the solution can not go to zero and then it must be unbounded. Indeed, let us consider the Lyapunov functional
\begin{equation}\label{eq.lyapunov}
E_w(t)=\frac12 \int_{|x|<2R} |\nabla w^m|^2\,dx-\frac12 \int_{|x|<2R} a(x) w^{2m}\,dx.
\end{equation}
It is nonincreasing,
\begin{equation}\label{eq.lyapunov-decrece}
E_w'(t)=-\frac{4m}{(m+1)^2} \int_{|x|<2R} \left((w^{\frac{m+1}2})_t\right)^2\,dx\le0,
\end{equation}
and also $E_w$ is bounded from below provided $w$ is bounded. Therefore, by standard arguments $w$ converges (up to a subsequence of times) to an stationary solution. Ruled out the possibility to go to the only stationary solution, the trivial one, this implies that $w$ is unbounded, that is,
$$
\limsup_{t\to T} \|w(\cdot,t)\|_\infty =\infty\quad \mbox{for some } T\le\infty
$$
Even more, $w$ is unbounded in $B_R(0)$, since if we suppose that $w(x,t)$ is bounded for  $|x|=R_1<R$, then for $M$ large enough $w$ is a subsolution to
$$
\left\{
\begin{array}{ll}
z_t=\Delta z^m+z^m, \quad & |x|<R_1,\, 0<t<T,\\
z(x,t)=M, & |x|=R_1,\, 0<t<T,\\
z(x,0)=M, & |x|<R_1.
\end{array}\right.
$$
On the other hand, $g_A$ is a stationary supersolution for $A$ large. Then, by comparison $w$ is bounded. This contradiction implies that $w$ is unbounded in $B_R(0)$.

On the other hand, using the concavity argument of  \cite{LS} we obtain that the function
$$
J(t)=\frac{1}{m+1}\int_{|x|<2R} w^{m+1}(x,t)\,dx
$$
satisfies
$$
\begin{array}{rl}
\displaystyle J'(t)&\displaystyle=\int w^m w_t=\int w^m\Delta w^m+\int a(x)w^{2m}=-2E_w(t), \\
\displaystyle(J'(t))^2&\displaystyle=\left(\int w^{\frac{m+1}2}w^{\frac{m-1}2}w_t\right)^2\le\frac4{(m+1)^2}\int w^{m+1}\int \left((w^{\frac{m+1}2})_t\right)^2, \\
\displaystyle J''(t)&\displaystyle=-2E'_w(t)=\frac{8m}{(m+1)^2} \int \left((w^{\frac{m+1}2})_t\right)^2,
\end{array}
$$
and finally
$$
(J'(t))^2\le \frac{m+1}{2m}J(t)J''(t).
$$
Since $J(t)$ is unbounded $J'(t_0)>0$ at some time $t_0$. Moreover $E_w$ is decreasing, thus $J'(t)>0$ for every $t>t_0$. Therefore we can integrate the above inequality to get
\begin{equation}\label{eq.J'}
J'(t)\ge C J^{\frac{2m}{m+1}}(t).
\end{equation}
Let us observe that if $m>1$ the exponent $\frac{2m}{m+1}>1$, then  $J$ (and therefore $w$) blows up in a finite time $T<\infty$.

\end{proof}

This completes the proof of Theorem~\ref{teo-global-p0}.

\begin{rem}\label{rem.tasaBR}
Notice that, since $w$ is radially nonincreasing, inequality \eqref{eq.J'}  for $m<1$ gives the lower estimate $u(0,t)\ge ct^{\frac1{1-m}}$. As we will see this estimate can be extended to the whole $\mathbb{R}^N$, see Lemma \ref{lem-tasa-p=m}.
\end{rem}

\

We now consider exponents $p\le p_0$ and prove Theorem~\ref{teo-GUP}. We start with an easy result.

\begin{teo}\label{teo.m-menor-1}  Let $u$ be a global solution with $p< m$.
Then $u$ is  bounded if and only if $N\ge3$.
\end{teo}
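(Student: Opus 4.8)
The plan is to prove the two implications separately, using the stationary solutions from Section~\ref{sect-special}, and to build the whole dichotomy on the asymptotics \eqref{eq.A-R}, which show that the height of the Dirichlet stationary states on $B_R$ stays bounded as $R\to\infty$ when $N\ge3$ but diverges when $N=2$.

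\emph{Boundedness when $N\ge3$.} Since $p<m$ means $\gamma=p/m<1$, Theorem~\ref{teo-stat}(1) provides, for every $k\ge0$, a radial stationary solution of the Cauchy equation with limit $k$ at infinity; being radially decreasing it lies everywhere above $k$. Choosing $k=\|u_0\|_\infty$ we obtain a bounded stationary solution $V$ with $V\ge k\ge u_0$. As a stationary solution $V$ is in particular a supersolution, so comparison gives $u(\cdot,t)\le V$ for all $t>0$, and hence $u$ is bounded.

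\emph{Unboundedness when $N=2$.} Here no positive stationary solution of the Cauchy problem exists (Theorem~\ref{teo-stat}), so we exhaust $\mathbb{R}^2$ by balls. For each $R>L$ let $\Psi_R$ be the solution of the $u$-equation associated with the Dirichlet stationary solution of \eqref{eq.dirichlet} on $B_R$, so that $\Psi_R$ is positive and radially decreasing, vanishes on $\partial B_R$, and $\Psi_R(0)=A(R)^{1/m}$. By \eqref{eq.A-R} for $N=2$ we have $A(R)\to\infty$, whence $\Psi_R(0)\to\infty$ as $R\to\infty$. The key observation is that, because $p<m$, for every $0<\varepsilon<1$ the rescaled function $\varepsilon\Psi_R$ is a stationary subsolution on $B_R$: using $\Delta\Psi_R^m=-a\Psi_R^p$ one computes
\begin{equation*}
\Delta(\varepsilon\Psi_R)^m+a(\varepsilon\Psi_R)^p=a\Psi_R^p\bigl(\varepsilon^p-\varepsilon^m\bigr)\ge0,
\end{equation*}
with equality outside $B_L$. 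By the penetration property there is a time $t_0$ at which $u(\cdot,t_0)$ is bounded below by a positive constant on the compact set $\overline{B_R}$; choosing $\varepsilon$ small we get $\varepsilon\Psi_R\le u(\cdot,t_0)$ on $B_R$, while $\varepsilon\Psi_R=0\le u$ on $\partial B_R$.

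Now let $z$ solve the Dirichlet problem on $B_R$ with reaction and datum $z(\cdot,0)=\varepsilon\Psi_R$. Since $z$ is a subsolution of the problem satisfied by $u$ on $B_R\times(t_0,\infty)$ (same equation, smaller initial and boundary data), comparison yields $z(\cdot,t-t_0)\le u(\cdot,t)$. Because the datum is a subsolution, $z$ is nondecreasing in time, and it is bounded above by the stationary supersolution $\Psi_R$; hence $z(\cdot,t)\nearrow\bar\Psi$, a positive stationary solution of the Dirichlet problem, which by the uniqueness of the positive solution of this sublinear ($\gamma<1$) problem equals $\Psi_R$. Therefore
\begin{equation*}
\limsup_{t\to\infty}u(0,t)\ge\lim_{t\to\infty}z(0,t-t_0)=\Psi_R(0)=A(R)^{1/m}.
\end{equation*}
Letting $R\to\infty$ gives $\limsup_{t\to\infty}\|u(\cdot,t)\|_\infty=\infty$, so $u$ is unbounded. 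Since the paper assumes $N\ge2$, the two implications establish that $u$ is bounded if and only if $N\ge3$.

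The main obstacle is the unboundedness direction for $N=2$. The difficulty is conceptual: if $u$ is bounded we cannot place a tall subsolution beneath it, yet we need to detect that $u$ rises to arbitrarily large heights. The resolution exploits that $p<m$ makes the origin a repelling state and the Dirichlet stationary solution an attractor from below, so the \emph{tiny} subsolution $\varepsilon\Psi_R$ climbs all the way up to the \emph{tall} profile $\Psi_R$; and the non-compactness of $\mathbb{R}^2$ is handled by letting $R\to\infty$, using precisely the logarithmic divergence of $A(R)$ in \eqref{eq.A-R} that is special to dimension two. Secondary points to verify carefully are the penetration/positivity statement (to place $\varepsilon\Psi_R$ below $u$ at time $t_0$) and the uniqueness/attractivity of $\Psi_R$ among positive Dirichlet stationary solutions.
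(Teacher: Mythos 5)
Your proof is correct and follows essentially the same route as the paper: boundedness for $N\ge3$ via the large stationary supersolutions of Theorem~\ref{teo-stat}, and unboundedness for $N=2$ by placing below $u$ an increasing-in-time solution of the Dirichlet problem on $B_R$ that converges to the stationary state whose height $A(R)^{1/m}$ diverges as $R\to\infty$ by \eqref{eq.A-R}. The only differences are cosmetic: the paper frames the $N=2$ case as a contradiction and takes as initial subsolution the smaller-ball stationary solution $W_{R_0}$ extended by zero (compared with $u_0$ at $t=0$), whereas you use the scaled profile $\varepsilon\Psi_R$ (a subsolution precisely because $p<m$) positioned below $u$ at a later time via the penetration property.
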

\begin{proof}
If $N\ge3$ we use the fact that there exist large stationary solutions. For $N=2$ we argue by contradiction, assuming $u(x,t)\le K$ for every $x\in\mathbb{R}^2$ and $t>0$.
We consider then the Dirichlet problem
\begin{equation}\label{eq.dirichlet2}
\left\{
\begin{array}{ll}
w_t=\Delta w^m+a(x) w^p,\qquad & |x|<R,\;t>0,\\
w(x,t)=0,\qquad & |x|=R, \;t>0,\\
w(x,0)=w_0(x),& |x|<R,
\end{array}\right.
\end{equation}
for some $R>0$. By \eqref{eq.A-R} we can take $R=R_1$ large such that the corresponding stationary solution $W_{R_1}$ satisfies $W_{R_1}(0)>K$. On the other hand, if we take $R=R_0<R_1$ and $w_0(x)=W_{R_0}(x)$ for $0\le |x|\le R_0$, $w_0(x)=0$ for $R_0\le |x|\le R_1$, we obtain a bounded increasing in time solution $w$ to problem \eqref{eq.dirichlet2}. By standard arguments $\lim_{t\to\infty}w(x,t)=W_{R_1}(x)$.

On the other hand if  $R_0$ is so small in order to have $w_0(x)\le u_0(x)$ for $|x|<R_1$, comparison implies $w(x,t)\le u(x,t)$ for $|x|<R_1$ and every $t>0$. This is a contradiction.
\end{proof}

\begin{rem}\label{rem-pmenorm}
Notice that for $N=2$ and  $p< m$ the solution $u$ must be unbounded in any ball. Indeed, by comparison we can assume that $u$ is radial. Arguing as in the case $p=m$, if  $u$ is bounded on $|x|=R_1$ for some $R_1>0$,  we can put  above $u$ a large stationary solution, so $u$ can not grow-up.
\end{rem}

In order to complete the proof of Theorem \ref{teo-GUP} it only remains to consider the range $m<p\le1$ and show that  there exist unbounded solutions.

\begin{teo}\label{teo.m-menor-1}   If $m<p\le1$   there exist global unbounded solutions to problem \eqref{eq.principal}.
\end{teo}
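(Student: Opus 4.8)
The plan is to exhibit a single unbounded solution by choosing the initial datum to be a subsolution, so that $t\mapsto u(\cdot,t)$ is nondecreasing, and then to show that this monotone orbit cannot remain bounded. Globality is free: since $m<p\le1$ forces $m<1$, we have $p\le1=p_0$ and every solution is global by Lemma~\ref{lem-exp}. I would take $u_0$ radial, nonincreasing, smooth, with an integrable tail, and satisfying $\Delta u_0^m+a(x)u_0^p\ge0$ weakly; comparison with the time–translates of $u$ then makes $u(\cdot,t)$ nondecreasing in $t$. The point where $m<p$ enters is precisely in producing such subsolutions of large amplitude: outside $B_L$ it suffices that $u_0^m$ be radially subharmonic (e.g. $u_0^m\sim c\,|x|^{-\sigma}$ with $\sigma>\max\{N-2,mN\}$, which is subharmonic and has an $L^1$ tail), while inside $B_L$ the reaction must dominate the diffusion; since the reaction scales like $c^{p/m}$ and the diffusion like $c$, and $p/m>1$, any sufficiently large amplitude $c$ yields $\Delta u_0^m+u_0^p\ge0$ there. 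In this way one gets monotone orbits with $u_0$ as large as desired on a prescribed ball.

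If such a monotone $u$ were bounded, then $u(\cdot,t)\nearrow U_\infty$ pointwise, and interior parabolic regularity shows that $U_\infty$ is a positive, radial, nonincreasing \emph{stationary} solution of \eqref{eq.principal} with $U_\infty\ge u_0$. This is where the answer depends on $N$ and $p$. For $N=2$ there is no positive stationary solution at all (Theorem~\ref{teo-stat}), so the mere existence of $U_\infty$ is already a contradiction and $u$ grows up. For $N\ge3$ I would use that every stationary solution is radially decreasing and that the family is uniformly bounded at a fixed radius $R_0>L$, say $M(R_0):=\sup_A w_A(R_0)<\infty$: when $p<p_S$ this is immediate because the value at the origin satisfies $A<A^*<\infty$ (Theorem~\ref{teo-stat}(2) and its proof), and when $p\ge p_S$ it follows from the behaviour of $w_A$ as $A\to\infty$ described in the same proof. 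Choosing the subsolution datum with $u_0(R_0)>M(R_0)$ then forces $U_\infty(R_0)\ge u_0(R_0)>M(R_0)$, which is impossible; hence $u$ is unbounded. (The case $N=1$ is contained in \cite{FerreiradePablo}, and is handled the same way since no stationary solution exists.)

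The main obstacle I anticipate is the passage to the limit $U_\infty$ and its identification as a bona fide stationary solution on the whole space: the domain is unbounded, so one cannot simply invoke compactness of the orbit. Monotonicity in time secures the pointwise limit, and the boundedness hypothesis keeps $U_\infty$ finite, but upgrading this to $C^2_{\mathrm{loc}}$ convergence and guaranteeing that $U_\infty$ is positive everywhere (so that Theorem~\ref{teo-stat} applies) requires local regularity for the fast–diffusion operator together with the penetration property already invoked in Theorem~\ref{teo-bup-p=m}. A secondary technical point is the uniform bound $M(R_0)<\infty$ in the supercritical range $p\ge p_S$, where the stationary family is no longer bounded at the origin; should this prove delicate, it can be bypassed by comparison with the Dirichlet problem on $B_{2R}$, which for $\gamma=p/m\ge\gamma_S$ possesses \emph{no} stationary solution (Theorem~\ref{estacionarias-Dirichlet}). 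Then, exactly as in Theorem~\ref{teo-bup-p=m}, a monotone Dirichlet subsolution $w$ (controlled through the energy identity \eqref{eq.lyapunov}--\eqref{eq.lyapunov-decrece}) must itself be unbounded, and since $w\le u$ on $B_{2R}$ it forces grow-up of $u$.
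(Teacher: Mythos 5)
Your proposal is correct in substance, but it takes a genuinely different route from the paper's proof. The paper argues by direct comparison with an explicit grow-up subsolution in separated variables: on a ball $B_R\subset B_L$ it takes $\underline z(x,t)=\psi(t)\varphi(x)$, where $\varphi$ is a Dirichlet stationary profile (Section~\ref{sect-stationary}) for an auxiliary exponent $q$ with $m<q<p_S$, $q\le 1$, and $\psi'=(\psi^q-\psi^m)/\varphi^{1-q}(0)\sim\psi^q$ grows up; one takes $q=p$ when $p<p_S$, while if $p_S\le p\le 1$ the solution is first forced to satisfy $u\ge1$ in $B_L$ (via a stationary subsolution $\lambda z$), so that $u^p\ge u^q$ and $u$ is still a supersolution of \eqref{eq.CauchyDirichlet}. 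This is fully constructive, needs no asymptotic dynamics, and yields as a bonus the natural rate $t^{1/(1-q)}$ inside $B_L$ (the same subsolution is reused in Lemma~\ref{lem.bola}). Your scheme is instead the monotone-dynamics one: subsolution initial data produce orbits nondecreasing in time; a bounded monotone orbit would converge, by interior regularity (legitimate here, since $m<1$ and $u\ge u_0>0$ keeps the equation locally uniformly parabolic), to a positive radial stationary solution of the Cauchy problem, which is excluded for $N=2$ by Theorem~\ref{teo-stat} and for $N\ge3$ by a uniform bound on the stationary family at a fixed radius $R_0>L$. What your route buys is unboundedness for every subsolution datum, with no auxiliary exponent and a uniform treatment of $p<p_S$ and $p\ge p_S$; what it costs is the convergence-to-equilibrium machinery (which the paper only deploys on bounded domains, in Theorem~\ref{teo-bup-p=m}) and the absence of any rate information.

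Two details you should pin down to close the argument. First, the bound $M(R_0)=\sup_A w_A(R_0)<\infty$ in the range $p\ge p_S$ is not literally ``described in the paper's proof,'' which tracks only $c_1(A)=\lim_{r\to\infty}w_A(r)$ and not the value at a finite radius; it does follow, however, in one line from radial monotonicity and the decay of $v$: $w_A(R_0)\le w_A(L)=Av\bigl(A^{\frac{\gamma-1}2}L\bigr)\le CL^{-\frac2{\gamma-1}}$ uniformly in $A$, since $v(r)\le Cr^{-\frac2{\gamma-1}}$ when $\gamma\ge\gamma_S$ (for $\gamma<\gamma_S$ one simply has $w_A(R_0)\le A\le A^*$). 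Second, your subsolution datum needs care at the interface $|x|=L$: either arrange the radial derivative of $u_0^m$ to jump upward there (weak subsolution), or place the transition in an annulus strictly inside $B_L$, where the reaction, of size $c^{p/m}$, absorbs the $O(c)$ Laplacian for $c$ large --- the same scaling you already invoke. The analogous point arises in your Dirichlet bypass for $p\ge p_S$: the paper's datum $w_0=g_A$ works only for $p=m$, so you must build a compactly supported radial subsolution for exponent $p>m$ (a profile with quadratic contact at a free boundary inside $B_L$, scaled large, does the job). With these points fixed, your proof is complete.
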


\begin{proof}
Consider the problem, for some $m<q<p_S$ (and $q\le1$), $R>0$,
\begin{equation}\label{eq.CauchyDirichlet}
\left\{
\begin{array}{ll}
z_t=\Delta z^m+a(x) z^q,\qquad &  |x|<R,\; t>0,\\
z(x,t)=0,&  |x|=R,\; t>0,\\
z(x,0)=z_0(x)&  |x|<R.
\end{array}\right.
\end{equation}
Observe that if $R<L$ the function $\underline z(x,t)=\psi(t)\varphi(x)$ is a subsolution
if $\varphi$ is a stationary solution (see Section~\ref{sect-stationary}) and $\psi$ satisfies
$$
\psi'=\frac{\psi^q-\psi^m}{\varphi^{1-q}(0)},\quad \psi(0)>1.
$$
In fact, since $\psi(t)>1$ for every $t>0$ and $\varphi(x)\le\varphi(0)$ for every $x$, we have
$$
\begin{array}{rl}
\underline z_t-\Delta \underline z^m-a(x) \underline z^q&=\psi'\varphi-(\psi^m-\psi^q)\varphi^q \\ [3mm] &=-\left(1-\left(\dfrac\varphi{\varphi(0)}\right)^{1-q}\right)(\psi^q-\psi^m)\varphi^q\le0
\end{array}$$
for every $|x|<R$, $t>0$.
Since $\psi'\sim\psi^q$, the function $\psi$ tends to infinity and $\underline z$ grows up in $B_R$.

Assume first $m<p<p_S$. Then our solution $u$ is a supersolution to problem~\eqref{eq.CauchyDirichlet} with $q=p$. We conclude grow-up for any initial value above $\underline z(x,0)$. If on the contrary $p_S\le p\le1$, we have that our solution is a supersolution to problem~\eqref{eq.CauchyDirichlet} for any $q<p_S$ provided that $u(x,t)\ge 1$ in $B_L$. Thus, as before, we have grow-up for large initial data. In order to prove that there exists  a solution with $u(x,t)\ge 1$ in $B_L$ we compare with the subsolution $\underline v(x)=\lambda z(x)$, where $z$ is a stationary solution of \eqref{eq.CauchyDirichlet} with some $R>L$ and $\lambda$ is large enough.
\end{proof}

\begin{rem}
  \label{rem-vanish}
  It is well known that if $0<m<\frac{N-2}N$ with $N\ge3$ there exist solutions to the very fast diffusion equation
  $$
  v_\tau=\Delta v^m
  $$
that vanish identically at a finite time $\tau_0$, which depends on the initial value, see for instance \cite{V2}. Take now $v(\cdot,0)$ be such that $\tau_0<1/(1-m)$. Then
$$
w(x,t)=e^{t}v(x,\tau),\quad \tau=\frac{1-e^{-(1-m)t}}{1-m},
$$
is a supersolution to our problem with $p=1$ and it satisfies
$$
w(x,t_0)\equiv0 \quad\text{for every } x\in\mathbb{R}^N,
$$
where $t_0=\frac1{1-m}\log(1-(1-m)\tau_0)$. Therefore, in the case of linear reaction and superfast diffusion in problem \eqref{eq.principal}, any initial value $u_0\le v(\cdot,0)$ produces a solution with finite time extinction.
\end{rem}

\section{Grow-up set}\label{sect-gupset}\setcounter{equation}{0}

The main objective of this section is to study if the unbounded global solutions to problem~\eqref{eq.principal} tend to infinity  for every point $x\in\mathbb{R}^N$. We first remind that the case $p<m$ (which implies $N=2$) follows directly from Remark \ref{rem-pmenorm}, since we can put below the solution a subsolution with grow-up set as large as we want. We therefore deal here with the upper range $m\le p\le1$.

If $p>m$ we also assume that the initial value is a radial function and so it is the solution. We  denote the solution $u(r,t)$, $r>0$, $t>0$, since no confusion arises. We impose the additional condition in that case
\begin{equation}\label{eq.tasah}
\lim_{t\to\infty} u(R,t)=\infty,\quad \mbox{for some } R\in[0, L].
\end{equation}
Notice that if $u$ is bounded in $B_L$ (the region where the reaction takes place), then $u$ is bounded.

Next we prove that under the previous hypotheses the grow-up set is the whole $\mathbb{R}^N$, that is, we prove Theorem \ref{teo-sets}. We divide the proof into several lemmas.

\begin{teo}\label{teo-0-eps}
Assume $R>0$ in \eqref{eq.tasah}. Then $\lim\limits_{t\to\infty}u(r,t)=\infty$ for every $r\ge R$.
\end{teo}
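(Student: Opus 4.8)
The starting point is that, since the reaction term $a(x)u^p$ is nonnegative, $u$ is a supersolution of the pure diffusion equation,
\[
u_t=\Delta u^m+a(x)u^p\ge \Delta u^m\qquad\text{in }\mathbb{R}^N\times(0,\infty).
\]
The plan is therefore to place \emph{below} $u$, in an exterior region of the sphere $\{|x|=R\}$, a family of subsolutions of $\partial_t v=\Delta v^m$ whose boundary values on $\{|x|=R\}$ are fed with the diverging values $u(R,t)$, and which stabilize as $t\to\infty$ to a positive stationary profile. Letting that boundary level tend to infinity will force $u(r,t)\to\infty$ for every $r>R$. Note that radiality of $u$ is used here precisely so that the hypothesis $u(R,t)\to\infty$ furnishes a boundary value on the whole sphere, not just at one point.

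Fix an outer radius $R_2>R$ and work on the annulus $A=\{R<|x|<R_2\}$. For a parameter $M>0$ let $\underline v_M$ be the radial solution of the pure diffusion Dirichlet problem
\[
\begin{cases}
(\underline v_M)_t=\Delta \underline v_M^m, & x\in A,\ t>0,\\
\underline v_M=M, & |x|=R,\\
\underline v_M=0, & |x|=R_2,\\
\underline v_M(\cdot,0)=0, & x\in A.
\end{cases}
\]
Since the zero initial datum is a subsolution and the boundary data are time independent, $\underline v_M$ is nondecreasing in $t$ and bounded above by $M$, hence it converges monotonically as $t\to\infty$ to the unique stationary solution $V_M$, which satisfies $\Delta V_M^m=0$ in $A$ with $V_M(R)=M$, $V_M(R_2)=0$. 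Writing $W=V_M^m$ this is harmonic with boundary values $M^m$ and $0$, so $V_M(r)=M\,h(r)^{1/m}$, where $h$ is harmonic in $A$ with $h(R)=1$, $h(R_2)=0$, explicitly $h(r)=\frac{r^{2-N}-R_2^{2-N}}{R^{2-N}-R_2^{2-N}}$ for $N\ge3$ and $h(r)=\frac{\log(R_2/r)}{\log(R_2/R)}$ for $N=2$. In particular $V_M(r)>0$ for $R<r<R_2$ and, for each fixed such $r$, $V_M(r)\to\infty$ as $M\to\infty$.

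Now fix $r_1\in(R,R_2)$ and any $K>0$, and choose $M$ so large that $V_M(r_1)>K$. By hypothesis \eqref{eq.tasah} there is $t_0$ with $u(R,t)\ge M$ for all $t\ge t_0$; since $u$ is radial this holds on the whole sphere $\{|x|=R\}$. Comparing $u$ and $\underline v_M$ on $A\times(t_0,\infty)$---legitimate because $u$ is a supersolution of $\partial_t v=\Delta v^m$, with $u\ge M=\underline v_M$ on $\{|x|=R\}$, $u\ge0=\underline v_M$ on $\{|x|=R_2\}$ and $u(\cdot,t_0)\ge0=\underline v_M(\cdot,0)$---yields $u(\cdot,t+t_0)\ge \underline v_M(\cdot,t)$ in $A$. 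Letting $t\to\infty$ gives $\liminf_{t\to\infty}u(r_1,t)\ge V_M(r_1)>K$. As $K$ is arbitrary, $u(r_1,t)\to\infty$; and since $R_2$ and $r_1\in(R,R_2)$ are arbitrary, the conclusion holds for every $r>R$, while $r=R$ is the hypothesis.

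The only point requiring care is that a single comparison produces merely the finite lower bound $V_M(r_1)$; genuine grow-up comes from iterating the argument with $M\to\infty$ (each step valid from a later time $t_0=t_0(M)$), and this succeeds precisely because the stationary level is homogeneous of degree one in the boundary data, $V_M=M\,h^{1/m}$, so $V_M(r_1)$ is unbounded in $M$. The comparison principle invoked is the standard one for $\partial_t v=\Delta v^m$ with $0<m\le1$ on a bounded domain with positive boundary data, where solutions stay positive and converge to the stationary state, so no restriction such as $m>m^*$ is needed.
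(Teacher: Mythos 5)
Your proof is correct and follows essentially the same strategy as the paper: compare $u$ from below, on an annulus $\{R<|x|<R_2\}$, with the solution of the pure diffusion Dirichlet problem whose inner boundary value is the arbitrarily large level furnished by \eqref{eq.tasah}, let it converge to the positive stationary (harmonic-in-$w^m$) profile, and conclude by arbitrariness of that level. Your version is in fact slightly cleaner on two points the paper glosses over: you take zero initial data so that convergence to the stationary state follows from monotonicity in time, and you write the stationary profile correctly as $M\,h^{1/m}$ including the case $N=2$.
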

\begin{proof} From \eqref{eq.tasah}, given any $K>0$, there exists $t_K$ a time such that $u(R,t)\ge K$ for all $t\ge t_K$. Now, for any $R_1>R$ we consider the problem
$$
\left\{ \begin{array}{ll}
w_t=\Delta w^m\equiv r^{1-N} (r^{N-1}(w^m)_r)_r ,\quad& r\in (R,2 R_1), \;t>t_K,\\
w(R,t)=K , &  t>t_K,\\
w(2 R_1,t)=0 ,&  t>t_K,\\
w(r,t_K)=w_0(r), & r\in(R,2R_1).
\end{array}\right.
$$
Taking $w_0(r)\le \min\{u(r,t_K),K\}$ a continuous function which satisfies the boundary condition we get that $u$ is a supersolution, then $u\ge w$ for $t>t_K$. It is easy to see that any solution to this problem converges as $t\to\infty$ to the explicit stationary solution
$$
h(x)= K \left( \frac{(2R_1)^{N-2}-r^{N-2}}{(2R_1)^{N-2}-R^{N-2}}\right).
$$
Thus taking $t$ large enough we can get $u(R_1,t)\ge cK$, which is as large as we please.
\end{proof}

\begin{teo}\label{teo.global.p<1}
Let $p\le 1$  and assume $0<R<L$ in \eqref{eq.tasah}. Then $\lim\limits_{t\to\infty}u(r,t)=\infty$ for every $r\le R$.
\end{teo}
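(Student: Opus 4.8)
The plan is to compare $u$ inside the ball $B_R$ with the solution of the \emph{pure diffusion} Dirichlet problem, exploiting the fact that the harmonic extension of a constant boundary datum on a ball is that same constant. Since $0<R<L$, the coefficient $a\equiv1$ on $B_R$, so there $u_t=\Delta u^m+u^p\ge\Delta u^m$; that is, $u$ is a supersolution of $w_t=\Delta w^m$ in $B_R$. The reaction term plays no essential role beyond its nonnegativity, and in contrast to the exterior case of Theorem~\ref{teo-0-eps} (where the relevant domain is an annulus and the limit profile is a nonconstant harmonic function) the full ball forces the limit to be constant.

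First I would fix an arbitrary $K>0$. By the grow-up hypothesis \eqref{eq.tasah} there is a time $t_K$ with $u(R,t)\ge K$ for all $t\ge t_K$. Consider then the problem
\begin{equation*}
\left\{\begin{array}{ll}
w_t=\Delta w^m, & |x|<R,\ t>t_K,\\
w(R,t)=K, & t>t_K,\\
w(x,t_K)=w_0(x), & |x|<R,
\end{array}\right.
\end{equation*}
where $w_0$ is a continuous radial function with $w_0(R)=K$, $0\le w_0\le\min\{u(\cdot,t_K),K\}$, chosen so that $w_0^m$ is subharmonic in $B_R$ (for instance a profile supported in a thin layer near $\partial B_R$ that rises convexly to the value $K$; this is possible because $u(\cdot,t_K)$ is continuous and bounded below by a positive constant near $r=R$, where $u(R,t_K)\ge K$). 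On $\partial B_R$ we have $w=K\le u(R,t)$, and at $t=t_K$ we have $w_0\le u(\cdot,t_K)$, so the comparison principle gives $u(\cdot,t)\ge w(\cdot,t)$ in $B_R$ for all $t\ge t_K$.

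Because $w_0$ is a stationary subsolution, $w$ is nondecreasing in time, and since the constant $K$ is a stationary supersolution with the same boundary value we also have $w\le K$. Hence $w(\cdot,t)$ increases to a steady state of $\Delta W^m=0$ in $B_R$ with $W=K$ on $\partial B_R$; as $W^m$ is then harmonic in the ball with constant boundary datum $K^m$, necessarily $W\equiv K$. Therefore $\lim_{t\to\infty}w(r,t)=K$ for every $r\le R$, and consequently $\liminf_{t\to\infty}u(r,t)\ge K$. Since $K>0$ was arbitrary, $\liminf_{t\to\infty}u(r,t)=\infty$, which is the assertion $\lim_{t\to\infty}u(r,t)=\infty$ for every $r\le R$.

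The only genuinely delicate points are the comparison principle and the convergence $w\to K$ in the fast diffusion range $m<1$: one must ensure that the Dirichlet solution with the \emph{positive} boundary datum $K$ stays positive (so no extinction occurs) and that the pure diffusion flow in $B_R$ converges to its unique positive steady state. Both follow from the gradient-flow structure of $w_t=\Delta w^m$ (the functional $\int_{B_R}|\nabla w^m|^2$ is nonincreasing and the only steady state compatible with the boundary data is the constant $K$), and these are the steps I would write out most carefully.
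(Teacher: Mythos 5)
Your proof is correct in substance, but it follows a genuinely different route from the paper's. The paper's own proof exploits the reaction term rather than discarding it: since $B_R\subset B_L$, the spatially flat function $W(t)$ solving $W'=W^p$ with $W(t_K)=\inf_{B_R}u(\cdot,t_K)$ is an exact (space-independent) solution of $w_t=\Delta w^m+a(x)w^p$ in $B_R$, and it stays below the lateral datum $K$ up to the finite time $T$ at which $W(T)=K$; comparison on $B_R\times(t_K,T)$ then yields $u(\cdot,T)\ge K$ in $B_R$, and $u\ge K$ for all $t\ge T$ because the constant $K$ remains a subsolution as long as the boundary datum stays above $K$. This is a two-line argument with no asymptotic theory: the ODE reaches the level $K$ in finite time precisely because $p>0$. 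You instead keep only $u_t\ge\Delta u^m$ in $B_R$ and run the pure-diffusion Dirichlet flow with boundary datum $K$, identifying its monotone limit with the constant steady state; this transplants to the interior ball the strategy the paper uses for the exterior statement, Theorem~\ref{teo-0-eps}, where the annulus flow stabilizes to a nonconstant harmonic profile. What your route buys is generality: the reaction never enters, so your argument shows that any nonnegative supersolution of the pure diffusion equation on a ball which grows up on the bounding sphere grows up everywhere inside, for any $p>0$ and indeed for any nonnegative reaction. What it costs is machinery: the convergence $w\nearrow K$ for degenerate ($m>1$) or singular ($m<1$) diffusion requires stabilization theory for the Dirichlet problem --- interior regularity, uniform-in-time barriers at $|x|=R$ so that the monotone limit attains the boundary value $K$ (no boundary layer), and non-extinction for $m<1$. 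These points are classical, and the paper itself appeals to ``standard arguments'' of exactly this kind elsewhere in Section~\ref{sect-bdd-notbb}, so your proof sits at the same level of rigor; but it is decidedly heavier than the paper's ODE comparison.

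One detail should be repaired. You require simultaneously $w_0(R)=K$, $w_0\le\min\{u(\cdot,t_K),K\}$ and $\Delta w_0^m\ge0$, and justify this by continuity of $u(\cdot,t_K)$; but continuity only gives $u(\cdot,t_K)\ge K-\varepsilon$ near $r=R$, while a profile rising to the exact value $K$ at $r=R$ is close to $K$ there and so need not stay below $u$ (to build one you would have to track the modulus of continuity of $u(\cdot,t_K)$, e.g.\ through a concave majorant). The simplest fix is to drop the corner compatibility altogether: comparison on the parabolic boundary does not require $w_0(R)=K$, and monotonicity in time only needs $w_0$ to be a stationary subsolution with $w_0\le K$ at $|x|=R$. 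Taking $w_0\equiv0$ does everything: $w$ is then nondecreasing in time, bounded above by $K$, below $u$, and its limit is still the constant $K$ because the boundary datum alone drives it up.
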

\begin{proof} As before, for every $K>0$ we consider a time $t_K$ such that $u$ is a supersolution to the problem in the ball
$$
\left\{ \begin{array}{ll}
w_t=\Delta w^m+ w^p, \quad&  0<r<R,\; t>t_K,\\
w(R,t)=K,  &  t>t_K,\\
w(r,t_K)=u(r,t_K) ,& 0<r<R.
\end{array}\right.
$$
Notice that the flat solution, that is, the solution of $W'=W^p$ with initial datum $W(0)=\inf\limits_{x\in B_R(0)} u(\cdot,t_K)$, is a subsolution for $t\in(T_K,T)$, where $W(T)=K$. Therefore by comparison $u(r,T)\ge K$ for $0\le r\le R$.
\end{proof}

We finally consider the case $R=0$ in \eqref{eq.tasah}.

\begin{teo}\label{teo.globalfast}
Let $0<m<p\le 1$, and assume $\lim\limits_{t\to\infty} u(0,t)=\infty$. Then $\lim\limits_{t\to\infty}u(r,t)=\infty$ for every $r>0$.
\end{teo}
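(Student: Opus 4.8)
The plan is to reduce the statement to the propagation lemmas already proved and then to transport the grow-up at the single point $r=0$ out to a positive radius. First I would observe that, by Theorem~\ref{teo-0-eps} (outward propagation) and Theorem~\ref{teo.global.p<1} (inward propagation, valid since $p\le 1$), it suffices to exhibit one radius $R_1\in(0,L)$ with $\lim_{t\to\infty}u(R_1,t)=\infty$: indeed, grow-up at such an $R_1$ forces grow-up on $[R_1,\infty)$ by Theorem~\ref{teo-0-eps} and on $[0,R_1]$ by Theorem~\ref{teo.global.p<1}, so that the grow-up set becomes all of $[0,\infty)$. Thus the whole content of the theorem is the bootstrap from $r=0$ to some $r=R_1>0$.

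For that bootstrap I would use the self-similar subsolutions constructed in Theorem~\ref{cor.ss.pme}. Since $a\ge 0$, the solution $u$ is a supersolution of the pure diffusion equation $u_t=\Delta u^m$, and the fast diffusion range gives infinite speed of propagation, so $u(\cdot,t)>0$ everywhere for $t>0$ with a Barenblatt-type lower tail of order $|x|^{-2/(1-m)}$. The type~II profiles $U_{\alpha,\mu,\mathrm{II}}$ are radially decreasing, increasing in time, subsolutions of \eqref{eq.principal} for $x\neq 0$, and, by the far-field behaviour \eqref{eq.f-infty}, they satisfy $U_{\alpha,\mu,\mathrm{II}}(R_1,t)\to\infty$ at every fixed radius $R_1$. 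The idea is to fit one of them below $u(\cdot,t_0)$ at a large time $t_0$: the peak near the origin is dominated because $u(0,t_0)\to\infty$, while the decaying tail of $U_{\alpha,\mu,\mathrm{II}}$ can be pushed below the tail of $u$ by taking the scaling parameter $\mu$ small, both tails being of the same order $|x|^{-2/(1-m)}$. Once $U_{\alpha,\mu,\mathrm{II}}(\cdot,t_0)\le u(\cdot,t_0)$, the comparison principle gives $u\ge U_{\alpha,\mu,\mathrm{II}}$ for all $t\ge t_0$, and evaluating at $r=R_1$ yields the desired grow-up.

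The hard part will be making this comparison valid \emph{near the origin}. When $N=2$ the profile is flat at $r=0$ (see \eqref{eq.f-0}), so the fit at the peak is immediate and only the tail requires the choice of $\mu$. When $N\ge 3$, however, the self-similar profile is singular, $f(\xi)\sim\xi^{-(N-2)/m}$, whereas $u(\cdot,t_0)$ is finite at the origin, so $U_{\alpha,\mu,\mathrm{II}}$ cannot be placed below $u$ on a full neighbourhood of $r=0$. I would handle this by running the comparison on the exterior region $\{|x|\ge\varepsilon\}$ and letting $\varepsilon\to 0$: the inner boundary inequality $u(\varepsilon,t)\ge U_{\alpha,\mu,\mathrm{II}}(\varepsilon,t)$ on a fixed time window has to be supplied from the grow-up at the origin together with the spreading (lower Harnack) effect of fast diffusion in the not-too-fast range $m>m^*$, equivalently by first showing that $u(0,t)\to\infty$ forces the singular lower bound $u(r,t)\gtrsim r^{-(N-2)/m}$ near the origin for large $t$. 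This step — converting the pointwise grow-up at the single point $r=0$ into a lower bound on a set of positive radius, i.e. excluding an \emph{infinitely thin spike} — is the genuine obstacle, and it is precisely the strong regularizing and spreading character of fast diffusion that resolves it.
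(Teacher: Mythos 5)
Your reduction is sound as far as it goes: via Theorems~\ref{teo-0-eps} and~\ref{teo.global.p<1} it indeed suffices to produce one radius $R_1>0$ at which $u$ grows up, and you have correctly located the crux, namely converting grow-up at the single point $r=0$ into a lower bound on a set of positive radius. The gap is that your proposal never closes this step: you name the obstacle (``excluding an infinitely thin spike'') and then assert that the ``spreading character of fast diffusion'' resolves it, which is precisely the statement to be proved. The supersolution property $u_t\ge\Delta u^m$ by itself cannot do this --- for supersolutions a point value controls nothing from below, since thin spikes are compatible with being a supersolution --- so you would need a genuine intrinsic Harnack inequality for the full equation $u_t=\Delta u^m+a(x)u^p$; that is heavy machinery, nowhere available in the paper, and valid only for $m>(N-2)_+/N$. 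Since the theorem is stated for every $0<m<p\le1$, your argument leaves the whole range $N\ge3$, $0<m\le (N-2)/N$ untouched; in that range the self-similar solutions of Theorem~\ref{cor.ss.pme} that you want to compare with do not even exist (their construction requires $m>m^*$). The paper closes the crux by an entirely different, elementary device: intersection comparison (Sturmian theory) with the stationary Dirichlet solutions of Section~\ref{sect-special}. Because $p>m$, for $R$ small the stationary solution $w$ on $B_R$ has a large central value and meets $u(\cdot,0)$ exactly once; no intersection can enter through $r=R$, where $u>0=w$, and at the first time $t_0$ with $u(0,t_0)=w(0)$ and $u_t(0,t_0)>0$ one computes $\Delta\bigl(u^m-w^m\bigr)(0,t_0)=u_t(0,t_0)>0$, so $z=u^m(\cdot,t_0)-w^m$ satisfies $z(0)=z'(0)=0$, $z''(0)>0$ and the last intersection disappears through the origin. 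Hence $u\ge w$ on $[0,R]$ for $t\ge t_0$, and the separated-variables subsolution from the proof of Theorem~\ref{teo.m-menor-1} then yields grow-up in $B_R$, for all $0<m<p\le1$ and with no Harnack input.

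Two further points would break your argument even where $m>m^*$. First, the tail matching for type~II solutions fails: by \eqref{eq.f-infty} with $\delta=0$ the profile decays like $\xi^{-2/(1-m)}(\log\xi)^{1/(1-m)}$, and the scaling $f_\mu(\xi)=\mu^{2/(1-m)}f(\mu\xi)$ leaves the power intact and only shifts the argument of the logarithm; hence the type~II tail exceeds $C|x|^{-2/(1-m)}$ for every constant $C$, and no choice of $\mu$ places it below a solution whose tail at time $t_0$ is of true order $|x|^{-2/(1-m)}$ or smaller (e.g.\ compactly supported data). This is exactly why Theorem~\ref{teo.tasas.fuera} for $p=1$ must \emph{assume} the log-corrected tail on $u_0$; type~I solutions, which vanish away from the origin as $t\to t_0^+$, would avoid this difficulty, as in the proof of that theorem. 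Second, even for $N=2$ the fit ``at the peak'' is not immediate: largeness of $u(0,t_0)$ gives, by continuity, largeness only on a ball of uncontrolled (possibly shrinking in $t_0$) radius, while the flat core of $U_{\alpha,\mu,II}(\cdot,s_0)$ has a definite spatial extent tied to $\mu$ and $s_0$, which are already constrained by the tail requirement; so the thin-spike problem is present in all dimensions, not only $N\ge3$.
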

\begin{proof}
We use the Intersection Comparison technique with respect to the family
of positive radial stationary solutions
$$
\left\{\begin{array}{ll}
\Delta w^m+w^p=0, \qquad & 0<r<R.\\
w(R)=0,\;
(w^m)'(0)=0,
\end{array}\right.
$$
which are constructed in Subsection \ref{sect-stationary}. That is, we study the number of sign changes between $w$ and $u$ in $[0,R]$,
$$
N(t)=N(w(\cdot,t),u(\cdot,t)).
$$
The main result of the Intersection Comparison argument asserts that the number of
sign changes between two solutions of a large class of nonlinear parabolic equations,
which includes our equation, does not increase in time provided no new intersections appear through the boundary, cf. \cite{Galaktionov},  \cite{Matano}.

First, since $p>m$ there exists $R_0$ small enough such that for $R\le R_0$ the initial intersection number is $N(0)=1$. Also, as $u(R,t)>0$ no new intersection can appear through $r=R$. Now, at $r=0$ we define $t_0$ and $R<R_0$ such that $u(0,t_0)=w(0)$ with $u_t(0,t_0)>0$ (notice that $t_0$ and $R$ exist because $u(0,t)\to \infty$) then
$$
\begin{array}{rl}
0<u_t(0,t_0)&\displaystyle=\Delta u^m(0,t_0)-\Delta w^m(0,t_0)+u^p(0,t_0)-w^p(0,t_0) \\ [3mm]
&\displaystyle=\Delta( u^m(0,t_0)-\Delta w^m(0,t_0)).
\end{array}
$$
 This implies that the function $z(r)=u^m(r,t_0)-w^m(r)$ satisfies $z(0)=z'(0)=0$, $z''(0)>0$, so that $z(r)>0$  for $r\in(0,\delta)$ and some $\delta>0$. So not only no new intersections appear through $r=0$ but an intersection is lost at that point at time $t=t_0$, i.e. $N(t_0)=0$. By intersection comparison this implies  $N(t)=0$ for $t\ge t_0$.

Therefore $u(r,t_1)> w(r)$ for some $t_1>t_0$. Applying the same argument given in the proof of Theorem \ref{teo.m-menor-1} we get that $u$ grows up in $B_R(0)$. Then, by Theorem \ref{teo-0-eps} $u$ has global grow-up.
\end{proof}

\begin{proof}[Proof of Theorem~\ref{teo-sets}]
As we have said, if $p<m$ the global grow-up follows from Remark \ref{rem-pmenorm}.
In the case $p=m$ we first use Theorem \ref{teo-bup-p=m} to get that the solutions is unbounded at every point in some ball, and the by Theorem~\ref{teo-0-eps} the same holds in  every ball.
Finally if $m<p\le 1$ the result follows by applying Theorems \ref{teo-0-eps}, \ref{teo.global.p<1} and \ref{teo.globalfast}.
\end{proof}

\section{Grow-up rate}\label{sect-guprate}\setcounter{equation}{0}

This section is devoted to  study  the speed at which the global unbounded solutions to problem~\eqref{eq.principal} tend to infinity.

An easy upper estimate of the grow-up rate for $0<p\le1$ is given by the solutions of the ODE
$$
U'(t)=U^p(t).
$$
This gives,
\begin{equation}
  \label{flat}
  u(x,t)\le\begin{cases}  (M^{1-p}+(1-p)t)^{\frac1{1-p}},& \quad \text{if } p<1, \\
  Me^t,& \quad \text{if } p=1,
  \end{cases}
\end{equation}
where $M=\|u_0\|_\infty$. We have named this bound the natural rate.

We see next that these estimates are far from being sharp in some cases, and indeed, in the cases when the solution grows up with the natural rate, it does so only in the ball $B_L$, where the reaction applies.

\subsection{Linear diffusion, sublinear reaction, $m=1$, $p<1$}

\

We prove here Theorem \ref{teo-rates-p<1} by means of  Duhamel's formula,
\begin{equation}\label{duhamel}
u(x,t)=\int_{\mathbb{R}^N}u_0(y)\Gamma(x-y,t)+ \int_0^t\int_{|y|\le L} u^p(y,s)\Gamma(x-y,t-s)\,dy\,ds,
\end{equation}
where $\Gamma$ is the Gauss kernel, and we consider every dimension $N\ge1$ for completeness. The formal proof is as follows: if $u(x,t)\sim g(t)$ in $B_L$ for $t\ge1$, then
$$
\begin{array}{rl}
g(t)&\displaystyle\sim\int_1^t\int_{|y|\le L} g^p(s)\Gamma(x-y,t-s)\,dy\,ds \sim\int_1^t g^p(s)
\int_0^{\frac{L^2}{4(t-s)}} r^{\frac{N-2}2}e^{-r}\,dr\,ds\\ [3mm]
&\displaystyle\sim\int_1^t g^p(s)s^{-\frac N2}\,ds.
\end{array}
$$
Now, the solution of the resulting differential equation $g'(t)\sim g^p(t)t^{-\frac N2}$ is
$$
g^{1-p}(t)\sim\begin{cases} 1+t^{\frac{2-N}2},& \text{ if } N\ne2, \\ \log t,& \text{ if } N=2.\end{cases}
$$
The case $N=1$ was obtained in \cite{FerreiradePablo}. In the case $N\ge3$ this also explains why the solution must be bounded.

We then proceed with the detailed proof

\begin{proof}[Proof of Theorem \ref{teo-rates-p<1}]
Assume $u(x,t)\le g(t)$ for every $|x|\le L$ and some increasing function $g$. Since the first term in \eqref{duhamel} is bounded, we have
$$
\begin{array}{rl}
u(x,t)&\displaystyle\le 2\int_1^t\int_{|x-y|\le 2L} g^p(s)(4\pi(t-s))^{-1}e^{-\frac{|x-y|^2}{4(t-s)}}\,dy\,ds\\ [3mm]
&\displaystyle
\le c\int_1^t g^p(s) \int_0^{\frac{L^2}{t-s}} e^{-r}\,dr\,ds\le  c g^p(t)\int_1^t(1-e^{-\frac{L^2}{t-s}})\,ds\\ [4mm]
&\displaystyle\le cL^2 g^p(t)\log t.
\end{array}
$$
In the last step we have used L'H\^opital's rule,
$$
\lim_{t\to\infty}\frac{\displaystyle\int_1^t (1-e^{-\frac{L^2}{t-s}})\,ds}{\log t}=\lim_{t\to\infty}\frac{\displaystyle\int_{\frac{L^2}{t-1}}^\infty (1-e^{-z})z^{-2}\,dz}{\log t}=L^2.
$$
We iterate this estimate starting with $g_1(t)=\nu t^{\frac1{1-p}}$, see~\eqref{flat}.
We obtain in this way the sequence
$$
g_k(t)= c_kt^{\delta_k}(\log t)^{\sigma_k}, \qquad \delta_{k+1}=p\delta_k,\quad \sigma_{k+1}=p\sigma_k+1,\quad c_{k+1}=cL^2c_k^p.
$$
We end with the limits,
$$\lim_{k\to\infty}\delta_k=0,\quad\lim_{k\to\infty}\sigma_k=\dfrac1{1-p},\quad\lim_{k\to\infty}c_k= (cL^2)^{\frac1{1-p}}.$$

As to the lower estimate, it is clear first that $u(x,t)\ge C_0>0$ in $B_L$  for $t>0$, since we can put below a small stationary subsolution. Assume that  we have $u(x,t)\ge g(t)$ for every $|x|<L$, $t>0$. Then the above Duhamel's formula gives
$$
u(x,t)
\ge\displaystyle c \int_1^t g^p(s)
\int_0^{\frac{(L-|x|)^2}{4(t-s)}} e^{-r}\,dr\,ds=\int_1^t g(s)(1-e^{-\frac{L^2}{t-s}})\,ds.
$$
We now observe that
for every $q\ge0$
$$
\int_1^t (\log s)^q(1-e^{-\frac{c}{t-s}})\,ds\ge c(\log t)^{q+1},
$$
to get, again by iteration,
$$
u(x,t)\ge c(L-|x|)^{\frac1{1-p}}(\log t)^{\frac1{1-p}}.
$$

We have just proved, for instance for $|x|<L/2$, that $u(x,t)\ge h(t)=c(\log t)^{\frac1{1-p}}$. We now extend this estimate to every compact of $\mathbb{R}^2$.

To this purpose we use that $u$ is a supersolution to the problem
$$
\left\{
\begin{array}{ll}
w_t=\Delta w,\qquad & |x|>L/2,\; t>t_0,\\
w(x,t)=h(t), &|x|=L/2,\; t>t_0,\\
w(x,t_0)=u(x,t_0), & |x|>L/2,
\end{array}\right.
$$
and a subsolution can be found explicitely. In fact, for $R>L$ fixed the function
$\underline w(x,t)=h(t)\varphi(|x|)^\gamma$, where $\varphi= (1-|x|/R)_+$ do the job provided that both $t_0$ and $\gamma$ are large enough.
Indeed,
$$
\underline w_t-\Delta\underline w=h\varphi^{\gamma-2}\left(\frac{h'}{h}\varphi^2+
\frac{\gamma}{R} \Big(\frac{1}{r}-\frac{\gamma}{R}\Big)\right).
$$
Since $h'/h\to 0$ as $t\to\infty$ we can take $t_0$ such that $\varphi^2 h'/h\le 1$ for all $t\ge t_0$. Then for  $\gamma$ large enough $\underline w_t-\Delta\underline w\le 0$.

Moreover, $\varphi(r)<(1-L/R)_+<1$, then $\underline w(x,t)<h(t)$ and
taking $\gamma$ large enough $\underline w(x,t_0)< u(x,t_0)$.

This means that given any $x_0\in\mathbb{R}^2$ we can define $R=2|x_0|>L$ to have
$$
u(x_0,t)\ge \underline u(x_0,t)=h(t) 2^{-\gamma}.
$$

\end{proof}

\subsection{Linear diffusion, linear reaction, $m=p=1$}

\

In this case we also have that the presence of a localized reaction provokes a growth of the solutions that is strictly slower than that of the solutions with global reaction, that is, we prove that the solutions behave for large times like an exponential, but the exponent depends on the length $L$ and is strictly less than 1.
We  use the explicit radial global unbounded solutions obtained in Subsection \ref{expo-linear} in order  to establish an estimate of the growth of general solutions.

\begin{proof}[Proof of Theorem \ref{teo-rates-N>1}]
Let $u$ be a solution to problem~\eqref{eq.principal} where $L>L^*$. By comparison from above and below with the solutions obtained in Theorem~\ref{prop-lambda0} with different values of $\lambda$, we get that there exists a  function $c(s)$, decreasing with $\lim\limits_{s\to0}c_1(s)=0$, such that
$$
 c(\varepsilon)e^{(\lambda_0-\varepsilon)t}\le u(x,t)\le c^{-1}(\varepsilon) e^{(\lambda_0+\varepsilon)t},
$$
for every  $x\in\mathbb{R}^N$, $t\ge1$, $\varepsilon>0$. We conclude \eqref{rates-p=1}.
\end{proof}

\subsection{The critical line $m=p<1$}

\

In this parameter, we show that the natural grow-up rates \eqref{flat}  are sharp by proving the  lower bound.

\begin{lema}\label{lem-tasa-p=m}
Let $p=m<1$ and let $u$ be a global unbounded solution of \eqref{eq.principal}. Then
$$
u(\cdot,t)\ge ct^{\frac1{1-m}}
$$
uniformly in compact sets of $\mathbb{R}^N$.
\end{lema}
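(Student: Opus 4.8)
The plan is to combine the pointwise lower bound already available in Remark~\ref{rem.tasaBR} with a comparison, in the exterior of a small ball, against the self-similar solutions of type~I produced in Theorem~\ref{cor.ss.pme}. The guiding principle is the rescaling $v(\xi,\tau)=t^{-\frac1{1-m}}u(\xi,t)$, $\tau=\log t$ (with $\beta=0$, so $\xi=x$), under which the rate $t^{\frac1{1-m}}$ corresponds to stabilization of $v$ to a nontrivial positive profile.

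First I would record the inner estimate and upgrade it from a point to a ball. Keeping the Dirichlet subsolution $w$ of \eqref{eq.dirichlet3} (so that $u\ge w$ after a time shift), the concavity inequality \eqref{eq.J'} together with the monotonicity of $w$ gives $w(0,t)\ge c_0t^{\frac1{1-m}}$, as in Remark~\ref{rem.tasaBR}. Passing to the rescaled unknown $v=t^{-\frac1{1-m}}w$ in the comparison ball, $v$ solves the autonomous problem
\[
v_\tau=\Delta v^m+\mathds 1_{B_L}v^m-\tfrac1{1-m}v
\]
with zero boundary data. The natural bound \eqref{flat} shows $\{v(\cdot,\tau)\}$ stays bounded, while \eqref{eq.J'} forces $\int v^{m+1}\ge c>0$; together with the Lyapunov structure \eqref{eq.lyapunov}--\eqref{eq.lyapunov-decrece} this makes the $\omega$-limit set consist of nontrivial positive stationary profiles $V$. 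Hence $u(x,t)\ge w(x,t)\ge c_1t^{\frac1{1-m}}$ for $|x|\le R_0$ and $t\ge t_0$, with $c_1>0$.

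Next I would propagate the bound to the whole space. For $|x|>R_0$ the reaction term of \eqref{eq.principal} is nonnegative, so $u$ is a supersolution of $\partial_tu=\Delta u^m$ there, while a type~I self-similar solution $U=U_{\alpha,\mu,\mathrm I}$ of Theorem~\ref{cor.ss.pme} is an exact solution of $\partial_tU=\Delta U^m$ away from the origin and is radially decreasing. By \eqref{eq.f-infty} (with $\delta=1$, cf.\ \eqref{eq.delta}) and the identity $\alpha-\frac{2\beta}{1-m}=\frac1{1-m}$, one gets $U(x,t)\sim C\,t^{\frac1{1-m}}|x|^{-\frac2{1-m}}$ for each fixed $x\neq0$. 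I would choose $t_0$ and the parameters so that $U\le u$ at $t=t_0$ on $\{|x|\ge R_0\}$ and on the interface $|x|=R_0$ for all $t\ge t_0$; since $U\to0$ as $|x|\to\infty$, the comparison principle on the exterior domain yields $u\ge U$. Then, for a compact $K\subset B_\rho$, monotonicity of $U$ gives $u(x,t)\ge U(\rho,t)\ge c(K)\,t^{\frac1{1-m}}$ on $K$, which together with the inner bound proves the lemma.

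The hard part is the matching at the interface: dividing $U(R_0,t)\le u(R_0,t)$ by $t^{\frac1{1-m}}$ reduces it asymptotically to $C\,R_0^{-\frac2{1-m}}\le c_1$, a relation that the scaling parameter $\mu$ does \emph{not} affect, since all members of the family share the same tail constant $C$. Reconciling this tail constant with the inner constant $c_1$ is therefore the crux. I would resolve it by enlarging the comparison ball for $w$ (permissible because for $p=m$, i.e.\ $\gamma=1$, the Dirichlet problem of Theorem~\ref{estacionarias-Dirichlet} admits a stationary solution only for a single radius, so grow-up persists in arbitrarily large balls) and by taking $R_0$ far from $B_L$, where both the inner profile $V$ and the self-similar profile obey the same equation $\Delta(\cdot)^m=\frac1{1-m}(\cdot)$ and hence carry the same $|x|^{-\frac2{1-m}}$ tail law. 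This common tail structure, available precisely in the not-too-fast range $m>m_*$, is what renders the two constants compatible.
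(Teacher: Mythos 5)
Your route is genuinely different from the paper's. The paper proves Lemma~\ref{lem-tasa-p=m} in one stroke, comparing $u$ with exact separated-variables solutions $w(x,t)=\psi_\lambda(t)\varphi_\lambda(|x|)$, where $\psi_\lambda'=\lambda\psi_\lambda^m$ and $\varphi_\lambda$ solves $(\varphi^m)''+\frac{N-1}r(\varphi^m)'+a(r)\varphi^m-\lambda\varphi=0$, $\varphi(0)=1$; for $L>L^*$ there is a critical $\lambda^*$ such that the radius $R_\lambda$ where $\varphi_\lambda$ vanishes tends to $\infty$ as $\lambda\to\lambda^*$, so a single comparison gives the rate $t^{1/(1-m)}$ on an arbitrarily large ball, for every $0<m<1$. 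Your inner step (Remark~\ref{rem.tasaBR} upgraded to a ball via the rescaled autonomous equation and its Lyapunov functional) is sound in spirit and mirrors arguments the paper itself uses in Theorem~\ref{teo-bup-p=m}, although it leans on unstated compactness and regularity facts for the orbit, and on the strong maximum principle to exclude limit profiles vanishing inside $B_{R_0}$.

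The outer step, however, has two genuine gaps. First, a range restriction: your comparison tool, the type I self-similar solutions of Theorem~\ref{cor.ss.pme}, exists only for $m_*<m<1$, $m_*=\frac{(N-2)_+}N$, while the lemma is stated --- and is not vacuous --- for every $0<m<1$: when $N\ge3$ and $m\le m_*$ there are still global unbounded solutions whenever $L>L^*$ (Theorem~\ref{teo-bup-p=m}), and your proof says nothing about them; the paper's separated-variables argument needs no such restriction. Second, the interface matching that you yourself identify as the crux is not closed. Your ``common tail structure'' observation is correct as an asymptotic statement: since $\delta=1$ forces $\alpha-\frac{2\beta}{1-m}=\frac1{1-m}$, along the tail $f\sim\xi^{-2/(1-m)}$ the drift term $\beta\xi f'$ behaves like $-\frac{2\beta}{1-m}f$, so the profile equation \eqref{eq.perfil} degenerates into the same limit equation $\Delta(\cdot)^m=\frac1{1-m}(\cdot)$ that governs the stationary tails. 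But this yields at best asymptotic \emph{equality} of the two constants, whereas the comparison principle needs the strict inequality $U(R_0,t)\le u(R_0,t)$ with some room; neither enlarging the inner ball nor pushing $R_0$ outward manufactures that room, and, as you note, $\mu$ cannot either. The missing observation is elementary and is exactly what the paper does in the proof of Theorem~\ref{teo.tasas.fuera}: since $U$ is nondecreasing in time, $\underline w=AU$ with $0<A<1$ satisfies $\underline w_t-\Delta\underline w^m=(A-A^m)U_t\le0$, i.e.\ it is a subsolution of the pure diffusion equation, and taking $A$ small enough makes both the interface and the initial comparisons trivial. With that trick your final paragraph becomes unnecessary; without it, the argument as written does not close.
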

\begin{proof}
We consider a solution in separated variables, $w(x,t)=\psi(t)\varphi(|x|)$, where $\psi=\psi_\lambda$ satisfies $\psi'=\lambda\psi^m$, and $\varphi=\varphi_\lambda$ is a solution to
$$
\left\{
\begin{array}{ll}
( \varphi^m)''+\frac{N-1}r(\varphi^m)'+a(r)\varphi^m-\lambda \varphi=0, \quad & r>0,\\
\varphi(0)=1,\ (\varphi^m)'(0)=0.
\end{array}\right.
$$
It is easy to check that if $L>L^*$ there exists a limit value $\lambda^*>0$ such that for every $0<\lambda<\lambda^*$ the solution $\varphi$ is positive and decreasing in $[0,R_\lambda)$, with $\varphi(R_\lambda)=0$, and $\lim_{\lambda\to\lambda^*}R_\lambda=\infty$.
In fact, by the results in Section~\ref{sect-stationary} we know that $\varphi$ crosses the axis at some point if $\lambda=0$, so by continuous dependence with respect to $\lambda$ the same holds  when $\lambda$ is small.
On the other hand, the solution corresponding to $\lambda=1$ satisfies  $\varphi(r)=1$ in $0<r<L$ and it increases to infinity for $r>1$. The existence of $\lambda^*$ is now standard.

Let now $x_0\in\mathbb{R}^N$ be any point. We take $\lambda\sim\lambda^*$ so that $R_\lambda>|x_0|$. Comparison in $[0,R_\lambda]$ gives the grow-up rate in the ball $B_{|x_0|}$.
\end{proof}

\subsection{The supercritical case $m< p\le 1$}

\

As in the previous case we show here that the grow-up rate of our solutions is the natural one, but only inside the ball $B_L$, where the reaction takes place.

\begin{lema}\label{lem.bola}
Let $m<p\le1$, and also $p<p_S$ if $N\ge3$. Let $u$ be a solution of \eqref{eq.principal} with global grow-up. Then for every $|x|<L$ it holds
$$
u(x,t)\ge c \left\{
\begin{array}{ll}
t^{\frac1{1-p}},\quad &\text{if}\quad p<1,\\
e^t, & \text{if}\quad p=1.
\end{array}\right.
$$
\end{lema}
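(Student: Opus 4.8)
Let $m<p\le1$, and also $p<p_S$ if $N\ge3$. Let $u$ be a solution of \eqref{eq.principal} with global grow-up. Then for every $|x|<L$ it holds
$$
u(x,t)\ge c\left\{\begin{array}{ll} t^{\frac1{1-p}}, & \text{if } p<1, \\ e^t, & \text{if } p=1.\end{array}\right.
$$

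The plan is to show that inside $B_L$ the solution is bounded below by the flat solution of the reaction ODE $W'=W^p$, which provides exactly these two rates. By the global grow-up hypothesis together with Theorem~\ref{teo-sets} (applicable since $m<p\le1$, with the radial and pointwise-limit assumptions already built into the grow-up framework of Section~\ref{sect-gupset}), we know $\lim_{t\to\infty}u(r,t)=\infty$ uniformly on compact sets; in particular $u$ tends to infinity at every point of $\overline{B_L}$. The first step is therefore to fix an arbitrary radius $R\in(0,L)$ and observe that on $\{|x|\le R\}$ the reaction coefficient $a(x)\equiv1$, so $u$ restricted to this ball solves $u_t=\Delta u^m+u^p$.

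Next I would set up a Dirichlet comparison problem on $B_R$, exactly as in the proof of Theorem~\ref{teo.global.p<1}. Given any $K>0$, grow-up provides a time $t_K$ with $u(r,t_K)\ge K$ on $\overline{B_R}$ and, more to the point, $u(R,t)\ge$ (any prescribed increasing boundary value) for $t\ge t_K$. The key observation is that a flat function $w(t)=W(t)$, solving $W'=W^p$ with initial value $W(t_K)=\inf_{|x|\le R}u(x,t_K)$, is a \emph{subsolution} of the problem in $B_R$: since $\Delta w^m=0$ for a constant-in-space $w$, we have $w_t-\Delta w^m-w^p=W'-W^p=0$, and on the parabolic boundary $u$ dominates $w$ as long as the boundary data $u(R,t)$ stay above $W(t)$. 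Comparison then yields $u(x,t)\ge W(t)$ for $|x|\le R$.

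The main obstacle — and the step requiring care — is ensuring the boundary inequality $u(R,t)\ge W(t)$ persists for all $t$, not just initially, so that the flat subsolution is not overtaken at $\partial B_R$. This is handled by a bootstrap on the constant $K$: because $u(R,t)\to\infty$ while $W(t)$ grows at the fixed natural rate, one can successively raise the starting level and restart the comparison on overlapping time intervals, or equivalently choose $W$ to track a level strictly below $u(R,\cdot)$; the asymptotic rate of $W$ is unchanged under such shifts, since $(M^{1-p}+(1-p)t)^{1/(1-p)}\sim ct^{1/(1-p)}$ for $p<1$ and $Me^t\sim ce^t$ for $p=1$, matching \eqref{flat}. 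Finally, since the argument holds for every $R<L$ and the resulting constant $c$ depends only on $R$ (hence can be taken uniform on any compact subset of $B_L$), I would conclude the stated lower bound for every $|x|<L$, with the hypothesis $p<p_S$ guaranteeing via Theorem~\ref{teo-sets} that the global grow-up indeed reaches the interior so that the comparison can be initiated.
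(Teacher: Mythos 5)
Your reduction to a flat subsolution on a ball $B_R\subset B_L$ has a genuine gap at exactly the step you flag as ``the main obstacle'': the lateral boundary inequality $u(R,t)\ge W(t)$ for \emph{all} $t$ is essentially the statement being proved, and the bootstrap you propose cannot supply it. Grow-up only gives $u(R,t)\to\infty$; it gives no rate, and in this paper unbounded solutions can grow much more slowly than the natural rate (e.g.\ Theorem~\ref{teo-rates-p<1} gives a $(\log t)^{1/(1-p)}$ rate, and for $p=m=1$ the rate is $e^{\lambda_0 t}$ with $\lambda_0<1$). Concretely, in each comparison window the flat solution $W$ is only known to stay below the boundary data while $W(t)\le K$, where $K$ is the currently known lower bound for $u(R,\cdot)$; the comparison must stop when $W$ reaches $K$ (this is exactly how the flat subsolution is used in Theorem~\ref{teo.global.p<1}, and why that theorem yields divergence but no rate). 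Restarting with larger $K$ produces a lower bound whose growth is dictated by the unknown function $K\mapsto t_K$, i.e.\ by how fast $u(R,\cdot)$ itself grows, not by the natural rate of $W$. So the argument is circular: to keep $W$ below $u$ on $\partial B_R$ forever you must already know $u(R,t)\ge c\,t^{1/(1-p)}$ (resp.\ $c\,e^t$).

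The paper's proof avoids the lateral boundary entirely by using a subsolution in \emph{separated variables}, $\underline z(x,t)=\varphi(x)\psi(t)$ on $B_L$, where $\varphi$ is a positive stationary solution of the Dirichlet problem $\Delta\varphi^m+\varphi^p=0$ in $B_L$, $\varphi=0$ on $\partial B_L$ (Section~\ref{sect-stationary}), and $\psi'=(\psi^p-\psi^m)/\varphi^{1-p}(0)$, $\psi(0)>1$, as in the proof of Theorem~\ref{teo.m-menor-1}. Since $\underline z$ vanishes on $\partial B_L$, the boundary comparison is trivial ($u\ge 0$), and grow-up is used only once, to find a finite time $t_1$ with $u(\cdot,t_1)\ge\underline z(\cdot,0)$ on $\overline{B_L}$; comparison then gives $u(x,t)\ge\varphi(x)\psi(t-t_1)$ for all $t\ge t_1$, and $\psi'\sim\psi^p$ yields the stated rates at every $|x|<L$. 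Note also that this is where the hypothesis $p<p_S$ enters: the Dirichlet stationary profile $\varphi$ exists for $N\ge3$ only below $p_S$ (Theorem~\ref{estacionarias-Dirichlet}); it has nothing to do with Theorem~\ref{teo-sets}, contrary to your closing remark. Indeed your flat-subsolution argument never uses $p<p_S$ at all, which is itself a warning sign: if it were correct the lemma would hold for all $p\le 1$, whereas the paper points out after the lemma that for $p\ge p_S$ one only gets the weaker rate $t^{1/(1-q)}$ with $q<p_S$.
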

\begin{proof}
We compare with the subsolution in separated variables $\underline z$ given in the proof of Theorem \ref{teo.m-menor-1} with $R=L$. Let $t_1>0$ be such that $u(x,t_1)\ge\underline z(x,0)$ for $|x|<L$. We obtain  $u(x,t)\ge \phi(x)\psi(t-t_1)$, and conclude with the behaviour of $\psi'\sim\psi^p$ as $t\to\infty$.
\end{proof}
 In the case $p\ge p_S$ we only can compare with a subsolution satisfying $\psi'\sim\psi^q$, $q<p_S$, thus obtaining $t^{\frac1{1-q}}$ as grow-up rate, which is presumed not to be sharp.

The next task is to obtain the grow-up rate outside the ball $B_L$. We show that there the rate is strictly smaller. To that purpose we consider the self-similar solutions constructed in Section \ref{sect-special}.
\begin{proof}[Proof of Theorem \ref{teo.tasas.fuera}] Let us consider first the case $p<1$.
By Lemma~\ref{lem.bola} we know that there exits a constant $C_1>0$ and a time $t_0>0$ such that $u(x,t)\ge C_1 t^{\frac1{1-p}}$ for $|x|=L/2$, $t\ge t_0$, so $u$ is a supersolution to the problem
$$
\left\{
\begin{array}{ll}
w_t=\Delta w^m, \qquad &|x|>L/2,\; t>t_0,\\
w(x,t)=C_1 t^{\frac1{1-p}}, &|x|=L/2,\; t>t_0,\\
w(x,t_0)=u_0(x).
\end{array}\right.
$$
Put $\underline w(x,t)=AU(|x|,t-t_0)$, where $U=U_{\frac1{1-p},1,I}$ is the selfsimlar solution of type I given in Theorem \ref{cor.ss.pme} with $\mu=1$ and $\alpha=1/(1-p)$, which implies $\beta=\alpha(p-m)/2$.
Since $\underline w_t\ge 0$ we get
$$
\underline w_t-\Delta \underline w^m=(A-A^m)\underline w_t\le 0
$$
provided $A<1$. Let us look at the initial time $t=t_0$. If $N=2$ the profile $f$ defining $U$ is bounded, which implies $\underline w(x,t_0)\equiv0$. When $N\ge3$, using the fact that $p< p_S$ implies that $\alpha-\beta(N-2)/m>0$,  the behaviour near the origin of $f$, see \eqref{eq.f-0}, gives us
$$
\underline w(x,t_0)\sim A (t-t_0)^{\alpha-\beta\frac{N-2}m}|x|^{-\frac{N-2}m} \to 0 \quad \text{for } |x|\ge L/2,\;\text{as } t\to t_0.
$$
On the other hand, we note that for $|x|=L/2$ it holds
$$
\lim_{t\to t_0}\frac{(t-t_0)^\alpha f_*((t-t_0)^\beta L/2)}{t^\alpha}=0=
\lim_{t\to \infty}\frac{(t-t_0)^\alpha f_*((t-t_0)^\beta L/2)}{t^\alpha}.
$$
Then there exists $A>0$ small such that
$$
\underline w(x,t)=A (t-t_0)^\alpha f_*(\frac{L}2(t-t_0)^\beta)<C_1 t^{\alpha} \qquad \text{for } |x|=L/2,\; t\ge t_0.
$$
Then by comparison $u\ge \underline w$, and thus for $t$ large and $|x|>L$ it holds, using \eqref{eq.f-infty},
$$
u(x,t)\ge \underline w(x,t)\sim |x|^{\frac{-2}{1-m}} t^{\frac{1}{1-m}}.
$$

In order to obtain the upper estimate, we observe that from \eqref{flat} we have $u(L,t)\le C_2 (t+1)^{\frac1{1-p}}$ for $t\ge 0$. Then, $u$ is a subsolution of
$$
\left\{
\begin{array}{ll}
w_t=\Delta w^m, \qquad &|x|>L,\; t>0,\\
w(x,t)=C_2(t+1)^{\frac1{1-p}},&|x|=L,\; t>0,\\
w(x,t_0)\ge u(x,t_0).
\end{array}\right.
$$
Here we consider the function $\overline w(x,t)=AU(|x|-L,t+1)$, where $U=\widetilde U_{\frac1{1-p},1,I}$ is the one dimensional self-similar solution  (that is, $ U_t=( U^m)_{xx}$, $x>0$), with the same exponents as before. First observe that  since the profile satisfies $f'\le0$, it is a supersolution to our multidimensional equation. Using now  the behaviour at infinity of $u_0$ it is easy to see that for $A$ large enough $\overline w$ is a supersolution to the above problem. Then by comparison we get, for $t$ large and $|x|>L$,
$$
u(x,t)\le \overline w(x,t)\sim |x|^{\frac{-2}{1-m}} t^{\frac{1}{1-m}}.
$$

The case $p=1$ follows in a similar way using this time comparison with a self-similar solution $U=U_{1,1,II}$, that is a solution of type II with $\alpha=1$, $\beta=(1-m)/2$. We  only have to take into account that:

i) By Lemma \ref{lem.bola} we have $u\sim e^t$ for $|x|< L$ and $t>t_0$. On the other hand, $u$ is a supersolution of the fast diffusion equation, so $u(x,t)\sim 1 $ for $|x|< L$ and $t\in[0,t_0]$. Therefore, $u\sim e^t$ for $|x|< L$ and $t>0$. Thus, we can repeat the same argument as before with $t_0=0$.

ii) The comparison at time $t=0$ follows thanks to the behaviour imposed to $u_0$.

Observe finally that for $t\to\infty$ we have, thanks to \eqref{eq.f-infty},
$$
U(x,t)=e^tf(|x|e^{\frac{m-1}2\,t})\sim |x|^{-\frac2{1-m}}(\log|x|)^{\frac1{1-m}}\,t^{\frac1{1-m}}.
$$
\end{proof}

\section*{Acknowledgments}

Work supported by the Spanish project  MTM2014-53037-P.

\end{document}